\numberwithin{equation}{section}
\theoremstyle{plain}
\newtheorem{theorem}{Theorem}[section]
\newtheorem{proposition}[theorem]{Proposition}
\newtheorem{lemma}[theorem]{Lemma}
\newtheorem{corollary}[theorem]{Corollary}
\theoremstyle{definition}
\newtheorem{definition}[theorem]{Definition}
\theoremstyle{remark}
\newtheorem{example}[theorem]{Example}
\begin{document}
\title[Nonarchimedean Lyapunov exponents of polynomials]{Nonarchimedean Lyapunov exponents of polynomials}
\author{Hongming Nie}
\address{Institute for Mathematical sciences, Stony Brook University}
\email{hongming.nie@stonybrook.edu}

\maketitle

\begin{abstract}
Let $K$ be an algebraically closed and complete nonarchimedean field with characteristic $0$ and let $f\in K[z]$ be a polynomial of degree $d\ge 2$. We study the Lyapunov exponent $L(f,\mu)$ of $f$ with respect to an $f$-invariant and ergodic Radon probability measure $\mu$  on the Berkovich Julia set of $f$ and the lower Lyapunov exponent $L_f^{-}(f(c))$ of $f$ at a critical value $f(c)$. Under an integrability assumption, we show $L(f,\mu)$ has a lower bound only depending on $d$ and $K$. In particular, if $f$ is tame and has no wandering nonclassical Julia points, then $L(f,\mu)$ is nonnegative; moreover, if in addition $f$ possesses a unique Julia critical point $c_0$, we show $L_f^{-}(f(c_0))$ is also nonnegative.

%For a separable polynomial of degree $d\ge 2$ defined over an algebraically closed and complete non-archimedean field $K$, we show its Lyapunov exponent with respect to any invariant Radon measure on its Berkovich Julia set has a lower bound which only depends on $d$ and $K$. Moreover, for a tame polynomial with no wandering non-classical Julia points, if it possesses a unique Julia critical point and no wandering non-classical Julia points, we prove that the (lower) Lyapunov exponent at the unique Julia critical value is  nonnegative.
%Suppose $K$ is an algebraically closed and complete non-archimedean field containing a discretely valued field whose algebraic closure is dense in $K$. Then for any tame polynomial in $K[z]$ of degree at least $2$ having a unique Julia critical point, we prove that the lower Lyapunov exponent at the Julia critical value is non-negative.
\end{abstract}

%\tableofcontents
%\marginpar{\kmp{just for editing}}

\section{Introduction}\label{sec:intro}

Let $K$ be an algebraically closed field with characteristic $0$  that is complete with respect to a nonarchimedean and nontrivial absolute value $|\cdot|$. %and suppose that there is a discretely valued field whose algebraic closure is dense in $\mathbb{C}_v$. The $p$-adic field $\mathbb{C}_p$ and the the completion $\mathbb{L}$ of the the Puiseux series over $\mathbb{C}$ provide paradigms of such field. 
Consider the residue characteristic $\mathrm{res. \ char}(K)$ of $K$ and set 
\begin{equation}\label{q}
p=p_K:=
\begin{cases}
 \mathrm{res. \ char}(K)\ \ &\text{if}\  \mathrm{res. \ char}(K)>0,\\
e\ \ &\text{if}\  \mathrm{res. \ char}(K)=0.
\end{cases}
\end{equation}

%For ease notation, we simply write $\log_v x=\log_{q_v}x$.
Let  $\phi\in K(z)$ be a rational map of degree $d\ge 2$ and denote by $\phi^\#$ the chordal derivative of $\phi$ on the projective space $\mathbb{P}^1:=\mathbb{P}^1_{K}$ with respect to the chordal metric. Both $\phi$ and $\phi^\#$ extend to the Berkovich space $\mathsf{P}^1:=\mathsf{P}^1_K$ over $K$ by continuity, and hence to the Berkovich hyperbolic space $\mathsf{H}^1:=\mathsf{P}^1\setminus\mathbb{P}^1$. For each $\xi\in\mathsf{P}^1$, the \emph{lower Lyapunov exponent} $L^-_\phi(\xi)$ of $\phi$ at $\xi$ is defined as
\begin{equation}\label{L-}
L^-_\phi(\xi):=\liminf_{n\to\infty}\frac{1}{n}\sum_{k=0}^{n-1}\log_p \phi^\#(\phi^k(\xi)),
\end{equation}
and \emph{pointwise Lyapunov exponent} $L_\phi(\xi)$ of $\phi$ at $\xi$ (if exists) is defined as 
\begin{equation*}\label{L}
L_\phi(\xi):=\lim_{n\to\infty}\frac{1}{n}\sum_{k=0}^{n-1}\log_p \phi^\#(\phi^k(\xi)),
\end{equation*}

Let $\nu$ be a $\phi$-invariant Radon probability measure on the (Berkovich) Julia set $J(\phi)$ of $\phi$ such that $\log_p\phi^\#$ is $\nu$-integrable. Then the \emph{Lyapunov exponent} of  $\phi$ with respect to $\nu$ is defined as
\begin{equation}\label{L}
L(\phi,\nu):=\int_{\mathsf{P}^1}\log_p\phi^\#d\nu.
\end{equation}
If in addition that $\nu$ is ergodic, the Birkhoff ergodic theorem implies that  $L_\phi(\xi)$ exists and coincides to $L(\phi,\nu)$ for $\nu$-almost all points $\xi\in\mathsf{P}^1$.
%We say $\phi$ is \emph{inseparable} if every point in $\mathbb{P}^1$ is a critical point; otherwise, we say $\phi$ is \emph{separable}. In the latter case, $\phi$ has $2d-2$ critical points in $\mathbb{P}^1$, see \cite[??]{Benedetto19}. 

In this paper, we mainly consider the Lyapunov exponents for polynomials, and in particular, we study the lower Lyapunov exponents at critical values.

%\subsection{Statement of main results}

% The study of $\chi(\phi)$ already appears in literatures. In the case that $K$ has zero characteristic, for the quantitative approximations to $\chi(\phi)$, we refer \cite{Okuyama12, Okuyama15, Jacobs18}, and for a lower bound of $\chi(\phi)$, we refer \cite{Jacobs19}. For $\xi\in\mathsf{P}^1$, the lower Lyapunov exponent of $\phi$ at $\xi$ is defined by 
%$$\chi_{-}(\phi,\xi):=\liminf_{n\to\infty}\frac{1}{n}\sum_{k=0}^{n-1}\log_{q_K} [\phi^\#]_{\phi^k(\xi)}.$$
%For $\mu_\phi$-almost all points $\xi\in\mathsf{P}^1$, the Birkhoff ergodic theorem implies that  $\chi_{-}(\phi,\xi)=\chi(\phi)$.
%In this paper, we study the (lower) Lyapunov exponents at critical values for polynomials. 

\subsection{Statement of main results}\label{sec:results}
%Let $\phi:\mathsf{P}^1\to\mathsf{P}^1$ be the endomorphism extended from rational map $\phi\in K[z]$ of degree at least $2$. 

%The \emph{ramification locus} $\mathcal{R}_\phi$ of $\phi$ is the set of points $\xi$ in $\mathsf{P}^1$ with local degree $\deg_{\xi}\phi$ at least $2$, for the geometry and topology of $\mathcal{R}_\phi$, see \cite{Faber13I} and \cite{Faber13II}.  We say $\phi$ is \emph{tame} if $\mathcal{R}_\phi$ is locally finite. %Moreover, we say $\phi$ is \emph{simple} if $J(\phi)$ is a singleton; otherwise, say $\phi$ is \emph{nonsimple}.  
%Denoting by $J_I(\phi):=J(\phi)\cap\mathbb{P}^1$ the \emph{classical} Julia set of $\phi$, we have that if $\phi$ is simple, then $J_I(\phi)=\emptyset$ \cite{??}. %We mention here that there exists nonsimple  and tame rational maps with empty classical Julia set, see e.g. the Lett\'ax maps, \cite{?}
%Following Trucco's characterization of $J(\phi)$, if $\phi$ is tame and nonsimple, we conclude that the classical Julia set $J_I(\phi):=J(\phi)\cap K$ is nonempty. 
For any $n\ge 1$, set 
$$\kappa_n:=\min\{\log_p|\ell|:1\le\ell\le n\}$$
 and note that $\kappa_n\le 0$. For a polynomial $f\in K[z]$ of degree $d\ge 2$, we set 
 $$\kappa(f):=\min\{\log_p|\deg_\xi f|:\xi\in\mathsf{P}^1\},$$ 
 then $\kappa_d\le\kappa(f)\le 0$. Moreover, we also denote by $J_I(f):=J(f)\cap K$ the \emph{classical Julia set} of $f$.

For polynomials, we have the following lower bound for the Lyapunov exponent. %with respect to any invariant Radon probability measure has a lower bound. 
For $\xi\in\mathsf{H}^1$, denote by $\mathrm{diam}(\xi)$ the diameter of $\xi$, see Section \ref{sec:berk}.

\begin{theorem}\label{thm:bound}
Let $f\in K[z]$ be a polynomial of degree $d\ge 2$ and let $\mu$ be an $f$-invariant Radon probability measure on $J(f)$. Then there exists a constant $C(f)>0$ such that the following hold.
\begin{enumerate}
\item If $\mu(J_I(f))>0$, then for $\mu$-almost point $\xi$ in $J_I(f)$,
$$\kappa(f)\le L_f(\xi)\le C(f)\mu(J_I(f));$$  
in particular, $\log_pf^\#$ is $\mu$-integrable on $J_I(f)$.
\item If $\mu(J(f)\cap\mathsf{H}^1)>0$ and $\log_p\mathrm{diam}(\cdot)$ is $\mu$-integrable on $J(f)\cap\mathsf{H}^1$, then 
$$\kappa(f)\le \int_{J(f)\cap\mathsf{H}^1}\log_pf^\#d\mu\le C(f)\mu(J(f)\cap\mathsf{H}^1).$$
\end{enumerate}
 Moreover, if in addition $\mu$ is ergodic and either $\mu(J_I(f))=1$ or  $\log_p\mathrm{diam}(\cdot)$ is $\mu$-integrable on $J(f)\cap\mathsf{H}^1$, 
$$L(f,\mu)\ge\kappa(f)\ge\kappa_d.$$
\end{theorem}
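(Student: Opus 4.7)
The plan is to handle parts (1) and (2) as the two substantive statements and then deduce the final ``moreover'' clause from them by invoking ergodicity. For part (1), I would analyze $f^\#$ at classical points directly: at $c\in K$ with local degree $\ell=\deg_c f$, the factorization $f(z)-f(c)=a(z-c)^\ell+\cdots$ combined with the chordal metric yields a pointwise bound
$$\log_p f^\#(c)\ge \log_p|\deg_c f|+R(c),$$
where $R$ depends only on the position of $c$ and on the coefficients of $f$ (and is bounded above). Since $\log_p|\deg_\xi f|\ge\kappa(f)$ uniformly, averaging along an orbit via Birkhoff's ergodic theorem and noting that the telescoping ``position'' errors do not affect the Ces\`aro limit delivers $L_f(\xi)\ge\kappa(f)$ for $\mu$-a.e.\ $\xi\in J_I(f)$. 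The upper bound $L_f(\xi)\le C(f)\mu(J_I(f))$ follows because $\log_p f^\#$ is continuous on the compact space $\mathsf{P}^1$, hence bounded above, and the average is taken over a set of measure $\mu(J_I(f))$; in particular this integrability is automatic.

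For part (2), the plan is to exploit the Berkovich cocycle that relates the chordal derivative at a hyperbolic point to how $f$ acts on diameters. For $\xi\in\mathsf{H}^1$ one has an identity of the shape
$$\log_p f^\#(\xi)=\log_p\mathrm{diam}(f(\xi))-\deg_\xi(f)\log_p\mathrm{diam}(\xi)+\eta(\xi),$$
where $\eta$ is a bounded ``chordal correction'' depending only on $f$ (arising from converting between the spherical/chordal scale and the diameter scale). Given the $\mu$-integrability of $\log_p\mathrm{diam}$ on $J(f)\cap\mathsf{H}^1$ and the $f$-invariance of $\mu$, the first two terms combine into
$$\int(1-\deg_\xi f)\log_p\mathrm{diam}(\xi)\,d\mu(\xi),$$
which is bounded above by $C(f)\mu(J(f)\cap\mathsf{H}^1)$ after applying the pointwise inequality $\log_p|\deg_\xi f|\ge\kappa(f)$ and absorbing $\eta$ into $C(f)$. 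The lower bound $\kappa(f)$ is then immediate from the pointwise estimate $\log_p f^\#(\xi)\ge\kappa(f)+(\text{integrable coboundary})$ followed by integration, since the coboundary integrates to zero against an invariant measure.

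For the ``moreover'' clause, ergodicity of $\mu$ together with the $f$-invariance of the Borel sets $J_I(f)$ and $J(f)\cap\mathsf{H}^1$ forces exactly one of them to carry full $\mu$-mass. In the classical case, Birkhoff's theorem equates $L(f,\mu)$ with $L_f(\xi)$ for $\mu$-a.e.\ $\xi$, and part (1) then yields $L(f,\mu)\ge\kappa(f)$. In the hyperbolic case, part (2) gives this same inequality directly as an integral estimate. Finally, $\kappa(f)\ge\kappa_d$ follows because $\deg_\xi f$ is an integer in $\{1,\dots,d\}$ for every $\xi\in\mathsf{P}^1$, so $\log_p|\deg_\xi f|\in\{\log_p|\ell|:1\le\ell\le d\}$, whose minimum is $\kappa_d$ by definition.

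The hardest step is Step 2, specifically making the cocycle and the boundedness of the correction $\eta$ rigorous at \emph{all} points of $\mathsf{H}^1$ (not just Type II points, where the disk representation is explicit) and handling the cancellation of $\log_p\mathrm{diam}$ under $f$-invariance. This is precisely where the integrability hypothesis on $\log_p\mathrm{diam}$ enters in an essential way, since without it the telescoping cannot legitimately be applied. I expect the earlier tools of the paper on the structure of $f^\#$ on $\mathsf{P}^1$ (in particular its behavior under the reduction map at Type II points) to carry the proof through.
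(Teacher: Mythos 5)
The plan for part (2) is in the right ballpark — the paper does indeed use a diameter cocycle and the $f$-invariance of $\mu$ to telescope — but your proposal for part (1) hinges on a pointwise bound that is simply false, and that false bound is precisely where the real difficulty of the theorem lives.

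Concerning part (1): you claim a bound of the form $\log_p f^\#(c)\ge \log_p|\deg_c f|+R(c)$ with $R$ bounded. But at a \emph{generic} classical point $c\in K$ the local degree $\deg_c f$ equals $1$, so $\log_p|\deg_c f|=0$, whereas $|f'(c)|\to 0$ (hence $\log_p f^\#(c)\to -\infty$) as $c$ tends to a classical critical point of $f$. There is no bounded correction that can rescue the inequality, and Birkhoff cannot absorb a pointwise quantity that is unbounded below. The whole content of part (1) is to handle exactly this unbounded-below behavior without a pointwise lower bound. The paper's argument (following Przytycki's complex-dynamics proof) instead works with the finite average $\frac{1}{n}\log_p|(f^n)'(\xi)|$ on a good Birkhoff set $E(T_0)$, splits orbit segments according to whether $f^n(\xi)$ falls into an exponentially shrinking neighborhood $D_n(c)$ of a Julia critical point, and deploys a derivative estimate (Proposition~\ref{lem:kappa}, resting on Lemma~\ref{lem:crit-value} which bounds the number of critical values in $D_n(c)$) to control the derivative loss over those segments. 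This is a genuinely global, orbit-level argument; a pointwise inequality of the type you propose cannot substitute for it.

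Concerning part (2): your cocycle
$\log_p f^\#(\xi)=\log_p\mathrm{diam}(f(\xi))-\deg_\xi(f)\log_p\mathrm{diam}(\xi)+\eta(\xi)$
carries an extraneous factor of $\deg_\xi f$. The correct relation (Proposition~\ref{prop:distortion} of the paper, essentially $||f'||_\xi\ge p^{\kappa}\,\mathrm{diam}(f(\xi))/\mathrm{diam}(\xi)$, with equality and $\kappa=0$ in the tame case) has coefficient $1$ on $\log_p\mathrm{diam}(\xi)$, so that after integrating against the invariant measure the diameter terms cancel \emph{exactly}, leaving $\kappa$. With your formula the residual $\int(1-\deg_\xi f)\log_p\mathrm{diam}(\xi)\,d\mu$ survives; it has no fixed sign, it is not controlled by $\kappa(f)$, and the inequality $\log_p|\deg_\xi f|\ge\kappa(f)$ you invoke has nothing to do with it. Once the cocycle is corrected the rest of your part-(2) outline matches the paper's proof, and the ``moreover'' deduction from ergodicity and the invariance of $J_I(f)$ and $J(f)\cap\mathsf{H}^1$ is fine — but only after parts (1) and (2) are established by correct means.
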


 A dynamically meaningful invariant  and ergodic measure is the equilibrium measure $\mu_f$ which satisfies $f^\ast\mu_f=d\cdot\mu_f$, see \cite[Theorem 0.1]{Favre04}. Jacobs have proven that   $L(f,\mu_f)\ge\kappa_d$, see \cite[Theorem 1]{Jacobs19}. In fact, by adding an extra term involving with Lipschitz constants, Jacobs has shown a lower bound in the case of rational maps; for polynomials, we can omit this extra term by the work of Rumely-Winburn \cite[Theorem 0.3]{Rumely15} on Lipschitz constants. Since either $\mu_f(J_I(f))=1$ or $\mathrm{supp}(\mu_f)\subset\mathsf{H}^1$ (and hence $\log_p\mathrm{diam}(\cdot)$ is $\mu_f$-integrable on $J(f)$), our Theorem \ref{thm:bound} extends Jacobs' result on the equilibrium measure in the polynomial case.

There are two main advantages in our argument for considering polynomials rather than general rational maps: first, the derivative of a polynomial is bounded in any fixed disk in $K$, specially in a disk intersecting classical Julia set if such Julia set is nonempty; and second, under any polynomial, the iterated preimages of any disk are all disks. Although we do not have an explicit example in hand, due to the noncompactness of the classical Julia sets in general setting, it seems improbable to achieve the bounded derivatives for arbitrary rational maps. For the preimages part, under a rational map, an open disk may have preimages which are not disks, see \cite[Section 3.6]{Benedetto19}. Such advantages play a key role: the bounded derivative can bound the number of critical values of the iterations in a certain neighborhood of a critical point, while the disk preimages relate the derivatives and the ratio of diameters of certain disks. See Section \ref{sec:derivative}  for details.

\medskip

The \emph{ramification locus} $\mathcal{R}_f$ of $f$ is the set of points $\xi$ in $\mathsf{P}^1$ with local degree $\deg_{\xi}f$ at least $2$; for the geometry and topology of $\mathcal{R}_f$, see \cite{Faber13I} and \cite{Faber13II}.  We say $f$ is \emph{tame} if $\mathcal{R}_f$ is locally finite, in this case $p$ does not divide the local degree $\deg_\xi f$ at any $\xi\in\mathsf{P}^1$; otherwise, we say $f$ is \emph{nontame}. We can upgrade Theorem \ref{thm:main} as following for tame polynomials. 
\begin{corollary}\label{cor:bound}
Let $f\in K[z]$ be a tame polynomial of degree at least $2$ and let $\mu$ be an $f$-invariant Radon probability measure on $J(f)$. Then the following hold.
\begin{enumerate}
\item If $\mu(J_I(f))>0$, then for $\mu$-almost point $\xi$ in $J_I(f)$,
$$L_f(\xi)\ge 0.$$  
\item If $\mu(J(f)\cap\mathsf{H}^1)>0$ and $\log_p\mathrm{diam}(\cdot)$ is $\mu$-integrable on $J(f)\cap\mathsf{H}^1$, then 
$$\int_{J(f)\cap\mathsf{H}^1}\log_pf^\#d\mu=0.$$
\end{enumerate}
Moreover, if in addition $\mu$ is ergodic and either $\mu(J_I(f))=1$ or  $\log_p\mathrm{diam}(\cdot)$ is $\mu$-integrable on $J(f)\cap\mathsf{H}^1$, 
$$L(f,\mu)\ge 0.$$
\end{corollary}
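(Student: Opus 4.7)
The plan is to deduce Corollary~\ref{cor:bound} from Theorem~\ref{thm:bound} via two observations specific to the tame case. The first observation is that $\kappa(f) = 0$ whenever $f$ is tame: by the remark preceding the statement, tameness means $p \nmid \deg_\xi f$ for every $\xi \in \mathsf{P}^1$, so $|\deg_\xi f| = 1$ and $\log_p |\deg_\xi f| = 0$; taking the minimum over $\xi$ gives $\kappa(f) = 0$. With this in hand, statement (1) of the corollary follows immediately from Theorem~\ref{thm:bound}(1), which yields $L_f(\xi) \ge \kappa(f) = 0$ for $\mu$-almost every $\xi \in J_I(f)$, and the final statement of the corollary follows from the ergodic part of Theorem~\ref{thm:bound}, which gives $L(f,\mu) \ge \kappa(f) = 0$ under either of the stated integrability assumptions.

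For (2), Theorem~\ref{thm:bound}(2) already supplies the lower bound $\int_{J(f) \cap \mathsf{H}^1} \log_p f^\# \, d\mu \ge \kappa(f) = 0$, so the remaining task is the matching upper bound, and in fact equality. I plan to establish a pointwise coboundary identity on $J(f) \cap \mathsf{H}^1$ of the form
\[
\log_p f^\#(\xi) = \log_p \mathrm{diam}(f(\xi)) - \log_p \mathrm{diam}(\xi),
\]
valid for tame $f$. At a Type II point associated to a disk $D(a,r)$ meeting the filled Julia set, the chordal derivative can be evaluated via $|f'|_\xi$ and the radius of the image disk $f(D(a,r))$; tameness guarantees that the local degree $\deg_\xi f$ enters only through a factor of $p$-adic absolute value $1$, so no logarithmic correction appears, and Type III points are handled by continuity. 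Given this identity, the $f$-invariance of $\mu$, together with the integrability of $\log_p \mathrm{diam}(\cdot)$, yields via the change-of-variables $\int g \circ f \, d\mu = \int g \, d(f_\ast\mu) = \int g \, d\mu$ the equality
\[
\int_{J(f) \cap \mathsf{H}^1} \log_p f^\# \, d\mu = \int \log_p \mathrm{diam}(f(\xi)) \, d\mu(\xi) - \int \log_p \mathrm{diam}(\xi) \, d\mu(\xi) = 0.
\]

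The main obstacle is the verification of the cocycle identity above. It requires a careful computation of the chordal derivative at Type II points for a tame polynomial, followed by a check that the chordal-metric normalization (which differs from the $K$-valued absolute value outside the closed unit disk) introduces no residual term on the Julia set. Tameness is precisely what lets one conclude that any factor involving $\deg_\xi f$ has $p$-adic absolute value $1$ and therefore drops out under $\log_p$; once this pointwise identity is in place, the integral equality — and hence part (2) — follows immediately by the change-of-variables argument above, and parts (1) and (3) are already recorded.
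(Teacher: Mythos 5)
Your approach matches the paper's for (1) and the ergodic statement: tameness means $p\nmid\deg_\xi f$ at every $\xi$, hence $|\deg_\xi f|=1$ and $\kappa(f)=0$, so Theorem~\ref{thm:bound} directly yields those parts, exactly as in the paper.

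For (2), the coboundary-and-invariance idea is also the paper's route, but the pointwise identity you write down is not correct as stated, and this is precisely the step you flagged as the ``main obstacle'' without actually resolving it. What the tame case of Proposition~\ref{prop:distortion} gives is $\log_p||f'||_\xi = \log_p\mathrm{diam}(f(\xi)) - \log_p\mathrm{diam}(\xi)$, whereas the chordal derivative $f^\#(\xi)$ carries the extra normalization $\max(1,||z||_\xi)^2/\max(1,||f||_\xi)^2$, which is not identically $1$ on $J(f)\cap\mathsf{H}^1$ for a general polynomial; tameness alone does not make this factor disappear. So $\log_p f^\#$ is not, pointwise, the coboundary of $\log_p\mathrm{diam}$, and your change-of-variables step does not yet apply. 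There are two ways to finish. Either first conjugate $f$ by an affine map sending the base point $\xi_f$ to the Gauss point $\xi_G$; then every $\xi\in J(f)$ satisfies $\xi\preceq\xi_G$, giving $||z||_\xi\le1$ and $||f||_\xi\le1$ on $J(f)$, hence $f^\#(\xi)=||f'||_\xi$ pointwise there, and one checks that the integral in (2) is unchanged under such a conjugation (this is the device behind the Lemma establishing \eqref{equ:L=}, and is essentially what the paper does). Or observe that the chordal factor is itself the coboundary of $\xi\mapsto -2\log_p\max(1,||z||_\xi)$, a continuous and hence bounded function on the compact set $J(f)\subset\mathsf{A}^1$, which therefore also integrates to zero over the completely invariant set $J(f)\cap\mathsf{H}^1$ by $f$-invariance. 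With either fix your argument becomes complete and coincides with the paper's.
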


By a result of Trucco \cite[Section 3]{Trucco14}, the tameness assumption in Corollary \ref{cor:bound} implies that either $J(f)$ is a singleton in $\mathsf{H}^1$ or $J_I(f)$ is nonempty. The upgraded lower bounds in Corollary \ref{cor:bound} apply the observation that $\kappa(f)=0$, and the one in Corollary \ref{cor:bound} (2) also employs an improved distortion between derivatives and ratios of certain disks, see Propositon \ref{prop:distortion}. Obviously  if $f$ has a repelling fixed point in $J_I(f)$ with whole mass under $\mu$, then $L(f,\mu)>0$. For an example with $\mu(J_I(f))=0$,  we can take $f$ with a fixed point in $J(f)\cap\mathsf{H}^1$ and let $\mu$ be the Dirac measure at this fixed point. 

We say a Julia point in $\mathsf{H}^1$ is \emph{wandering} if it is not eventually periodic. For polynomials without wandering Julia points in $\mathsf{H}^1$, the invariance of $\mu$ implies that $\mathrm{supp}(\mu)\cap\mathsf{H}^1$ consists of periodic points, which guarantees the integrality assumption. Thus in this case, we have the following.

\begin{corollary}\label{cor:bound1}
Let $f\in K[z]$ be a tame polynomial of degree at least $2$ with no wandering Julia points in $\mathsf{H}^1$ and let $\mu$ be an $f$-invariant and ergodic Radon probability measure on $J(f)$. Then
$$L(f,\mu)\ge 0.$$
\end{corollary}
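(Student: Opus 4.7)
The plan is to reduce to the last sentence of Corollary \ref{cor:bound}, which already yields $L(f,\mu)\ge 0$ whenever $\mu$ is ergodic and \emph{either} $\mu(J_I(f))=1$ \emph{or} $\log_p\mathrm{diam}(\cdot)$ is $\mu$-integrable on $J(f)\cap\mathsf{H}^1$. The classical Julia set $J_I(f)$ and $J(f)\cap\mathsf{H}^1$ are disjoint, completely $f$-invariant Borel subsets partitioning $J(f)$, so ergodicity of $\mu$ immediately forces $\mu(J_I(f))\in\{0,1\}$. In the case $\mu(J_I(f))=1$ the corollary applies at once, so the remaining task is to establish the integrability clause in the complementary case $\mu(J(f)\cap\mathsf{H}^1)=1$.

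To handle that case, I would first show that $\mu$ is carried by the set $P$ of periodic points in $J(f)\cap\mathsf{H}^1$. The no-wandering hypothesis writes $J(f)\cap\mathsf{H}^1$ as the union $\bigcup_{n\ge 0}f^{-n}(P)$, and this union is increasing because $f(P)\subset P$ gives $P\subset f^{-1}(P)$. Each $f^{-n}(P)$ has the same $\mu$-measure $\mu(P)$ by $f$-invariance, so continuity of measure from below and $\mu(J(f)\cap\mathsf{H}^1)=1$ yield $\mu(P)=1$. Next I would use ergodicity once more: for a single cycle $C\subset P$, the set $f^{-1}(C)\setminus C$ consists only of strictly preperiodic points, which lie in $(J(f)\cap\mathsf{H}^1)\setminus P$, a $\mu$-null set, so $C$ is $f$-invariant modulo $\mu$. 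Ergodicity then forces $\mu(C)\in\{0,1\}$, and since the cycles partition $P$, exactly one cycle $C^\ast$ carries all the mass. Hence $\mu$ is the normalized counting measure $|C^\ast|^{-1}\sum_{\xi\in C^\ast}\delta_\xi$ on the finite set $C^\ast$.

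With $\mu$ concentrated on finitely many points of $\mathsf{H}^1$, each having positive finite diameter, the function $\log_p\mathrm{diam}(\cdot)$ is trivially $\mu$-integrable. The second alternative in the final clause of Corollary \ref{cor:bound} is thus satisfied and we obtain $L(f,\mu)\ge 0$. The only step I expect to require care is the measure-theoretic passage from ``every orbit in $J(f)\cap\mathsf{H}^1$ is eventually periodic'' to ``$\mu$-almost every orbit \emph{is} periodic'', which is exactly where the inclusion $P\subset f^{-1}(P)$ and the ergodic decomposition into cycles intervene; once those are in place, the corollary does the rest without any further dynamical input.
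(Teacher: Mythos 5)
Your proposal is correct and takes essentially the same route as the paper: the paper likewise observes that the no-wandering hypothesis together with $f$-invariance forces $\mu\!\restriction_{J(f)\cap\mathsf{H}^1}$ to be carried by periodic points, so that $\log_p\mathrm{diam}(\cdot)$ is trivially $\mu$-integrable there, and then invokes the final clause of Corollary \ref{cor:bound}. You supply considerably more detail than the one-paragraph proof in the paper — in particular the explicit increasing union $\bigcup_{n\ge 0}f^{-n}(P)$ argument for $\mu(P)=1$ and the ergodicity argument reducing to a single cycle — and these fill in precisely the steps the paper leaves implicit.

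One small remark: the passage from ``each cycle has measure $0$ or $1$'' to ``exactly one cycle $C^\ast$ carries all the mass'' tacitly assumes the cycles composing $P$ form a countable family (so that total measure $1$ cannot be smeared over cycles of measure zero). For tame polynomials with no wandering Julia points this is indeed the case (the repelling cycles in $\mathsf{H}^1$ are finite in number, each meeting the finite ramification tree $\mathcal{R}_f$), and the paper implicitly uses the same fact; but since you flagged the a.e.-periodicity step as the delicate one, it is worth noting that this later step is the one that really needs the finiteness input.
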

For certain fields $K$, there exist tame polynomials in $K[z]$ having wandering Julia points in $\mathsf{H}^1$, see \cite[Theorem A]{Trucco15}. However, if $K$ contains a discretely valued field whose algebraic closure is dense in $K$, then for any tame polynomial in $K[z]$ of degree at least $2$, its Julia points in $\mathsf{H}^1$ coincide with the grand orbits of finitely many periodic orbits \cite[Theorem A]{Trucco14}, and hence $\mathsf{H}^1$ contains no wandering Julia points. The $p$-adic field $\mathbb{C}_p$ and the completion $\mathbb{L}$ of the Puiseux series over $\mathbb{C}$ provide paradigms of such above field $K$. We should mention here that even for such a field $K$, there exist nontame polynomials possessing wandering Julia points, see \cite[Theorem 1.1]{Benedetto02}. For more references about the (non)wandering Julia points, we refer \cite{Benedetto00} and \cite{Rivera05}.

\medskip
 Under the assumptions in Corollary \ref{cor:bound1}, for $\mu$-almost all points $\xi\in J(f)$, the Birkhoff ergodic theorem implies that the lower Lyapunov exponent $L_f(\xi)\ge 0$. It is natural to ask if the critical values of $f$ also have nonnegative (lower) Lyapunov exponents.  In the case of several Julia critical points, there exists a quartic tame polynomial having a Julia critical value whose lower Lyapunov exponent is $-\infty$, see Example \ref{ex:2}. In the case of unique Julia critical point, we obtain the following nonnegativeness property.  %We say a Julia point in $\mathsf{H}^1$ is \emph{wandering} if it is not eventually periodic. 

%Let $f\in K[z]$ be a polynomial of degree at least $2$.  It extends to an endomorphsim on $\mathsf{P}^1$. %For $\xi\in\mathsf{P}^1$, we denote the lower Lyapunov exponent of $f$ at $z\in\mathbb{C}_v$ by 
%$$\chi_{-}(f,z):=\liminf_{n\to\infty}\frac{1}{n}\sum_{k=0}^{n-1}\log_v [f^\#]_{f^k(\xi)}.$$
%The \emph{ramification locus} $\mathcal{R}(f)$ is the set of points $\xi$ in $\mathsf{P}^1$ with local degree $\deg_{\xi}f$ at least $2$. Following Trucco \cite{Trucco14}, we say $f$ is \emph{tame} if $\mathcal{R}(f)$ is a finite tree in $\mathsf{P}^1$. If, in addition, $f$ has a unique critical point $c$ in its Berkovich Julia set $\mathcal{J}(f)$, we prove that there is a certain expansion along the critical orbit $\{f^n(c)\}_{n\ge 1}$, provided an assumption on the field $K$. %To state the result, we denote the lower Lyapunov exponent of $f$ at $z\in\mathbb{C}_v$ by 
%$$\chi_{-}(f,z):=\liminf_{n\to\infty}\frac{1}{n}\sum_{k=0}^{n-1}\log_v f^\#(f^k(z)).$$

\begin{theorem}\label{thm:main}
Let $f\in K[z]$ be a tame polynomial of degree at least $2$ with no wandering Julia points in $\mathsf{H}^1$. If $f$ has a unique critical point $c$ in $J_I(f)$, then there exists $\alpha>0$ such that for all $n\ge 1$, 
$$|(f^n)'(f(c))|\ge p^{-\alpha}.$$
In particular, 
$$L_f^{-}(f(c))\ge 0.$$
\end{theorem}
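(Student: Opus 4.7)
The plan is to establish the uniform derivative bound $|(f^n)'(f(c))|\ge p^{-\alpha}$ for all $n\ge 1$; the Lyapunov conclusion then follows from \eqref{L-} because on the bounded classical Julia set the factor $\max(1,|{\cdot}|)^2/\max(1,|f({\cdot})|)^2$ relating $f^\#$ to $|f'|$ telescopes to a bounded boundary term, giving
\begin{equation*}
\frac{1}{n}\sum_{k=0}^{n-1}\log_p f^\#(f^k(f(c))) = \frac{1}{n}\log_p|(f^n)'(f(c))| + O(1/n) \ge -\alpha/n + o(1) \to 0.
\end{equation*}

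First I would record basic reductions. A periodic classical critical point is super-attracting and hence Fatou, so $c\in J_I(f)$ forces $c$ to be non-periodic and $f^k(c)\ne c$ for all $k\ge 1$. Let $m:=\deg_c f$; by tameness $|m|=1$ in $K$, and $f$ admits the local normal form $f(z)=f(c)+a(z-c)^m(1+O(z-c))$ on a small disk, so $|f'(z)|=|am|\cdot|z-c|^{m-1}$ there and $f(D(c,r))=D(f(c),|a|r^m)$ for $r$ small. The other critical points of $f$ lie in the open Fatou set and stay a uniform positive distance from the postcritical orbit $\{f^k(c)\}_{k\ge 1}\subset J_I(f)$.

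Next I would use the no-wandering hypothesis to produce a well-behaved Berkovich disk around $c$. Choose an $\mathsf{H}^1$-Julia point $\eta_0$ lying on the Berkovich geodesic from $c$ toward infinity, close enough to $c$ that the component $\mathcal{D}_0\subset\mathsf{P}^1$ of $\mathsf{P}^1\setminus\{\eta_0\}$ containing $c$ avoids all other critical points and satisfies $\mathcal{D}_1:=f(\mathcal{D}_0)=D(f(c),|a|R^m)$ with $c\notin\mathcal{D}_1$, where $\mathcal{D}_0\cap K=D(c,R)$. By the no-wandering hypothesis $\eta_0$ is eventually periodic, so the orbit $(f^k(\eta_0))_{k\ge 0}$ takes only finitely many values. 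Setting $\mathcal{D}_k:=f^k(\mathcal{D}_0)$, each is a Berkovich open disk bounded by $f^k(\eta_0)$ and containing the bounded classical point $f^k(c)$, so $\mathrm{diam}(\mathcal{D}_k)=\mathrm{diam}(f^k(\eta_0))$ belongs to a fixed finite set of values, giving $\Delta:=\inf_k\mathrm{diam}(\mathcal{D}_k)>0$.

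Then I would apply Proposition~\ref{prop:distortion} to the polynomial map $f^n\colon\mathcal{D}_1\to\mathcal{D}_{n+1}$. Its degree equals $d_n:=m^{\nu_n}$, where $\nu_n:=\#\{1\le k\le n:c\in\mathcal{D}_k\}$ counts returns of the disk orbit to disks containing the critical point; uniqueness of $c$ in $J_I(f)$ guarantees that only $c$ can drive degree growth. Proposition~\ref{prop:distortion} then expresses $|(f^n)'(f(c))|$ as a constant multiple of $\mathrm{diam}(\mathcal{D}_{n+1})/\mathrm{diam}(\mathcal{D}_1)$, adjusted by the degree-$m$ transitions $\mathrm{diam}(\mathcal{D}_{k+1})=|a|\mathrm{diam}(\mathcal{D}_k)^m$ occurring at the $\nu_n$ returns. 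The periodicity of the diameters forces each return to be followed by a fixed schedule of subsequent diameter transitions within the period, so the shrinkages and expansions cancel within each cycle. Combined with $\mathrm{diam}(\mathcal{D}_{n+1})\ge\Delta$ and $\mathrm{diam}(\mathcal{D}_1)=|a|R^m$, this yields $|(f^n)'(f(c))|\ge p^{-\alpha}$ for some $\alpha>0$ depending on $f$ but independent of $n$.

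The main obstacle is the diameter bookkeeping in the last step. Each return $c\in\mathcal{D}_k$ simultaneously multiplies $d_n$ by $m$ and triggers a potentially large shrinkage $\mathrm{diam}(\mathcal{D}_{k+1})=|a|\mathrm{diam}(\mathcal{D}_k)^m$; showing that the periodic cycle of subsequent expansions precisely compensates these shrinkages is where the unique-Julia-critical-point hypothesis is essential. Example~\ref{ex:2} constructs a tame quartic with two Julia critical points in which the analogous compensation fails and $L_f^-(f(c))=-\infty$, so uniqueness cannot be dropped.
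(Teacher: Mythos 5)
Your high-level strategy --- use the no-wandering hypothesis to produce a disk around $c$ whose iterates have controlled diameters, then convert diameter ratios into derivative bounds --- is in the same spirit as the paper's proof, which does the bookkeeping via the omitted tree $\mathcal{H}_c$ and marked grids. But there are two concrete gaps, one in the setup and one at the heart of the argument.

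\textbf{The point $\eta_0$ does not exist.} You ask for an $\mathsf{H}^1$-Julia point $\eta_0$ on the geodesic $]c,\infty[$. No such point exists. Since $c\in J_I(f)$, the polynomial is nonsimple, and the open segment $]c,\xi_f]$ is entirely made up of edges and vertices of the Trucco tree $\mathcal{T}_f\subset\Omega_\infty(f)$, hence is disjoint from $J(f)$; and $]\xi_f,\infty[\subset\mathsf{B}(\overrightarrow{\xi_f\infty})$ is also Fatou. (Equivalently: if $\eta\in J(f)\cap\mathsf{H}^1$ satisfied $c\prec\eta$, then since the level-$n$ dynamical points are pairwise incomparable, the unique $L_n(\cdot)\in\mathcal{L}_n$ above $\eta$ is also the unique one above $c$, so the dynamical sequences $\{L_n(c)\}$ and $\{L_n(\eta)\}$ coincide, yet their limits $c\ne\eta$ differ --- contradiction.) Without this $\eta_0$ the eventual periodicity of $(f^k(\eta_0))_{k\ge0}$, and hence the uniform floor $\Delta$ on diameters, is simply not available; choosing $\eta_0\in]c,\xi_f[$ in $\mathcal{T}_f$ instead gives $\mathrm{diam}(f^k(\eta_0))\to\infty$, which says nothing about the Julia critical orbit. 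The paper obtains the uniform constant $\alpha$ from an altogether different source, Proposition~\ref{lem:bound}, which uses no-wandering to bound the omitted tree $\mathcal{H}_c\setminus\,]\xi_f,\infty[$ away from $K$.

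\textbf{The cancellation at returns is the theorem, not a byproduct of periodicity.} Even granting a suitable disk orbit $\mathcal{D}_k$, the step where you pass from diameter ratios to the pointwise derivative is unsupported. At a return $c\in\mathcal{D}_k$ the ultrametric disk $\mathcal{D}_k$ is centered at both $c$ and $w_k:=f^k(c)$; the local normal form gives $|f'(w_k)|$ comparable to $|w_k-c|^{m-1}$, while the diameter ratio $\mathrm{diam}(\mathcal{D}_{k+1})/\mathrm{diam}(\mathcal{D}_k)$ is comparable to $\mathrm{diam}(\mathcal{D}_k)^{m-1}$. Their quotient $\left(|w_k-c|/\mathrm{diam}(\mathcal{D}_k)\right)^{m-1}$ can be arbitrarily small when $c$ is deeply recurrent, and it is not controlled by periodicity of $\mathrm{diam}(\mathcal{D}_k)$: the diameters may cycle, but the recurrence depths $|w_k-c|$ do not. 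Relatedly, Proposition~\ref{prop:distortion} relates diameter ratios to the sup-norm $||(f^n)'||$ at an $\mathsf{H}^1$ point, which is only an \emph{upper} bound for $|(f^n)'(f(c))|$; the slack between the two is exactly these return discrepancies. The paper's resolution is the marked-grid inequality \eqref{equ:<}: for the unique Julia critical point, the accumulated length lost at returns near $w_n$ maps injectively back (via the strictly increasing index $j+q\gamma(j)$) into segments of $[\xi_f,G_m(w_1)]$, so the telescoping sum is bounded by what one started with. Your proposal replaces this whole argument with the assertion that ``the shrinkages and expansions cancel within each cycle,'' which is precisely the content that needs proof, and which fails without the uniqueness hypothesis as Example~\ref{ex:2} shows.
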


Corollary \ref{cor:>} states a sufficient condition for $L_f^{-}(f(c))>0$. On the other hand, Example \ref{ex:0} contains a cubic tame polynomial having a unique Julia critical value whose lower Lyapunov exponent is $0$; this example is provided by J. Kiwi.

%It would be interesting to construct polynomials in Theorem \ref{thm:main} with zero lower Lyapunov exponent   at Julia critical values. 
%In Example \ref{ex:0}, we provide a tame cubic polynomial satisfying the assumptions in Theorem \ref{thm:main} and having $L_f^{-}(f(c))=0$.

\medskip
The no wandering Julia points in $\mathsf{H}^1$ in Theorem \ref{thm:main} implies that any component of (Berkovich) Fatou $F(f):=\mathsf{P}^1\setminus J(f)$ is either (pre)periodic or contained in the basin of a type II (repelling) cycle, see Lemma \ref{lem:nonwandering}. To supplement Theorem \ref{thm:main}, for the Fatou critical points, we have

\begin{proposition}\label{prop:main}
Let $f\in K[z]$ be a tame polynomial of degree at least $2$ with no wandering Julia points in $\mathsf{H}^1$.  Suppose that $c\in K$ is a critical point contained in $F(f)$. If $c$ is not in the basin of an attracting cycle of $f$, then there exists $\ell_0\ge 1$ such that for any $\ell\ge\ell_0$,
$$L_f(f^\ell(c))=0.$$
\end{proposition}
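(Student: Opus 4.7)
The plan is to reduce the computation of $L_f(f^\ell(c))$ to that of the multiplier of a type II periodic cycle, which turns out to be trivial in the tame setting. First, by Lemma~\ref{lem:nonwandering}, every Fatou component of $f$ is either (pre)periodic or contained in the basin of a type II repelling periodic cycle. Together with the hypothesis that $c$ is not in the basin of a classical attracting cycle, this forces the forward orbit of $c$ to eventually enter the basin of a type II repelling cycle $\Gamma=\{\xi_1,\ldots,\xi_{k_0}\}\subset J(f)\cap\mathsf{H}^1$. I would choose $\ell_0\ge 1$ so that, for every $\ell\ge\ell_0$, the iterates $f^n(f^\ell(c))$ converge to $\Gamma$ in the Berkovich topology of $\mathsf{P}^1$, cycling through the points of $\Gamma$ in order with period $k_0$.

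Since $\log_p f^\#$ extends continuously to the compact space $\mathsf{P}^1$, a standard Ces\`aro argument applied to the almost-periodic sequence $\{\log_p f^\#(f^{n+\ell}(c))\}_{n\ge 0}$ yields
$$L_f(f^\ell(c))=\lim_{n\to\infty}\frac{1}{n}\sum_{k=0}^{n-1}\log_p f^\#(f^{k+\ell}(c))=\frac{1}{k_0}\sum_{i=1}^{k_0}\log_p f^\#(\xi_i).$$
It therefore suffices to prove that the multiplier $\prod_{i=1}^{k_0}f^\#(\xi_i)$ of the type II cycle $\Gamma$ equals $1$.

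For this, write each $\xi_i$ as the Gauss point of a disk $D(a_i,r_i)\subset K$, so that $r_i=\mathrm{diam}(\xi_i)$ and $r_{i+1}=\mathrm{diam}(f(\xi_i))$. Tameness gives $|\deg_{\xi_i}f|=1$; a Newton-polygon calculation for $f$ restricted to $D(a_i,r_i)$ then provides the tame identity
$$|f'(\xi_i)|=\frac{\mathrm{diam}(f(\xi_i))}{\mathrm{diam}(\xi_i)}=\frac{r_{i+1}}{r_i},$$
where $|\cdot|$ denotes the Berkovich seminorm at $\xi_i$. The chordal normalization factors $\max(|\xi_i|,1)^2/\max(|f(\xi_i)|,1)^2$ telescope cyclically to $1$, and so
$$\prod_{i=1}^{k_0}f^\#(\xi_i)=\prod_{i=1}^{k_0}\frac{r_{i+1}}{r_i}=1,$$
giving $L_f(f^\ell(c))=0$ for every $\ell\ge\ell_0$.

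The main obstacle is the Berkovich-topological convergence assertion in the first step: verifying that the classical orbit $\{f^{n+\ell}(c)\}_{n\ge 0}$ in $K$ actually converges in the Berkovich topology of $\mathsf{P}^1$ to the type II cycle $\Gamma$ (and not merely in the coarser topology of $K$) requires a careful description of the basin of attraction of a type II repelling cycle and the way Fatou components in such a basin nest around the cycle. A secondary technical ingredient is the tame identity $|f'(\xi)|=\mathrm{diam}(f(\xi))/\mathrm{diam}(\xi)$ for type II points with local degree coprime to $p$, which would fail in the presence of wild ramification and is exactly where the tameness hypothesis is used.
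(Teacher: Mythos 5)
Your high-level strategy (reduce $L_f(f^\ell(c))$ to the derivative behavior near a periodic boundary point of a Fatou component) is close in spirit to the paper's, but the route has a genuine gap: you treat only one of the two relevant cases, and the paper's proof is forced to handle the other.

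Under the hypotheses, once attracting basins are ruled out, Lemma~\ref{lem:nonwandering} tells us the forward orbit of $c$ is eventually contained either \emph{(a)} in a wandering-but-not-strictly-wandering Fatou component, i.e.\ in the basin of a type II repelling cycle, or \emph{(b)} in a (pre)periodic \emph{indifferent} Fatou domain. The paper records this dichotomy right before the proposition: $c$ ``lies in either the basin of an indifferent cycle or the basin of a type II repelling cycle.'' Your argument silently assumes case (a) and deduces that $f^{n+\ell}(c)$ converges weakly in $\mathsf{P}^1$ to the repelling type II cycle $\Gamma$. In case (b), however, some iterate $f^m$ maps a Fatou disk bijectively onto itself, the orbit of $f^\ell(c)$ stays inside that disk, and $f^{n+\ell}(c)$ does \emph{not} converge to the boundary cycle. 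Your Ces\`aro limit, which relies on $\log_p f^\#(f^{n+\ell}(c))\to\log_p f^\#(\xi_i)$ along residue classes mod $k_0$, therefore does not apply, and the proof is incomplete.

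In case (a) the rest of your argument is sound: once $\ell_0$ is chosen so that the forward orbit of the Fatou component containing $f^{\ell_0}(c)$ avoids $\mathrm{Crit}(f)$, the components visited lie in eventually pairwise-distinct tangent directions at the cycle, so the type I orbit does converge weakly to $\Gamma$; continuity of $f^\#$ in the Gelfand topology together with positivity of $f^\#$ at type II points gives the Ces\`aro limit; and the telescoping of both the tame identity $||f'||_{\xi_i}=\mathrm{diam}(f(\xi_i))/\mathrm{diam}(\xi_i)$ (Proposition~\ref{prop:distortion}) and the chordal normalization factors (using $||f||_{\xi_i}=||z||_{\xi_{i+1}}$) yields multiplier $1$.

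The paper's proof avoids both the weak-convergence assertion and the case split. It picks $j\ge\ell_0$ so that the Fatou component $\Omega_j$ containing $f^j(c)$ has a periodic boundary point $\xi$ of period $m$; because the forward orbit of $\Omega_{\ell_0}$ meets no critical points, tameness forces each $f^n$ to be one-to-one on $\Omega_j$, and Lemma~\ref{lem:basic}(2) turns $|(f^n)'(f^j(c))|$ into the ratio $\mathrm{diam}(f^n(\Omega_j))/\mathrm{diam}(\Omega_j)$, with both diameters in the finite set $\{\mathrm{diam}(f^i(\xi)):0\le i<m\}$. The resulting uniform two-sided bound $r_{\min}/r_{\max}\le |(f^n)'(f^j(c))|\le r_{\max}/r_{\min}$ gives $L_f=0$ immediately, and it works verbatim whether $\Omega_j$ is an indifferent periodic domain or a wandering component in a repelling basin. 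To repair your proof you would either need to add a (trivial) separate treatment of case (b)---there $f^m$ is a bijection of the disk, so $|(f^{nm})'|=1$---or adopt the paper's diameter-ratio argument, which handles both cases at once and is also more elementary, requiring neither weak-topology convergence nor Proposition~\ref{prop:distortion}.
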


The critical point $c$ in Proposition \ref{prop:main} lies in either the basin of an indifferent cycle or the basin of a type II repelling cycle, see Section \ref{sec:dynamics}. We can choose $\ell_0\ge 1$ such that  the orbit of the Fatou component containing $f^{\ell_0}(c)$ is disjoint with all critical points of $f$.

\subsection{Motivation and background}
For any rational map $\phi\in K(z)$ of degree at least $2$, the equilibrium measure is the unique $\phi$-invariant probability measure $\mu_\phi$ such that $\phi^\ast\mu_\phi=d\cdot\mu_\phi$, see \cite[Theorem 0.1]{Favre04}. Although $\mu_\phi$ may not be the measure of maximal entropy (see \cite[Section 5.2]{Favre10}), as mentioned in Section \ref{sec:results}, the Lyapunov exponent $L(\phi,\mu_\phi)$ already appears in literatures.  Considering the measures converging to $\mu_\phi$, Okuyama \cite[Lemma 3.1]{Okuyama12} and Jacobs \cite[Corollary 1]{Jacobs18} have independently obtained different approximation of $L(\phi,\mu_\phi)$. Applying the multipliers of periodic points, Okuyama has stated quantitative approximations to $L(\phi,\mu_\phi)$, see \cite[Theorem 1]{Okuyama12} and \cite[Theorems 1 and 2]{Okuyama15}. From Okuyama's results and a work of Benedetto, Ingram, Jones and Levy \cite[Theorem 4.1]{Benedetto14}, it is easy to conclude that $L(\phi,\mu_\phi)\ge 0$ if $\phi$ is tame.  In contrast, if $\phi$ is nontame,  $L(\phi,\mu_\phi)$ may be negative; for example, the map $\phi(z)=z^d$ has $L(\phi,\mu_\phi)=\log_p|d|<0$ if $p\mid d$.  Jacobs has showed a lower bound of $L(\phi,\mu_\phi)$, see \cite[Theorem 1]{Jacobs19}. Instead of a single map, for the analytic families of maps in $K[z]$, Favre and Gauthier \cite[Corollary 1]{Favre18}  and Gauthier, Okuyama and Vigny \cite[Theorem B]{Gauthier20} have studied the Lyapunov exponents with respect to the corresponding equilibrium measures.

All the above known results are only related to the equilibrium measure, which have motivated us to investigate the Lyapunov exponent with respect to an arbitrary invariant Radon measure. In general, such exponents are untransparent due to the obscure of the measures. Our Theorem \ref{thm:bound} starts the study of these exponents for polynomials. 

Once the Lyapunov exponent is at our disposal, we can ask about the (lower)  Lyapunov exponents at dynamically meaningful points. Such exponents at the critical values reflect the asymptotic behaviors of derivatives along critical trajectories. For example, if  $K=\mathbb{C}_p$,  Benedetto \cite{Benedetto01} has introduced the non-archimedean hyperbolic rational maps with coefficients in a finite extension of $\mathbb{Q}_p$ acting on $\mathbb{P}^1$ (and hence on $\mathsf{P}^1$). Such rational maps have no critical points in Julia sets \cite[Main Theorem]{Benedetto01}, and hence in the tame case, the corresponding lower Lyapunov exponent at any critical value is nonpositive; indeed, such a map has no wandering domains in $\mathsf{P}^1$ \cite[Theorem 11.14]{Benedetto19}, which implies, according to the classification of Fatou components \cite[Th\'eor\`eme de Classification]{Rivera03},  that any critical value eventually maps to an indifferent domain or an attracting domain.

Trucco has described the dynamics of polynomials with great details in \cite{Trucco14}. Further study of such dynamics appears in \cite{Nie22} and \cite{Nie20}. Although those dynamics are more tractable than that for general rational maps, as aforementioned, polynomials with several Julia critical points may possess negative lower Lyapunov exponents at critical values. It  attracts our attention to the polynomials with a unique Julia critical point.

In complex dynamics,  for any rational map  of degree at least $2$  any invariant probability measure on the Julia set,  Przytycki has proved that the pointwise Lyapunov exponent is nonnegative for almost all point, see  \cite[Theorem A]{Przytycki93}. Then Levin, Przytycki and Shen have showed that for any unicritical polynomial in $\mathbb{C}[z]$, if the unique critical value does not belong to the basin of an attracting cycle, then the lower Lyapunov exponent at the critical value is nonnegative, see \cite[Theorem 1.1]{Levin16}. Our results in this present paper are nonarchimedean counterparts of the aforementioned results in complex dynamics.

\subsection{Brief overview of the proofs}

We proceed proof of Theorem \ref{thm:bound} as follows. 

If $\mu(J_I(f))>0$, it is an adaptation of  the one given by Przytycki in \cite{Przytycki93} for complex rational maps. The main idea is the following. Pick a generic (in certain sense) point $x$ in the classical Julia set $J_I(f)$. If the orbit of $\xi$ doses not intersect certain small shrinking neighborhoods of the critical points, we can bound $L_f(\xi)$ below by considering a small disk containing $\xi$ and its image under $f^n$. If the orbit of $\xi$ intersects those shrinking neighborhoods of the critical points, we estimates the derivatives of some iteration of $f$, see Proposition \ref{lem:kappa}, which also implies a lower bound of $L-f(\xi)$. Then applying a standard argument, we can upgrade to the desired lower bound. The nonarchimedean features involved make the estimates in the proof more accessible. We mention here that in  \cite[Appendix A]{Rivera20} Rivera-Letelier has given a simpler proof of Przytycki's result in which a distortion for the maps is crucial; in our setting, mainly due to the possible noncompactness of $J_I(f)$, we do not know if such distortion holds for $f$ or not. 

If $\mu(J(f)\cap\mathsf{H}^1)>0$, the conclusion follows from a distortion of diameters of points in $\mathsf{H}^1$, see Proposition \ref{prop:distortion}. This idea has been applied in Jacobs' proof of  \cite[Theorem 2]{Jacobs19}. 

In Jacobs' proof of the lower bound of $L(f,\mu_f)$ (see \cite[Theorem 1]{Jacobs19}) with respect to the equilibrium measure $\mu_f$, it employs equidistribution results about $\mu_f$, see \cite[Section 4.3]{Jacobs19}; however, such equidistribution results are lacking in our setting for general invariant and ergodic measures, so we can not exploit Jacobs' proof directly here.

%it is an application of the idea \cite[Theorem]{Jacobs19}. We consider the distortion of $f$ introduced in \cite[Section 3]{Benedetto14} and provide a lower bound of $\log_pf'(\xi)$ for $\xi\in\mathsf{H}^1$. After integration with respect to $\mu$, such a lower bound give the desired lower bound of $L(f,\mu)$.

\smallskip
To prove Theorem \ref{thm:main}, we use several trees to control the map $f^n$ at the unique Julia critical point $c$ of $f$.  One of the trees, called the omitted tree, in this paper concerns the iteration of the ramification locus by deleting parts involving the critical point $c$. Since in our setting the derivatives of $f^n$ are the ratios of diameters of some disks, we track the expansion of such diameters with marked grids. A key point is that, if there is no wandering Julia points in $\mathsf{H}^1$, then the omitted tree is bounded away from $K$, see Proposition \ref{lem:bound}. %, which allows us to apply marked grids along pieces of arcs in $\mathsf{P}^1$. 

\subsection*{Structure of the paper}
Section \ref{sec:poly} covers the basic background on Berkovich space and the dynamics of polynomials. It also contains estimates of derivatives.   Section \ref{sec:tree} states several trees for polynomials and Section \ref{sec:Lya} provides an approximation of the (lower) Lyapunov exponent. Sections \ref{sec:proof1} and \ref{sec:proof2} prove the results in this paper. Section \ref{sec:ex} contains examples for the Lyapunov exponents at critical values. 

\subsection*{Acknowledgements}
 The author  would like to thank Jan Kiwi for fruitful discussions (specially in the time when the author was in Pontificia Universidad Cat\'olica de Chile) and for providing Example \ref{ex:0}. The author would also like to  thank Y\^usuke Okuyama for helpful comments and Feliks Przytycki for mentioning the reference \cite{Rivera20}.

\section{Polynomial dynamics background}\label{sec:poly}

In this section, we discuss some results from polynomial dynamics on $K$ and on the Berkovich space $\mathsf{P}^1$ which will be used in the remainder of this paper. 

\subsection{Berkovich space}\label{sec:berk}
Recall that $K$ is an algebraically closed filed with characteristic $0$ that is complete with respect to a nonarchimedean and nontrivial absolute value $|\cdot |$. %For two points $z=[z_1:z_2]$ and $w=[w_1: w_2]$ in the the projective space $\mathbb{P}^1$ over $K$, the spheric metric 
%$$d_{\mathrm{sph}}(z,w)=\frac{|z_1w_2-z_2w_1|}{\max\{|z_1,z_2|\}\max\{|w_1|,|w_2|\}}.$$
The closed disks and the open disks in $K$ are the sets of from 
 $$\overline{D}(x,r):=\{y\in K:|y-x|\le r\}\ \text{and}\  D(x,r):=\{y\in K:|y-x|< r\}$$
 for some $x\in K$ and $r\ge 0$, respectively. The nonarchimedean property of $|\cdot|$ implies that any point in a disk is a center, that is, $\overline{D}(x,r)$ (w.r.t. $D(x,r)$) coincides with $\overline{D}(y,r)$ (w.r.t. $D(y,r)$) for any $y\in\overline{D}(x,r)$ (w.r.t. $y\in D(x,r)$). The diameter of a disk $\overline{D}(x,r)$ or $D(x,r)$ is
 $$\mathrm{diam}(\overline{D}(x,r))=\mathrm{diam}(D(x,r))=r.$$ 
Moreover, any two disks are either disjoint or one contains the other. 

Associated with the topology induced by $|\cdot|$, the space $K$ is totally disconnected and not locally compact, so is its projective space $\mathbb{P}^1$. To remedy this, it is compactifyed by the Berkovich space $\mathsf{P}^1$ over $K$. For the development of $\mathsf{P}^1$, we refer \cite{Berkovich90}.  Here, we state briefly the description of $\mathsf{P}^1$.

The Berkovich affine space $\mathsf{A}^1$ over $K$ consists of the seminorms on the polynomial ring $K[z]$ which extends $|\cdot |$ in $K$. The points $\xi\in\mathsf{A}^1$  correspond to (cofinal equivalence classes of) nested decreasing sequences of closed disks $\overline{D}(x_i,r_i)\subset K$  for some $x_i\in K$ and $r_i> 0$ via the limit of the supermum seminorm on each disk, that is for $\psi\in K[z]$,
$$||\psi||_\xi=\lim_{i\to\infty}\sup_{z\in\overline{D}(x_i,r_i)}|\psi(z)|.$$ 
The space $\mathsf{A}^1$ possibly contains four types points. The type I points, also known as \emph{classical points}, are the points whose corresponding sequence has singleton intersection;  the type II points are those whose corresponding sequence has a disk intersection with diameter in the valued group $|K^\times|$; the type III points are those whose corresponding sequence has a disk intersection with diameter not in $|K^\times|$; and the type IV points are those whose corresponding sequence has empty intersection. In particular, for a type I, II or III point, the above intersection is the disk in $K$ corresponding to this point. The space $\mathsf{A}^1$ always contains type I points and type II points, but may not contain type III or type IV points, see \cite[Section 4]{Faber13I}. The space $K$ is identified with the set of type I points. In general, for type I, II or III points, we write $\xi_{x,r}$ for a point in $\mathsf{A}^1$, where $x\in K$ is a point in the intersection of the aforementioned disks and $r\ge 0$ is the diameter of the intersection, and denote by $\mathrm{diam}(\xi_{x,r})=r$. We call the type II point $\xi_G:=\xi_{0,1}$  corresponding to the unit disk in $K$ is the \emph{Gauss point}.   Moreover, the inclusion between the disks of $K$ induces a natural partial order $\preceq$ in $\mathsf{A}^1$, which induces a tree structure on $\mathsf{A}^1$. %For $\xi_1,\xi_2\in\mathsf{A}^1$, we say $\xi_1\prec\xi_2$ if $\xi_1\preceq\xi_2$ and $\xi_1\not=\xi_2$. 

Since the polynomials in $K[z]$ map disks to disks in $K$, they can extend to $\mathrm{A}^1$. Now we associate $\mathsf{A}^1$ with the weak topology, also know as the Gelfand topology, in which the map $\xi\to f(\xi)$ on $\mathrm{A}^1$ is continuous for all $f\in K[z]$. Then $\mathrm{A}^1$ is Hausdorff,  locally compact and unique arcwise connected. We also remark here that type I points are dense in $\mathsf{A}^1$, so are the points of type II. 

The Berkovich space $\mathsf{P}^1$ over $K$ can be regarded as the one point compactification of $\mathsf{A}^1$ by adding the unique maximal (type I ) element $\infty$ in the partial order $\preceq$. Then associated with the weak topology, $\mathsf{P}^1$ is a Hausdorff, arcwise connected, compact and nonmetrizable topological space with tree structure. For two points $\xi_1,\xi_2\in\mathsf{P}^1\setminus\{\xi\}$, we say $\xi_1\prec\xi_2$ if $\xi_1\preceq\xi_2$ and $\xi_1\not=\xi_2$.  Denote by $[\xi_1,\xi_2]\subset\mathsf{P}^1$ the unique segment connecting $\xi_1$ and $\xi_2$, and denote by $]\xi_1,\xi_2[:=[\xi_1,\xi_2]\setminus\{\xi_1,\xi_2\}$, $]\xi_1,\xi_2]:=[\xi_1,\xi_2]\setminus\{\xi_1\}$ and $[\xi_1,\xi_2[:=[\xi_1,\xi_2]\setminus\{\xi_2\}$.

At a point $\xi\in\mathsf{P}^1$, for two points $\xi_1, \xi_2\in\mathsf{P}^1$, we say $\xi_1$ is equivalent to $\xi_2$ if $]\xi,\xi_1[\ \cap\ ]\xi,\xi_2[\not=\emptyset$. This induces an equivalent relation on $\mathsf{P}^1\setminus\{\xi\}$. Each equivalence class is a \emph{direction} at $\xi$. The \emph{tangent space} $T_\xi\mathsf{P}^1$ consists of all the directions at $\xi$. We denote by $\overrightarrow{\xi\xi_1}\in T_\xi\mathsf{P}^1$ the direction at $\xi$ corresponding to $\xi_1\not=\xi$. If $\xi$ is of type I or IV, then $T_\xi\mathsf{P}^1$ is a singleton; if $\xi$ is of type II, then $T_\xi\mathsf{P}^1$ is naturally isomorphic to the projective space of the residue field of $K$; and if $\xi$ is of type III, then $T_\xi\mathsf{P}^1$ contains two directions. For $\overrightarrow{\xi\xi_1}\in T_{\xi}\mathsf{P}^1$, denote by $\mathsf{B}(\overrightarrow{\xi\xi_1})$ the component of $\mathsf{P}^1\setminus\{\xi\}$ corresponding to $\overrightarrow{\xi\xi_1}$ which is called a \emph{Berkovich (open) disk}. If $\xi\in\mathsf{A}^1$ and $\infty\not\in\mathsf{B}(\overrightarrow{\xi\xi_1})$, we sometimes write $\mathsf{B}(\overrightarrow{\xi\xi_1})$ by $\mathsf{B}(x,r)$ for any $x\in\mathsf{B}(\overrightarrow{\xi\xi_1})\cap K$ and $r=\mathrm{diam}(\xi)$. Then we in fact have $\mathsf{B}(x,r)\cap K=D(x,r)$. 

The complement $\mathsf{H}^1:=\mathsf{P}^1\setminus\mathbb{P}^1$ is the \emph{(Berkovich) hyperbolic space}. It carries a path distance metric $\rho$ defined as follows. From the tree structure of  $\mathsf{P}^1$, for any two distinct points $\xi_1,\xi_2\in\mathsf{H}^1$, setting $\xi_1\vee\xi_2$ the unique intersection point of the segments $[\xi_1,\infty[$ and $[\xi_2,\infty[$ and  recalling $p$ in \eqref{q}, we define
$$\rho(\xi_1,\xi_2)=2\log_p\mathrm{diam}(\xi_1\vee\xi_2)-\log_p\mathrm{diam}(\xi_1)-\log_p\mathrm{diam}(\xi_2).$$
The metric $\rho$ on $\mathsf{H}^1$ induces the strong topology, which is strictly finer than the weak topology restricted on $\mathsf{H}^1$. We can extend $\rho$ to $\mathsf{P}^1$ by setting $\rho(\xi_1,\xi_2)=\infty$ if at least one of $\xi_1$ and $\xi_2$ is a type I point.

\subsection{Polynomial maps} 
Let $f\in K[z]$ be a nonconstant polynomial. In this subsection. we consider the polynomial endomorphism on $\mathsf{P}^1$ induced by $f$, denoted also by $f$. %In this subsection we state the basic dynamics of $f$.

\subsubsection{Basic dynamics}\label{sec:dynamics}
In this subsection, we let $f$ have degree at least $2$. Its \emph{basin} of $\infty$ is defined by 
$$\Omega_\infty(f):=\{\xi\in\mathsf{P}^1:f^n(\xi)\to\infty,\ \text{as}\  n\to\infty\}.$$
Then $\Omega_\infty(f)\not=\emptyset$ since $f(\infty)=\infty\in\Omega_\infty(f)$.
The complement $\mathcal{K}(f):=\mathsf{A}^1\setminus\Omega_\infty(f)$ is called the \emph{filled Julia set} of $f$. Since $f$ has periodic points of type I and such points are not contained in $\Omega_\infty(f)$, the set $\mathcal{K}(f)\cap K$ is nonempty. The boundary $J(f):=\partial\mathcal{K}(f)$ is the \emph{(Berkovich) Julia set} of $f$, and its complement $F(f):=\mathsf{P}^1\setminus J(f)$ is \emph{the (Berkovich) Fatou set} of $f$. For the equivalent definitions, we refer \cite[Section 10.5]{Baker10} and \cite[Section 8.1]{Benedetto19}. We denote by $J_I(f):=J(f)\cap\mathbb{P}^1$ the \emph{classical Julia set} of $f$ and  by $F_I(f):=F(f)\cap\mathbb{P}^1$ the \emph{classical Fatou set} of $f$.

Pick a periodic point $x\in K$ and consider the smallest closed disk $\overline{D}(x,r)$ containing $\mathcal{K}(f)\cap K$. Let $\xi_f\in\mathsf{A}^1$ be the point corresponding to the disk $\overline{D}(x,r)$ and call it the \emph{base point} of $f$. It follows from \cite[Proposition 6.7]{Rivera00} that the point $\xi_f$ is of type II. We say $f$ is \emph{simple} if $J(f)=\{\xi_f\}$; this is the only case that $J(f)$ is a singleton. Otherwise, we say $f$ is \emph{nonsimple}; in this case we have the following.

\begin{lemma}\cite[Proposition 3.4]{Trucco14}\label{lem:base}
If $f\in K[z]$ be a nonsimple polynomial of degree at least $2$, then 
\begin{enumerate}
\item $\{\xi_f\}=f^{-1}(f(\xi_f)$;
\item $\xi_f\prec f(\xi_f)$;
\item $\mathrm{diam}(f^n(\xi_f))\to\infty$, as $n\to +\infty$;
\item $\xi_f$ has at least two preimages under $f$, and any two preimages $\xi_1,\xi_2$ of $\xi_f$, if $\xi_1\preceq\xi_2$, then $\xi_1=\xi_2$; and
\item the preimages of $\xi_f$ under $f$ are contained in at least two distinct directions in $T_{\xi_f}\mathsf{A}^1$. 
\end{enumerate}
\end{lemma}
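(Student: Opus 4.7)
The plan is to exploit systematically the defining property of $\xi_f$---that it corresponds to the smallest closed disk $\overline{D}=\overline{D}(x,r)\subseteq K$ containing $\mathcal{K}(f)\cap K$---together with two features of polynomials over $K$: $f$ sends a closed disk to a closed disk, and $\mathcal{K}(f)\cap K$ is totally $f$-invariant. Items~(1) and~(2) should be proved jointly. Since $f(\overline{D})$ is a closed disk containing $f(\mathcal{K}(f)\cap K)=\mathcal{K}(f)\cap K$, the minimality of $\overline{D}$ yields $\overline{D}\subseteq f(\overline{D})$, i.e.\ $\xi_f\preceq f(\xi_f)$. The equality case $\overline{D}=f(\overline{D})$ would make $\xi_f$ a fixed type~II point at which $f$ has good reduction, which would force $J(f)=\{\xi_f\}$, contradicting nonsimpleness; this yields the strict inequality in~(2). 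For~(1), suppose a second component $\overline{D}'$ of $f^{-1}(f(\overline{D}))$ existed, necessarily disjoint from $\overline{D}$. Surjectivity of $f\colon\overline{D}'\to f(\overline{D})\supseteq\mathcal{K}(f)\cap K$ produces $z\in\overline{D}'$ with $f(z)\in\mathcal{K}(f)\cap K$; then $z\in f^{-1}(\mathcal{K}(f))=\mathcal{K}(f)$, so $z\in\mathcal{K}(f)\cap K\subseteq\overline{D}$, contradicting $\overline{D}'\cap\overline{D}=\emptyset$.

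For~(3), I would iterate~(2) to obtain a strictly increasing chain $\xi_f\prec f(\xi_f)\prec f^2(\xi_f)\prec\cdots$. Once the diameter of $f^n(\xi_f)$ exceeds the escape radius of $f$, the polynomial behaves like its leading monomial on the corresponding disk, and diameters grow approximately as $r\mapsto|a_d|\,r^d$, forcing $\mathrm{diam}(f^n(\xi_f))\to\infty$.

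For~(4) and~(5), the strict inequality $\xi_f\prec f(\xi_f)$ combined with~(1) implies that $f^{-1}(\overline{D})$ is a proper subset of $f^{-1}(f(\overline{D}))=\overline{D}$ and splits as a disjoint union $\bigsqcup_{i=1}^{k}\overline{D}_i$ of closed disks inside $\overline{D}$, each mapping onto $\overline{D}$. If $k=1$, then $\mathcal{K}(f)\cap K\subseteq f^{-1}(\overline{D})=\overline{D}_1\subsetneq\overline{D}$ would contradict minimality, so $k\ge 2$. Since disjoint disks in $K$ are incomparable under $\preceq$, this also yields the second clause of~(4). Finally, if all $\overline{D}_i$ lay in a single direction at $\xi_f$, they would sit inside a common open sub-disk $D(y,r)\subsetneq\overline{D}$; a slight refinement of the minimality argument (the smallest closed disk containing $\bigsqcup_i\overline{D}_i$ then has diameter strictly less than~$r$) would give a contradiction. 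Hence the preimages occupy at least two distinct directions of $T_{\xi_f}\mathsf{A}^1$, which is~(5).

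The step I expect to be the main obstacle is the appeal in~(2) to the equivalence \emph{``$\xi_f$ is fixed if and only if $J(f)=\{\xi_f\}$.''} Justifying this requires either invoking good-reduction theory---a fixed type~II base point reduces $f$ to a polynomial over the residue field whose Berkovich Julia set is a single Gauss-type point---or a direct dynamical argument showing that, once $\xi_f$ is fixed with full local degree (as forced by~(1)), the complement of $\overline{D}$ lies in $\Omega_\infty(f)$ while every direction at $\xi_f$ is attracted to a fixed Fatou component. This is where I would lean most heavily on the surrounding literature on Berkovich polynomial dynamics.
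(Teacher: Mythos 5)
The paper does not prove this lemma itself; it is quoted verbatim from Trucco, so there is no in-paper argument to compare against. Judged on its own terms, your proposal has the right ingredients but contains a genuine gap and a circularity.

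The gap is in item~(1). You rule out a second component of $f^{-1}(f(\overline{D}))$ disjoint from $\overline{D}$, but you never show that the component of $f^{-1}(f(\overline{D}))$ \emph{containing} $\overline{D}$ is exactly $\overline{D}$. A priori that component could be a strictly larger disk $\overline{D}_1\supsetneq\overline{D}$, whose boundary point $\xi_{\overline{D}_1}\succ\xi_f$ would then be a second preimage of $f(\xi_f)$, contradicting~(1). The fix is short but must be said: a nonconstant polynomial is strictly order-preserving on $\mathsf{A}^1$ ($\xi_1\prec\xi_2\Rightarrow f(\xi_1)\prec f(\xi_2)$), so $\overline{D}_1\supsetneq\overline{D}$ would force $f(\overline{D}_1)\supsetneq f(\overline{D})$, whereas a component of $f^{-1}(f(\overline{D}))$ must map onto $f(\overline{D})$. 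Once this is added, $\overline{D}$ is a full component and your ``second component'' argument finishes~(1). The same monotonicity is what justifies the phrase ``necessarily disjoint from $\overline{D}$,'' which you currently assert without proof.

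The circularity is in the order of~(1) and~(2). Your deduction that $f(\xi_f)=\xi_f$ implies good reduction uses $\deg_{\xi_f}f=d$, which is precisely the content of~(1); you flag this yourself as the delicate step. As written, (2) comes first and silently appeals to~(1). Since your argument for~(1), repaired as above, does not need~(2), the remedy is simply to prove~(1) first, extract $\deg_{\xi_f}f=d$, and only then run the good-reduction contradiction for~(2). Relatedly, in~(3) the chain $\xi_f\prec f(\xi_f)\prec f^2(\xi_f)\prec\cdots$ is strictly increasing, but that alone does not show the diameters exceed the escape radius; a bounded strictly increasing sequence would never get there. The clean route again uses~(1): since $\deg_{\xi_f}f=d$ and the ramification locus is the convex hull of $\mathrm{Crit}(f)\cup\{\infty\}$, one has $\deg_{\xi}f=d$ for every $\xi\in[\xi_f,\infty[$, hence $\mathrm{diam}(f(\xi))=|a_d|\,\mathrm{diam}(\xi)^d$ along that ray; combined with $\mathrm{diam}(f(\xi_f))>\mathrm{diam}(\xi_f)$ this gives geometric growth with ratio $|a_d|R_f^{d-1}>1$ and the divergence follows. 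Items~(4) and~(5) are correct as written, with the small caveat in~(5) that the strict inequality ``smallest closed disk containing $\bigsqcup_i\overline{D}_i$ has diameter $<R_f$'' relies on there being finitely many $\overline{D}_i$, each a closed disk properly inside the open disk $D(y,R_f)$; that is worth making explicit.
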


Denote by $R_f:=\mathrm{diam}(\xi_f)$. By Lemma \ref{lem:base}(1), we have that 
%Moreover, for any $\xi\in(\xi_f,\infty]$, the set $\mathsf{B}(\overrightarrow{\xi\xi_f})$ contains $\mathcal{K}(f)$, see \cite[Lemma 3.1]{Trucco14}.
$\deg_{\xi_f}f=d$ and hence
 \begin{equation*}\label{equ:ratio-diam}
\mathrm{diam}(f(\xi_f))=|a_d|R_f^d,
\end{equation*}
 where $a_d\in K\setminus\{0\}$ is the leading coefficient of $f$.
By Lemma \ref{lem:base}, we also have that $J(f)\subset\mathsf{P^1}\setminus\mathsf{B}(\overrightarrow{\xi_f\infty})$.

The periodic cycles of $f$ in $\mathbb{P}^1$ are classified to be \emph{attracting}, \emph{indifferent} or \emph{repelling} according to the absolute value of its multiplier is less than, equal to, or more than $1$. For a periodic cycle in $\mathsf{H}^1$, it is \emph{indifferent} or \emph{repelling} if the local degree of the iteration of $f$ at this cycle  is equal to or more than $1$. All the repelling cycles are in $J(f)$ and the classical nonrepelling cycles are in $F(f)$, see \cite[Theorem 5.14 and Theorem 8.7]{Benedetto19}. Moreover, we say a point in $J(f)$ is \emph{wandering} if it is not eventually periodic under iteration of $f$.

\begin{lemma}\label{lem:repelling}
Let $f\in K[z]$ be a polynomial of degree at least $2$. If $f$ has no wandering points in $J(f)\cap\mathsf{H}^1$, then every point in $J(f)\cap\mathsf{H}^1$ is eventually mapped to a repelling cycle. 
\end{lemma}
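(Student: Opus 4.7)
The plan is to reduce the statement to showing that every periodic point of $f$ in $J(f)\cap\mathsf{H}^1$ is repelling, and then eliminate the indifferent alternative. First I would record the basic invariance facts: polynomials send $\mathbb{P}^1$ to $\mathbb{P}^1$ and so preserve the hyperbolic space $\mathsf{H}^1$, while $J(f)$ is totally invariant under $f$ (cf.\ \cite[Section 10.5]{Baker10}). Hence the full forward orbit of any $\xi\in J(f)\cap\mathsf{H}^1$ remains in $J(f)\cap\mathsf{H}^1$.

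Second, I would invoke the nonwandering hypothesis exactly as stated in the paper: by definition $\xi$ is eventually periodic, so there exist $n\ge 0$ and $m\ge 1$ for which $\eta:=f^n(\xi)$ satisfies $f^m(\eta)=\eta$, and by the first step $\eta\in J(f)\cap\mathsf{H}^1$. To conclude it then suffices to show that the cycle of $\eta$ is repelling, i.e.\ $\deg_\eta f^m\ge 2$; the statement of the lemma follows since $\xi$ is mapped into that cycle by $f^n$.

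The core of the argument is the structural fact, due to Rivera-Letelier (see \cite[Chapter 8]{Benedetto19}), that every indifferent periodic point in $\mathsf{H}^1$ lies in the Fatou set. The reason is that at such a point $f^m$ acts as a local isometry for the path metric $\rho$ on $\mathsf{H}^1$, and the induced action on the tangent space $T_\eta\mathsf{P}^1$ is a Möbius transformation over the residue field; one extracts from this an equicontinuous neighborhood (an indifferent, or singular, Fatou component) containing $\eta$. Since by assumption $\eta\in J(f)$, this is incompatible with $\deg_\eta f^m=1$, so the cycle must satisfy $\deg_\eta f^m\ge 2$ and is therefore repelling.

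The main obstacle is essentially bibliographic rather than logical: the paper's background section only records that \emph{classical} nonrepelling cycles are Fatou, so one must cite the separate (and more delicate) Rivera-Letelier result that nonclassical indifferent cycles are Fatou as well. Once that citation is in place, the proof is just the two-line chain (nonwandering $\Rightarrow$ periodic $\Rightarrow$ repelling) indicated above.
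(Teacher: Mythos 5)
Your overall architecture is the same as the paper's: use total invariance of $J(f)\cap\mathsf{H}^1$ and the nonwandering hypothesis to reduce to showing that periodic Julia points in $\mathsf{H}^1$ are repelling, and then rule out the indifferent alternative. The difference is entirely in how that alternative is eliminated, and this is where your argument has a gap.

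You assert, as a ``structural fact,'' that every indifferent periodic point in $\mathsf{H}^1$ lies in the Fatou set, and you justify it heuristically by saying that $f^m$ is a local isometry and the tangent map is a M\"obius transformation, ``from which one extracts an equicontinuous neighborhood.'' That heuristic is exactly where a subtlety is hidden: for a general rational map $\phi$ and a type II periodic point $\eta$ with $\deg_\eta\phi^m=1$, the local dynamics near $\eta$ is \emph{not} governed solely by the tangent M\"obius transformation. There can be \emph{bad directions} $\vec{v}$ at $\eta$ for which $\phi^m(\mathsf{B}(\vec{v}))$ is not the ball $\mathsf{B}(T_\eta\phi^m(\vec{v}))$ but a set of the form $\mathsf{P}^1\setminus\overline{\mathsf{B}(\vec{w})}$ — i.e.\ the ball wraps all the way around, covering $\eta$ and $\infty$. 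This happens precisely when $\mathsf{B}(\vec{v})$ captures one of the $d^m-1$ ``extra'' preimages of $\eta$, and $\deg_\eta\phi^m=1$ does nothing to prevent it. In that situation the iterates of a small annular neighborhood of $\eta$ can swallow essentially all of $\mathsf{P}^1$, and $\eta$ ends up in the Julia set, so your cited ``fact'' does not hold at the generality you state it.

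This is exactly the content of the observation the paper records before invoking \cite[Theorem 2.1]{Kiwi14}: for a polynomial $f$, if $\infty\notin\mathsf{B}(\vec{v})$ then $\infty\notin f^n(\mathsf{B}(\vec{v}))$ for all $n$, because polynomials map affinoid disks to affinoid disks. Consequently every preimage of $\eta$ other than $\eta$ itself sits in $\mathsf{B}(\overrightarrow{\eta\infty})$; every direction $\vec{v}\ne\overrightarrow{\eta\infty}$ at $\eta$ is a good direction, and $\overrightarrow{\eta\infty}$ is a fixed direction pointing into the basin of infinity. Kiwi's theorem then converts this into the conclusion that $J(f)$ has no indifferent nonclassical cycles. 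Your proposal skips the step where the polynomial hypothesis actually enters; without it, the indifferent case is not eliminated. So this is not, as you suggest, merely a bibliographic issue — you would need to either incorporate the polynomial-specific observation above, or supply a precise reference (not ``Chapter 8'') for a classification theorem that is valid for arbitrary rational maps over arbitrary complete algebraically closed $K$ and has exactly the biconditional form you need, and then check that it indeed covers the infinite-order tangent maps that arise when the residue field has characteristic $0$.
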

\begin{proof}
	Under the assumption, we have that any point in $J(f)\cap\mathsf{H}^1$ is eventually periodic. Note that for any $\xi\in\mathsf{P}^1$ any direction $\vec{v}\in T_{\xi}\mathsf{P}^1$ and any $n\ge 0$, if $\infty\not\in\mathsf{B}(\vec{v})$, we have  $\infty\not\in f^n(\mathsf{B}(\vec{v}))$. Then by \cite[Theorem 2.1]{Kiwi14}, we conclude that $J(f)$ contains no indifferent periodic cycles. Hence the conclusion follows. 
\end{proof}

For a periodic cycle, its \emph{basin} is the set of points in $\mathsf{P}^1$ whose forward orbits converge to the cycle in the weak topology. Then not only the attracting or indifferent cycles in $\mathbb{P}^1$ but also the repelling cycles in $\mathsf{H}^1$ have nontrivial basins. The basin of a repelling cycle in $\mathsf{H}^1$ may contain a \emph{wandering Fatou component} which is a component of $F(f)$ having infinite forward orbit. We say a wandering Fatou component is \emph{strictly wandering}  if it is not in the basin of any periodic cycle.

\begin{lemma}\label{lem:nonwandering}
Let $f\in K[z]$ be a polynomial of degree at least $2$. If $f$ has no wandering points in $J(f)\cap\mathsf{H}^1$, then $f$ has no strictly wandering component in $F(\phi)$.
\end{lemma}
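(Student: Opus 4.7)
I argue by contradiction. Suppose $U$ is a strictly wandering Fatou component, so that $\{f^n(U)\}_{n\ge 0}$ is an infinite pairwise-disjoint family of Fatou components, none of whose forward orbits converge in the weak topology to any periodic cycle.

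The first step is to locate a hyperbolic boundary point of $U$. When $f$ is simple, $J(f)=\{\xi_f\}$ and the two Fatou components of $f$ are attracting basins, so the conclusion is trivial; hence assume $f$ is nonsimple. Since $U$ is a nonempty proper open connected subset of the Berkovich tree $\mathsf{P}^1$, and $J(f)$ contains infinitely many type II points in the nonsimple case, a tree-topological argument produces a boundary point $\xi\in\partial U\cap\mathsf{H}^1$. By the no-wandering-Julia-points hypothesis, $\xi$ is (pre)periodic, and by Lemma \ref{lem:repelling} its eventual periodic orbit is repelling. After replacing $U$ by a suitable forward iterate (still strictly wandering), I may assume $\xi$ is itself a repelling periodic point of some period $m$, which forces $\xi$ to be of type II with $\deg_\xi f^m\ge 2$.

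The second step is a local analysis at $\xi$. Let $\vec{v}\in T_\xi\mathsf{P}^1$ be the direction from $\xi$ into $U$, and set $\vec v_n:=T_\xi f^{nm}(\vec v)$. Each iterate $f^{nm}(U)$ is a Fatou component attached to $\xi$ in direction $\vec v_n$. The wandering assumption, combined with the expansion of $f^m$ in the hyperbolic metric $\rho$ at $\xi$ (by factor $\deg_\xi f^m\ge 2$) and the fact that a preperiodic direction would force a preperiodic Fatou component, compels the directions $\{\vec v_n\}_{n\ge 0}$ to be pairwise distinct. Together with the standard description of basins of type II repelling cycles in Berkovich dynamics, this shows that the forward orbit of every point $y\in U$ converges in the weak topology to the cycle $\{\xi,f(\xi),\ldots,f^{m-1}(\xi)\}$. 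Hence $U$ lies in the basin of this cycle, contradicting the strict wandering of $U$.

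The main obstacle will be the second step: verifying rigorously that a wandering Fatou component attached to a type II repelling periodic point in a non-preperiodic direction lies in the weak-topology basin of the cycle. This requires a careful analysis of the tangent map $T_\xi f^m$ on $T_\xi\mathsf{P}^1$, combined with the nonarchimedean distortion properties of $f^m$ near $\xi$ and the standard theory of Fatou components in Berkovich dynamics. A secondary, more elementary, issue is the existence of a hyperbolic boundary point in $\partial U\cap\mathsf{H}^1$, which follows from the tree structure of $\mathsf{P}^1$ together with the perfectness of $J(f)$ in the nonsimple case.
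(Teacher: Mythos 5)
Your approach matches the paper's: the proof is by contraposition, taking a boundary point $\xi\in\partial U$ of a strictly wandering component $U$ and showing that $\xi$ must be a wandering point of $J(f)\cap\mathsf{H}^1$. The paper states this as a single sentence, and the argument you sketch in your second step---reduce to $\partial U=\{\xi\}$ a type~II repelling periodic point of period $m$, observe that $U=\mathsf{B}(\vec{v})$ for some $\vec v\in T_\xi\mathsf{P}^1$, note that $f^{nm}(U)=\mathsf{B}(T_\xi f^{nm}(\vec v))$ and that wandering forces these directions to be pairwise distinct, then use cofiniteness of directions in weak neighborhoods of $\xi$ to conclude $f^{nm}(y)\to\xi$ for every $y\in U$---is exactly the content implicit in that sentence, so you have correctly reconstructed the missing details.

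Two minor remarks. First, the simple case is misstated: when $J(f)=\{\xi_f\}$ the Fatou set $\mathsf{P}^1\setminus\{\xi_f\}$ has one component for each direction at the type~II point $\xi_f$, hence infinitely many components, not two; the conclusion still holds, but by the same directional-distinctness argument applied at $\xi_f$, not because there are only two components. Second, the appeal to hyperbolic-metric expansion of $f^m$ at $\xi$ is unnecessary: since $U$ is a Fatou component with $\partial U=\{\xi\}$, one has $U=\mathsf{B}(\vec v)$ outright (it is clopen in $\mathsf{P}^1\setminus\{\xi\}$), so distinctness of the directions $\vec v_n$ follows directly from $f^{nm}(U)\neq f^{n'm}(U)$, with no metric estimate needed; and $\deg_\xi f^m\ge 2$ is automatic from $\xi\in J(f)$, without invoking Lemma~\ref{lem:repelling}.
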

\begin{proof}
	If $f$ has a strictly wandering Fatou componnet, then the boundary of this wandering component gives a wandering point in $J(f)\cap\mathsf{H}^1$. 
\end{proof}

\subsubsection{Mapping property}
Since $f$ is a polynomial, for any two points $\xi_1\prec\xi_2$ in $\mathsf{A}^1$, we have $f(\xi_1)\prec f(\xi_2)$. Moreover, we have the following observation.

\begin{lemma}\label{lem:obs}
Let $f\in K[z]$ be a polynomial of degree at least $1$. Then 
\begin{enumerate}
\item for any two points $\xi_1\prec\xi_2$ in $\mathsf{H}^1$,
 $$||f||_{\xi_1}<||f||_{\xi_2}$$
 \item  for any type II point $\xi\in\mathsf{H}^1$ and any type I point $x_0\prec \xi$, if $0\not\in f(D(x_0,\mathrm{diam}(\xi)))$, then 
 $$||f||_\xi=|f(x_0)|.$$
 \end{enumerate}
\end{lemma}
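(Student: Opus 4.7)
\smallskip

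\noindent\textbf{Proof plan.} Both parts follow from the explicit formula $\|g\|_\xi = \max_i |a_i|\, r^i$ for the sup-seminorm at a type II or III point $\xi$ corresponding to a disk $\overline{D}(x,r)$ once $g$ is expanded around the center $x$.

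For part (2), I would factor $f(z) = a_d \prod_{j=1}^{d}(z - \alpha_j)$ over $K$. Since $\xi$ is a type II point with $\mathrm{diam}(\xi) = r$ and $x_0 \prec \xi$, we may identify $\xi$ with the closed disk $\overline{D}(x_0, r)$ and write $\|f\|_\xi = \sup_{z\in \overline{D}(x_0,r)} |f(z)|$. The hypothesis $0 \notin f(D(x_0,r))$ says precisely that no root $\alpha_j$ lies in the open disk $D(x_0,r)$, i.e., $|x_0 - \alpha_j| \ge r$ for each $j$. For any $z \in \overline{D}(x_0,r)$ we have $|z - x_0| \le r \le |x_0 - \alpha_j|$, and writing $z - \alpha_j = (z - x_0) + (x_0 - \alpha_j)$ the nonarchimedean triangle inequality gives $|z - \alpha_j| \le |x_0 - \alpha_j|$. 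Multiplying these bounds over $j$ and inserting $|a_d|$ yields $|f(z)| \le |f(x_0)|$. Since equality holds at $z = x_0$, we conclude $\|f\|_\xi = |f(x_0)|$.

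For part (1), I first reduce to the case where $\xi_1$ and $\xi_2$ are both of type II or III. Since $\xi_1 \prec \xi_2$, the associated disks are strictly nested, so we may select a common center $x_1 \in K$ giving $\overline{D}(x_1, r_1) \subsetneq \overline{D}(x_1, r_2)$ with $r_j := \mathrm{diam}(\xi_j)$ and $0 < r_1 < r_2$. Expanding $f(z) = \sum_{i=0}^{d} a_i (z-x_1)^i$ with $a_d \ne 0$ (as $\deg f = d \ge 1$), we get $\|f\|_{\xi_j} = \max_{0\le i\le d}|a_i|\, r_j^i$. The leading term $|a_d|\, r^d$ is strictly increasing in $r$, and every nonzero term $|a_i|\, r^i$ with $i \ge 1$ is also strictly increasing, so term-by-term the maximum strictly grows as $r$ passes from $r_1$ to $r_2$, yielding $\|f\|_{\xi_1} < \|f\|_{\xi_2}$. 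The type IV case is then obtained by approximation: given a type IV $\xi_1 \prec \xi_2$, I would take a sequence of type II points $\eta_n$ on the segment $]\xi_1,\xi_2[$ converging to $\xi_1$, apply the type II/III result to each pair $(\eta_n, \xi_2)$, and pass to the limit using continuity of $\xi \mapsto \|f\|_\xi$ in the weak topology.

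The main technical obstacle is ensuring strictness in part (1) in the presence of a dominating constant term $|a_0|$ at the smaller radius; the argument goes through because the leading coefficient $a_d$ is forced to be nonzero by $\deg f \ge 1$, so the contribution $|a_d|\, r^d$ provides the necessary strict growth once the radius is large enough, and type IV approximation is made rigorous via the continuity of the seminorm along geodesic segments.
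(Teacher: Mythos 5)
Your treatment of part (2) via the factorization $f(z) = a_d\prod_j (z-\alpha_j)$ is correct and in fact more direct than the paper's: you bound $|f(z)| \le |f(x_0)|$ on $\overline{D}(x_0,r)$ root by root using $|z-\alpha_j|\le|x_0-\alpha_j|$, whereas the paper computes $\|f - f(x_0)\|_\xi = \mathrm{diam}(f(\xi))$ and then splits into the cases $\mathrm{diam}(f(\xi)) < |f(x_0)|$ and $\mathrm{diam}(f(\xi)) = |f(x_0)|$ (the latter needing the extra observation that $f(\xi)\in\ ]0,\infty[$ and that $f$ maps disks to disks). Both routes are valid.

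Part (1), however, has a genuine gap, and it is exactly the one you flagged before arguing around it. With $\|f\|_{\xi_{x_1,r}} = \max_{0\le i\le d}|a_i|r^i$, the constant term $|a_0|r^0$ does not move with $r$, and nothing forbids it from being the maximizer at \emph{both} radii $r_1$ and $r_2$; in that case $\|f\|_{\xi_1} = \|f\|_{\xi_2}$. Your rescue, ``once the radius is large enough,'' does not help, because the lemma asserts strict inequality for \emph{every} pair $\xi_1\prec\xi_2$, not only sufficiently separated ones. Concretely, $f(z)=z+1$ with $\xi_j = \xi_{0,r_j}$, $0<r_1<r_2<1$, gives $\|f\|_{\xi_1}=\|f\|_{\xi_2}=1$. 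It is worth noting that the paper's own proof of (1) contains the same unjustified step: it correctly derives $\|f - f(x)\|_{\xi_1}<\|f-f(x)\|_{\xi_2}$ from $f(\xi_1)\prec f(\xi_2)$, but the passage to $\|f\|_{\xi_1}<\|f\|_{\xi_2}$ does not follow, since $\|f\|_\xi=\max(|f(x)|,\mathrm{diam}(f(\xi)))$ and $|f(x)|$ can dominate at both points. What does hold in general, and what the paper actually uses downstream (in Proposition \ref{lem:kappa} and in the proof of Theorem \ref{thm:bound}(2)), is the non-strict inequality $\|f\|_{\xi_1}\le\|f\|_{\xi_2}$; with your method that weaker conclusion is immediate from $r_1<r_2$, so you may as well prove that instead.
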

\begin{proof}
For statement (1), pick $x\in K$ with $x\prec\xi_1$ and set $y:=f(x)$. Then $||f-y||_{\xi_1}=\mathrm{diam}(f(\xi_1))$ and $||f-y||_{\xi_2}=\mathrm{diam}(f(\xi_2))$. Hence $||f-y||_{\xi_1}<||f-y||_{\xi_2}$ since $f(\xi_1)\prec f(\xi_2)$. It follows that $||f||_{\xi_1}<||f||_{\xi_2}$.

For statement (2), since $0\not\in f(D(x_0,\mathrm{diam}(\xi)))$, we have that $||f-f(x_0)||_\xi=\mathrm{diam}(f(\xi))$. If $\mathrm{diam}(f(\xi))<|f(x_0)|$, the conclusion follows immediately. If $\mathrm{diam}(f(\xi))=|f(x_0)|$, then we have $f(\xi)\in]0,\infty[$. Since $f$ maps disk to disk, we conclude that $||f||_\xi=|f(x_0)|$.
\end{proof}

If $f$ has degree $d\ge 2$, its derivative $f'\in K[z]$ is a polynomial of degree $d-1$, which extends to $\mathsf{P}^1$. We will apply Lemma \ref{lem:obs} for $f'$. Moreover, in this case  a $\emph{critical point}$ of $f$ is a point in $\mathbb{P}^1$ at which $f'$ vanishes in local coordinate; then the point $\infty$ is a critical point of $f$, and we denote by $\mathrm{Crit}(f)\subset K$ the set of critical points of $f$ differing from $\infty$.

Let us state the following mapping properties for polynomials. In fact, such properties also hold for more general convergent power series, see \cite[Section 3.4]{Benedetto19}, so here we omit the proof.

\begin{lemma}\label{lem:basic}
	Let $f\in K(z)$ be a polynomial of degree at least $2$. Then the following hold.
	\begin{enumerate}
		\item If $c\in\mathrm{Crit}(f)$, then there exist $0<\varepsilon_c<1$, $m_c\ge 2$ and $C>0$ such that 
		$$\mathrm{diam}(f(D(c,\varepsilon_c)))=C\mathrm{diam}(D(c,\varepsilon_c))^{m_c}.$$
		\item If $f$ is one-to-one on a disk $D\subset K$, then $f(D)\subset K$ is a disk. Moreover, for any $x\in D$
		$$\mathrm{diam}(f(D))=|f'(x)|\mathrm{diam}(D).$$
	\end{enumerate}
\end{lemma}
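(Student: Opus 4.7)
The plan is to Taylor-expand $f$ at the relevant base point and exploit the ultrametric behavior of the supremum norm: for a convergent (in particular, polynomial) power series $\sum a_k(z-y)^k$ on a disk $D(y,r)$, the supremum on $D(y,r)$ equals $\max_k |a_k|r^k$, and this maximum is attained in a single term whenever one term strictly dominates the others.

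For part (1), expand $f(z)=f(c)+\sum_{k\ge 1}a_k(z-c)^k$. Since $c$ is a critical point, $a_1=f'(c)=0$, so the smallest index $m_c$ with $a_{m_c}\ne 0$ satisfies $m_c\ge 2$ (such an index exists because $f$ is nonconstant). For $r<\varepsilon_c$, where
$$\varepsilon_c:=\min\Bigl\{1,\ \min_{k>m_c,\, a_k\ne 0}\bigl(|a_{m_c}|/|a_k|\bigr)^{1/(k-m_c)}\Bigr\},$$
the term $a_{m_c}(z-c)^{m_c}$ strictly dominates every other term on $D(c,r)$, so the ultrametric identity gives $\mathrm{diam}(f(D(c,r)))=|a_{m_c}|r^{m_c}$. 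Setting $C:=|a_{m_c}|$ yields the stated formula.

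For part (2), fix any $x\in D$ with $D=D(x,r)$ and expand $f(z)=f(x)+\sum_{k\ge 1}b_k(z-x)^k$ with $b_1=f'(x)$. That $f(D)$ is a disk follows from the standard nonarchimedean fact that polynomials map open disks to open disks (Lemma \ref{lem:obs} combined with the description via the Newton polygon of $f(z)-w$). The heart of the matter is then to show that injectivity of $f$ on $D$ forces the linear term to dominate strictly, i.e.\ $|b_1|r>|b_k|r^k$ for every $k\ge 2$. Granting this, the ultrametric computation above yields $\mathrm{diam}(f(D))=|b_1|r=|f'(x)|\mathrm{diam}(D)$, and independence from $x$ is automatic since any point of $D$ is a center.

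The main obstacle is therefore the dominance claim in (2), which I would prove by contraposition via the Newton polygon of $g(z):=f(z)-f(x)-w$ for $w$ in $f(D)\setminus\{f(x)\}$. If some term $|b_k|r^k$ with $k\ge 2$ were at least $|b_1|r$, then the Newton polygon of $g$ over $D(x,r)$ would exhibit a slope segment of horizontal length at least $2$, producing (by the nonarchimedean Weierstrass preparation / Newton polygon theorem, see \cite[Section 3.4]{Benedetto19}) at least two roots of $g$ in $D(x,r)$ for generic such $w$, contradicting injectivity of $f$ on $D$. Since the paper references Benedetto's treatment for precisely these assertions, the argument sketched here is the one the author has in mind when omitting the proof.
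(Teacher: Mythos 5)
The paper does not actually prove this lemma; it cites \cite[Section 3.4]{Benedetto19} and moves on, so there is no competing in-house argument. Your Taylor-expansion-plus-Newton-polygon sketch is the standard proof from nonarchimedean analysis, and it is essentially the one Benedetto gives, so the route is the right one.

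One small but real inaccuracy in part (2): injectivity of $f$ on the \emph{open} disk $D(x,r)$ does not force strict domination $|b_1|r>|b_k|r^k$ for $k\ge 2$. Take $f(z)=z+z^2$ over $\mathbb{C}_p$ with $p\ne 2$: on $D(0,1)$ one checks $f$ is injective (if $f(z)=f(w)$ with $z\ne w$ then $z+w=-1$, impossible when $|z|,|w|<1$), yet $|b_1|\cdot 1=|b_2|\cdot 1$. The correct equivalence for an open disk is the non-strict inequality $|b_1|r\ge|b_k|r^k$ for all $k\ge 2$; the strict version characterizes injectivity on the \emph{closed} disk $\overline{D}(x,r)$. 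Your Newton polygon contraposition is still fine if phrased against $|b_k|r^k>|b_1|r$ (strict), which does produce extra roots in the open disk. Fortunately the target formula $\mathrm{diam}(f(D))=|b_1|r$ is unaffected: for any $r'<r$ the non-strict inequality at $r$ upgrades to strict inequality at $r'$, so $|f(z)-f(x)|=|b_1||z-x|$ for all $z\in D(x,r)$, and taking the supremum over $r'<r$ gives $|b_1|r$. A one-line correction --- replace the strict inequality by the non-strict one and note it suffices --- closes the gap.

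Part (1) is correct as written. (If you want to be fussy about a possible tie at $|z-c|=\varepsilon_c$, note the lemma only asserts existence of \emph{some} $\varepsilon_c$, so you may shrink it to make the domination strict on the closed disk of that radius.)
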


The $m_c$ in Lemma \ref{lem:basic} is the local degree $\deg_cf$ of $f$ at $c$. In fact, the local degree $\deg.f$ of $f$ at type I points extends to $\mathsf{P}^1$ upper semicontinuously with respect to the weak topology, see \cite[Section 3.1]{Faber13I}. The ramification locus of $f$ is 
$$\mathcal{R}_f:=\{\xi\in\mathsf{P}^1:\deg_\xi f\ge 2\},$$
which is connected and contains the convex hull of $\mathrm{Crit}(f)\cup\{\infty\}$, see \cite[Theorem C]{Faber13I}. Thus for any $\xi_1\in\mathcal{R}_f$ and $\xi_2\in\mathsf{P}^1$, if $\xi_1\prec\xi_2$  we have $2\le\deg_{\xi_1}f\le\deg_{\xi_2}f$; moreover, if in addition $\xi_2$ is sufficiently close to $\xi_1$, we even have $2\le\deg_{\xi_1}f=\deg_{\xi_2}f$.  Recall that  $f$ is tame if $\mathcal{R}_f$ is local finite; in this case,  $\mathcal{R}_f$ is in fact a finite tree.

Now we begin to estimate $||f'||_\xi$.  For $\xi\in\mathsf{H}^1$, define the \emph{distortion} 
$$\delta(f,\xi):=\log_p\mathrm{diam}(\xi)+\log_p||f'||_\xi-\log_p||f||_\xi.$$
We refer \cite[Section 3]{Benedetto14} and \cite[Section 4.1]{Jacobs19} for more details about this distortion. Using this distortion, we have the following estimate on $||f'||_\xi$. Recall that 
\begin{equation}\label{equ:kappa-f}
\kappa:=\kappa(f)=\min\{\log_p|\deg_\xi f|:\xi\in\mathsf{P}^1\}.
\end{equation}

\begin{proposition}\label{prop:distortion}
Let $f\in K[z]$ be a polynomial of degree at least $2$. Then for any $\xi\in\mathrm{H}^1$, 
\begin{equation}\label{equ:diam1}
\log_p||f'||_\xi\ge\kappa+\log_p\mathrm{diam}(f(\xi))-\log_p\mathrm{diam}(\xi);
\end{equation}
in particular, 
$$||f'||_\xi\ge p^{\kappa}\frac{\mathrm{diam}(f(\xi))}{\mathrm{diam}(\xi)}.$$
Moreover,  if $f$ is tame, then 
\begin{equation}\label{equ:diam2}
\log_p||f'||_\xi=\log_p\mathrm{diam}(f(\xi))-\log_p\mathrm{diam}(\xi);
\end{equation}
in particular, 
$$||f'||_\xi=\frac{\mathrm{diam}(f(\xi))}{\mathrm{diam}(\xi)}.$$
\end{proposition}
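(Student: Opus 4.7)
The plan is to expand $f$ in a Taylor series about a classical point and read off both quantities from the coefficients. First I would reduce to the case $\xi = \xi_{x,r}$ of type II or III: any type IV point is a limit, along a nested sequence of classical disks, of type II points, and since $\xi\mapsto ||g||_\xi$ is continuous in the weak topology for every $g\in K[z]$, both $||f'||_\xi$ and $\mathrm{diam}(f(\xi))=||f-f(x)||_\xi$ (for any $x\prec\xi$) depend continuously on $\xi$, so the inequalities at types II and III pass to the limit. For such a $\xi$ and any $x\in K$ with $x\prec\xi$, expand $f(y)=\sum_{k=0}^{d}a_k(y-x)^k$ with $a_k=f^{(k)}(x)/k!$; the standard supremum-seminorm calculation on a nonarchimedean disk then yields
\[
\mathrm{diam}(f(\xi))=\max_{k\ge 1}|a_k|\,r^k \qquad\text{and}\qquad ||f'||_\xi=\max_{k\ge 1}|k\,a_k|\,r^{k-1}.
\]

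Next I would invoke the characterization of $k^\ast:=\deg_\xi f$ as the largest index at which $\max_k|a_k|r^k$ is attained: at a type II point, after rescaling so that $\xi$ becomes the Gauss point, this is the degree of the reduction of $f$; at a type III point the maximizing index is automatically unique, since equality of two monomials would force $r\in|K^\times|$. This gives
\[
||f'||_\xi \ge |k^\ast|\cdot|a_{k^\ast}|\,r^{k^\ast-1} = |\deg_\xi f|\cdot\frac{\mathrm{diam}(f(\xi))}{\mathrm{diam}(\xi)},
\]
and passing to $\log_p$ together with the definition \eqref{equ:kappa-f} of $\kappa$ yields \eqref{equ:diam1}.

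For the tame case, the ultrametric bound $|k|\le 1$ in $K$ supplies the reverse inequality $||f'||_\xi\le\max_k|a_k|r^{k-1}=\mathrm{diam}(f(\xi))/\mathrm{diam}(\xi)$, while tameness forces $p\nmid\deg_\xi f$ and hence $|\deg_\xi f|=1$; the chain of inequalities then collapses to \eqref{equ:diam2}. The main friction point I anticipate is justifying the identification of $\deg_\xi f$ with the largest Taylor-maximizing index: this is standard at type II via reduction theory but calls for a short direct check at type III, and is handled at type IV purely by the continuity argument above.
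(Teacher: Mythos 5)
Your proof is correct and follows essentially the same route as the paper: the paper's citation to \cite[Lemma 5]{Jacobs19} is precisely the Taylor/Weierstrass--coefficient computation you carry out explicitly, its citation to \cite[Theorem 3.3]{Benedetto19} is exactly your identification of $\deg_\xi f$ with the largest maximizing index, and its citation to \cite[Lemma 3.3]{Benedetto14} (that $\delta(f-f(x_0),\xi)\le 0$) is your ultrametric bound $|k|\le 1$. One small imprecision worth flagging: for a type~IV point $\xi$ there is no classical $x\prec\xi$, so the identity $\mathrm{diam}(f(\xi))=\|f-f(x)\|_\xi$ does not literally apply there; one should instead take the approximating type~II points $\xi_i$ from the defining nested sequence, use $x_i\prec\xi_i$ for each, and pass to the limit --- which is the content of the paper's phrase ``by continuity and density.''
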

\begin{proof}
By continuity and density, we assume $\xi$ is of type II.
The proof of \eqref{equ:diam1} follows from the proof of \cite[Lemma 5]{Jacobs19} and the fact that $\deg_\xi f=\#\{f^{-1}(f(x_0))\cap D_{\xi}\}$ where $D_\xi\subset K$ is corresponding closed disk of $\xi$ and $x_0\in D_\xi$ (See \cite[Theorem 3.3]{Benedetto19}). To see \eqref{equ:diam2}, by \cite[Lemma 3.3]{Benedetto14}, we have $\delta(f-f(x_0),\xi)\le 0$, and again applying the proof of \cite[Lemma 5]{Jacobs19}, from the tameness assumption, we in fact conclude that  $\delta(f-f(x_0),\xi)=0$, which implies the desired equality.
\end{proof}
%\begin{proof}
%Consider the closed disk $\overline{D}\subset K$ corresponding to $\xi$ and pick $x\in\overline{D}$. Set $y:=f(x)$ and consider the map $g(z):=f(z)-y$. It follows that $g(z)$ is a tame polynomial with $g(x)=0$.  Denote by $N_x\ge 1$ the number of preimages $g^{-1}(0)\cap\overline{D}$. By \cite[Lemma 3.3]{Benedetto14}, we conclude that 
%$$\log_p|N_x|\le \delta(g,\xi)\le 0.$$
%Note $p\nmid N_x$, for otherwise, $\deg_\xi g$ is divisible by $p$, which implies that $g$ is not tame. Thus $|N_x|=1$. It follows that 
%$$ \delta(g,\xi)=0.$$
%Hence 
%$$\log_p\mathrm{diam}(\xi)+\log_p(g'(\xi))-\log_p(g(\xi))=0.$$
%Observe that $g'(\xi)=f'(\xi)$ and $g(\xi)=f(\xi)-y=\mathrm{diam}(f(\xi))$. The conclusion follows. 
%\end{proof}

\subsubsection{Further estimate on derivatives}\label{sec:derivative} 
In this subsection, we states an estimate on derivatives of iteration. We let $f$ have degree at least $2$ and continue to use the notations in previous subsections.
First consider the base point $\xi_f$ of $f$ and denote by
\begin{equation}\label{equ:xi}
\Xi_f:=p^{-\kappa}||f'||_{\xi_f}=\max\left\{||f'||_{\xi_f}/|\deg_\xi f|: \xi\in\mathsf{P}^1\right\}.
\end{equation}
Observe that $||f'||_{\xi_f}\le \Xi_f$. Moreover, if $J_I(f)\not=\emptyset$, then for any $x\in J_I(f)$, we have $|f'(x)|\le||f'||_{\xi_f}\le\Xi_f<+\infty$.

\begin{lemma}\label{lem:>1}
Let $f\in K[z]$ be a polynomials of degree at least $2$. Then $\Xi_f\ge1$. Moreover, if $f$ is nonsimple, then $\Xi_f>1$.
\end{lemma}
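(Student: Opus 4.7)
The plan is to apply Proposition \ref{prop:distortion} at the base point $\xi_f$ and then exploit the structural properties of $\xi_f$ collected in Lemma \ref{lem:base}. Since $\xi_f$ is of type II, it lies in $\mathsf{H}^1$, so Proposition \ref{prop:distortion} gives
$$\log_p\|f'\|_{\xi_f}\ \ge\ \kappa+\log_p\mathrm{diam}(f(\xi_f))-\log_p\mathrm{diam}(\xi_f).$$
Translating this into the definition \eqref{equ:xi} of $\Xi_f$, I get
$$\Xi_f\ =\ p^{-\kappa}\|f'\|_{\xi_f}\ \ge\ \frac{\mathrm{diam}(f(\xi_f))}{\mathrm{diam}(\xi_f)}.$$
So it suffices to compare the diameters of $\xi_f$ and $f(\xi_f)$.

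In the simple case, $J(f)=\{\xi_f\}$. Forward invariance of $J(f)$ forces $f(\xi_f)\in J(f)=\{\xi_f\}$, so $f(\xi_f)=\xi_f$ and the ratio above equals $1$, giving $\Xi_f\ge 1$. In the nonsimple case, Lemma \ref{lem:base}(2) gives $\xi_f\prec f(\xi_f)$ strictly in the partial order, which (since the partial order reflects inclusion of the underlying disks) forces $\mathrm{diam}(\xi_f)<\mathrm{diam}(f(\xi_f))$, and hence $\Xi_f>1$.

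There is essentially no obstacle here: the whole argument is a two-line application of the distortion bound already established in Proposition \ref{prop:distortion}, together with the dichotomy between the simple and nonsimple cases coming from Lemma \ref{lem:base}. The only thing to be mindful of is that the formula \eqref{equ:xi} already incorporates the $p^{-\kappa}$ factor, so the distortion lower bound translates cleanly into a comparison of diameters without any further bookkeeping about $\kappa$ or local degrees.
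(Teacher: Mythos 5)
Your proof is correct and follows the same route as the paper: apply Proposition \ref{prop:distortion} at $\xi_f$ to reduce the bound on $\Xi_f$ to a comparison of $\mathrm{diam}(\xi_f)$ and $\mathrm{diam}(f(\xi_f))$, then invoke the partial order $\xi_f\preceq f(\xi_f)$ (strict, via Lemma \ref{lem:base}(2), in the nonsimple case). The paper states the inclusion $\xi_f\preceq f(\xi_f)$ without pausing to justify the simple case; your remark that $J(f)=\{\xi_f\}$ together with forward invariance forces $f(\xi_f)=\xi_f$ is a small but welcome clarification of that step.
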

\begin{proof}
Note that $\xi_f\preceq f(\xi_f)$ and hence $\mathrm{diam}(\xi_f)\le\mathrm{diam}(f(\xi_f))$. By Proposition \ref{prop:distortion}, we immediately have $\Xi_f\ge1$. If $f$ is nonsimple, then $\xi_f\prec f(\xi_f)$. Again by Proposition \ref{prop:distortion}, we have $\Xi_f>1$.
\end{proof}
 %Then $\Xi_f\ge1$. Indeed, for otherwise, by Proposition \ref{prop:distortion}, we have $\mathrm{diam}(f(\xi_f))<\mathrm{diam}(\xi_f)$, which contradicts to $\xi_f\preceq f(\xi_f)$.  If $f$ is nonsimple, we in fact further have $\Xi_f>1$. In this subsection, we provide an estimate of the derivatives for certain iteration of $f$. 
 
 In our next result (Proposition \ref{lem:kappa}), we will state a lower bound of certain derivatives, which is a nonarchimedean analog of the estimates in \cite[Section 1]{Przytycki93}. Let us first set up the following notations.
Recall that $R_f=\mathrm{diam}(\xi_f)$ and consider the closed disk  $D_{\xi_f}\subset K$ corresponding to $\xi_f$. For $s>0$ and $x\in D_{\xi_f}$, denote by $D_s(x):=D(x,R_f\Xi_f^{-s})\subset K$. It follows from Proposition \ref{prop:distortion} that $f^s(D_s(x))\subset D_{\xi_f}$. Let $\mathsf{B}_s(x):=\mathsf{B}(x,R_f\Xi_f^{-s})\subset \mathsf{A}^1$. Then $\mathsf{B}_s(x)\cap K=D_s(x)$. In the following proofs, if the maps are clear, we will omit the subscripts in the corresponding quantities. 

 We first show the following finiteness result on the critical values of iterations in small neighborhoods of any Julia critical point. 
\begin{lemma}\label{lem:crit-value}
Suppose $f\in K[z]$ be a polynomial of degree $d\ge 2$ and assume $\mathrm{Crit}(f)\cap J(f)\not=\emptyset$. Then there exists a constant $M\ge 1$ such that $f^n$ has at most $M$ critical values in $D_n(c)$ for any $c\in\mathrm{Crit}(f)\cap J(f)$ and any $n\gg1 $.
\end{lemma}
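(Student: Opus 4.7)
The plan is to reduce the problem to a uniform count of how often the forward orbits of the critical points of $f$ visit $D_n(c)$, and then to handle the delicate case of a wandering critical orbit via a contradiction with $c\in J(f)$ using Berkovich Montel. Writing $\mathrm{Crit}(f)=\{c_1,\ldots,c_r\}$ with $r\le d-1$, the chain rule identifies the critical values of $f^n$ with $\{f^j(c_i):1\le j\le n,\ 1\le i\le r\}$, so the set of critical values of $f^n$ lying in $D_n(c)$ has cardinality at most $\sum_{i=1}^{r}|O(c_i)\cap D_n(c)|$, where $O(c_i):=\{f^j(c_i):j\ge 1\}$. It therefore suffices to bound, for each $c_i$, the quantity $|O(c_i)\cap D_n(c)|$ by a constant depending only on $f$ (independent of the Julia critical point $c$ and of $n\gg 1$).

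I would split the argument according to the dynamical type of $c_i$. If the orbit of $c_i$ escapes to $\infty$, then $f^j(c_i)\notin D_{\xi_f}$ once $j$ exceeds some $N_a(c_i)$, and since $D_n(c)\subset D_{\xi_f}$ for every $n$, we obtain $|O(c_i)\cap D_n(c)|\le N_a(c_i)$. If $c_i$ is preperiodic, then $O(c_i)$ is finite of some size $N_b(c_i)$. The remaining case is that the orbit of $c_i$ is infinite and bounded (so contained in $\mathcal{K}(f)\cap K$); in this case I claim the stronger bound $|\{f^j(c_i):1\le j\le n\}\cap D_n(c)|\le 1$ once $n$ is large enough.

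To establish this last claim, suppose toward a contradiction that $f^{j_1}(c_i)$ and $f^{j_2}(c_i)$ are two distinct points of $D_n(c)$ with $1\le j_1<j_2\le n$, and set $\ell:=j_2-j_1\in\{1,\ldots,n-1\}$. Since $c$ has local degree $m_c\ge 2$, Lemma~\ref{lem:basic}(1) gives $\mathrm{diam}(f(D_n(c)))=|a_{m_c}|(R_f\Xi_f^{-n})^{m_c}$, where $a_{m_c}$ is the leading Taylor coefficient of $f$ at $c$. The Lipschitz estimate $|f'|\le\Xi_f$ on $D_{\xi_f}$ (available because $\|f'\|_{\xi_f}\le\Xi_f$), together with the observation that every $f^r(D_n(c))$ for $r\le n$ is a disk of diameter at most $R_f$ containing $f^r(c)\in\mathcal{K}(f)\cap K\subset D_{\xi_f}$, keeps all intermediate iterates in $D_{\xi_f}$ and yields
\[\mathrm{diam}(f^\ell(D_n(c)))\le\Xi_f^{\ell-1}|a_{m_c}|(R_f\Xi_f^{-n})^{m_c}.\]
Applying the Lipschitz bound directly to $f$ at $c$ on $D_{\xi_f}$ also forces $|a_{m_c}|R_f^{m_c-1}\le\Xi_f$. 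Combining the two inequalities gives $\mathrm{diam}(f^\ell(D_n(c)))\le R_f\Xi_f^{\ell-n(m_c-1)}<R_f\Xi_f^{-n}=\mathrm{diam}(D_n(c))$ as long as $\ell\le n-1$. Since $f^\ell(D_n(c))\cap D_n(c)\ne\emptyset$ (it contains $f^{j_2}(c_i)$), the ultrametric dichotomy for disks in $K$ forces $f^\ell(D_n(c))\subsetneq D_n(c)$. Iterating $f^\ell$, one obtains $\bigcup_{k\ge 0}f^k(\mathsf{B}_n(c))\subset\bigcup_{r=0}^{\ell-1}f^r(\mathsf{B}_n(c))$, a finite union of Berkovich disks inside $D_{\xi_f}$. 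The forward $f$-orbit of $\mathsf{B}_n(c)$ therefore omits $\infty$, so the Berkovich Montel characterization of the Fatou set gives $\mathsf{B}_n(c)\subset F(f)$, contradicting $c\in\mathsf{B}_n(c)\cap J(f)$.

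The main obstacle is the quantitative ratio estimate just above: securing the strict inequality $\mathrm{diam}(f^\ell(D_n(c)))<\mathrm{diam}(D_n(c))$ uniformly for $1\le\ell\le n-1$ depends crucially on the Lipschitz-induced bound $|a_{m_c}|R_f^{m_c-1}\le\Xi_f$, which in particular rescues the delicate simple-critical-point case $m_c=2$. Granting the claim, setting $M:=\sum_{i=1}^{r}\max\{N_a(c_i),N_b(c_i),1\}$ produces a constant depending only on $f$ that delivers the lemma.
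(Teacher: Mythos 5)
Your proposal is correct, and the engine driving it is the same one the paper uses: you compare $\mathrm{diam}(f^\ell(D_n(c)))$ with $\mathrm{diam}(D_n(c))$ by peeling off the first step, where the local degree $m_c\ge 2$ produces $\mathrm{diam}(f(D_n(c)))=|a_{m_c}|(R_f\Xi_f^{-n})^{m_c}$ (Lemma~\ref{lem:basic}(1)), and then bounding the remaining $\ell-1$ steps by $\Xi_f$ per iterate via Proposition~\ref{prop:distortion}; then the disk dichotomy and $c\in J(f)$ rule out $f^\ell(D_n(c))\subseteq D_n(c)$. That is precisely the paper's claimed inequality $\ell_2-\ell_1\ge(m_c-1)n+O(1)$ read in the contrapositive, together with the inclusion $D_n(c)\subset f^{\ell_2-\ell_1}(D_n(c))$, which the paper asserts as a one-liner from $c\in J(f)$ and which you unpack into the Montel/filled-Julia-set argument.

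The main difference is organizational. The paper treats every critical point $c'$ of $f$ uniformly via the spacing claim and sets $M=(C^{-1}+1)(d-1)$, whereas you split by the dynamical type of $c_i$. Your case split is harmless but not needed: the spacing estimate works without distinguishing escaping, preperiodic, and bounded-wandering orbits (and indeed the paper's proof makes no such distinction). On the other hand you observe correctly that since $m_c\ge 2$ the spacing already exceeds $n-1$, so for each $c_i$ at most one index $1\le j\le n$ can contribute; this is a genuine, if mild, sharpening of the paper's stated constant. One place your exposition is a bit compressed is the bound $|a_{m_c}|R_f^{m_c-1}\le\Xi_f$: the inequality is true, but the phrase ``Lipschitz bound directly to $f$ at $c$'' is doing real work. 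What is actually being used is that $\mathrm{diam}(f(\xi_f))=\max_{k\ge 1}|a_k|R_f^{k}$ (so $|a_{m_c}|R_f^{m_c-1}$ is one of the competitors for the maximum of $\mathrm{diam}(f(\xi_f))/R_f$), together with the distortion estimate $\mathrm{diam}(f(\xi_f))\le\Xi_fR_f$ of Proposition~\ref{prop:distortion}; equivalently, $\Xi_f=p^{-\kappa}\|f'\|_{\xi_f}\ge|m_c a_{m_c}|R_f^{m_c-1}/|m_c|$. The factor $p^{-\kappa}$ is essential here; the raw sup-norm $\|f'\|_{\xi_f}$ alone would not give the bound when $p\mid m_c$. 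It would be worth making that explicit.
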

\begin{proof}
First fix a critical point $c'\in\mathrm{Crit}(f)\cap J(f)$. We claim that there exists some constant $C>0$ such that for any $n\gg1$ and $0\le\ell_1<\ell_2\le n$, if $f^{\ell_1}(c')\in D_n(c)$ and $f^{\ell_1}(c')\in D_n(c)$, then $\ell_1-\ell_1\ge Cn$. Form this claim we obtain the conclusion by setting $M=(C^{-1}+1)\cdot (d-1)$. Now let us show the claim.

Observing that $f^{\ell_2-\ell_1}(f^{\ell_1}(c'))\in f^{\ell_2-\ell_1}(D_n(c))$ and noting that $c\in J(f)$, we conclude that 
\begin{equation}\label{equ:include}
	D_n(c)\subset f^{\ell_2-\ell_1}(D_n(c)).
\end{equation}
 Let $\varepsilon_c>0$ and $m_c\ge 2$ be as in Lemma \ref{lem:basic} (1), and observe that $f$ is nonsimple since $J_I(f)\not=\emptyset$. By Lemma \ref{lem:>1}, we have $\Xi>1$, and then considering $n$ sufficiently large such that $\Xi^{-n}<\varepsilon_c$,  we have 
\begin{equation}\label{equ:=}
\mathrm{diam}(f(D_n(c)))=C_1R^{m_c}\Xi^{-m_cn}
\end{equation}
for some $C_1>0$.  Since  $f^\ell(D_n(c))\subset D_{\xi_f}$ for any $1\le\ell\le n$, by \eqref{equ:=} and Proposition \ref{prop:distortion}, we conclude that 
\begin{equation}\label{equ:diam-less}
	\mathrm{diam}(f^{\ell_2-\ell_1}(D_n(c)))=\mathrm{diam}(f^{\ell_2-\ell_1-1}(f(D_n(c))))\le C_1R^{m_c}\Xi^{-m_cn}\Xi^{\ell_2-\ell_1-1}.
\end{equation}
Then by \eqref{equ:include} and \eqref{equ:diam-less}, we obtain
$$R\Xi^{-n}\le  C_1R^{m_c}\Xi^{-m_cn}\Xi^{\ell_2-\ell_1-1}.$$ 
Thus 
$$\ell_2-\ell_1\ge(m_c-1)n+\frac{\log_p(\Xi/(C_1R^{m_c-1}))}{\log_p\Xi}.$$
\end{proof}

Since a component of the primages of a Berkovich open disk in $\mathsf{A}^1$ under $f$ is also a  Berkovich open disk in $\mathsf{A}^1$, together with the previous lemmas, we have the following estimate of derivative. 
%Under a rational map, the image of a Berkovich disk is either a Berkovich disk or the whole Berkovich space \cite{??}, but a preimage of a Berkovich disk may not be a disk \cite{??}. If the such preimages are berkovich disks, we have the following estimate on the derivative. 

%\begin{lemma}
%Let $\phi\in K(z)$ be a tame rational map of degree at least $2$. Assume that $\infty\in F(\phi)$ and $J_I(\phi)\not=\emptyset$. Then for any point $x\in J_I(\phi)$ and for any $n\ge 1$, there exists $r_x\ge 0$ such that each component of $\phi^{-n}(\mathrm{B}(x,r_x))$ is a Berkovich disk.
%\end{lemma}

%\begin{proof}
%Fix an $r>0$ and consider the components of $\phi^{-n}(\mathrm{B}(x,r))$. If on all such components $\phi$ has degree $1$, by considering the inverse $\phi^{-n}$, we obtain the conclusion. Now let assume that there is such a component on which $\deg\phi\ge 2$. Since $\phi$ is tame, this component contains a critical point. Let 

%$1\le n_j<n_{j+1}\le n$ 

%Since $\mathcal{R}_\phi$ is locally finite has has $2\deg\phi-2$ type I points, we can choose $r_x$ such that if each component of $\phi^{-n}(B(x,r_x))$ contains at most one critical point counted without multiplicities. 
%\end{proof}

\begin{proposition}\label{lem:kappa}
Let $f\in K[z]$ be a polynomial of degree at least $2$. Assume that $c\in\mathrm{Crit}(f)\cap J(f)\not=\emptyset$ and pick $x_0\in K$. Then there exists $\theta>0$ such that for every $n\gg 1$, the following holds: if $f^n(x_0)\in D_{n}(c)$, then there exist $j$ and $\beta$ with $0\le j<j+\beta\le n$ and $\beta\ge\theta n$ such that 
\begin{equation}\label{equ:der}
|(f^\beta)'(f^j(x_0))|\ge\Xi^{-2n}p^{n\kappa}.
\end{equation}
\end{proposition}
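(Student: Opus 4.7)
The strategy is to pull back the target disk $D_n(c)$ along the orbit of $x_0$ and extract a long sub-arc of this pull-back chain on which $f^\beta$ is one-to-one, whereupon the derivative estimate becomes a diameter ratio via Lemma~\ref{lem:basic}~(2). Concretely, I would define $U_j\subseteq K$ to be the open disk which is the component of $f^{-(n-j)}(D_n(c))$ containing $f^j(x_0)$, so that $U_n=D_n(c)$ and each restriction $f\colon U_j\to U_{j+1}$ is a surjective branched cover of some degree $e_j\ge 1$; write $\tilde U_j\in\mathsf{H}^1$ for the associated type II Berkovich point. The first step is to verify that $U_j\subseteq D_{\xi_f}$, and in particular $\mathrm{diam}(U_j)\le R$, for every $0\le j\le n-1$ once $n$ is large. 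Because $\mathcal{K}(f)\subseteq D_{\xi_f}$, the hypothesis $f^n(x_0)\in D_n(c)\subseteq D_{\xi_f}$ with $n\gg 1$ forces $x_0\in\mathcal{K}(f)$ (otherwise $|f^k(x_0)|\to\infty$ would contradict $f^n(x_0)\in D_{\xi_f}$), so $f^j(x_0)\in\mathcal{K}(f)\subseteq D_{\xi_f}$ and $U_j$ meets $D_{\xi_f}$. The ultrametric dichotomy then excludes the alternative $D_{\xi_f}\subsetneq U_j$, which would force $f^{n-j}(U_j)\supseteq D_{f^{n-j}(\xi_f)}$ whose diameter tends to $+\infty$ by Lemma~\ref{lem:base}~(3), contradicting $\mathrm{diam}(U_n)=R\Xi^{-n}$.

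Next I would control the branching set $S:=\{k\in\{0,\dots,n-1\}:e_k>1\}$. Each $k\in S$ supplies some $c'\in\mathrm{Crit}(f)\cap U_k$ with $f^{n-k}(c')\in D_n(c)$. The argument in the proof of Lemma~\ref{lem:crit-value} uses only $c\in J(f)$ and therefore applies to any critical point of $f$: consecutive visits of the orbit of $c'$ to $D_n(c)$ are separated by at least $Cn$ iterates. Hence each of the at most $d-1$ critical points contributes at most $1/C+1$ such indices, giving $|S|\le M':=(d-1)(1/C+1)$. A pigeonhole on $\{0,\dots,n-1\}\setminus S$, partitioned by $S$ into at most $M'+1$ consecutive sub-intervals, then yields a sub-interval $[j,j+\beta-1]$ of length $\beta\ge(n-M')/(M'+1)\ge\theta n$ with $\theta:=1/(2(M'+1))$ for $n$ large, on which every $f\colon U_k\to U_{k+1}$ is one-to-one. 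Consequently $f^\beta|_{U_j}$ is one-to-one and Lemma~\ref{lem:basic}~(2) gives
\[
|(f^\beta)'(f^j(x_0))|=\mathrm{diam}(U_{j+\beta})/\mathrm{diam}(U_j).
\]

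Finally I would lower-bound this ratio by combining $\mathrm{diam}(U_j)\le R$ with an estimate on $\mathrm{diam}(U_{j+\beta})$ obtained by iterating Proposition~\ref{prop:distortion} on the remaining tail $k=j+\beta,\dots,n-1$. The containment $\tilde U_k\preceq\xi_f$ together with Lemma~\ref{lem:obs}~(1) applied to the polynomial $f'$ gives $\|f'\|_{\tilde U_k}\le\|f'\|_{\xi_f}=p^{\kappa}\Xi$, and then Proposition~\ref{prop:distortion} yields the term-by-term inequality $\log_p\mathrm{diam}(U_{k+1})-\log_p\mathrm{diam}(U_k)\le\log_p\Xi$. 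Summing from $k=j+\beta$ to $n-1$ produces $\mathrm{diam}(U_{j+\beta})\ge R\,\Xi^{-(2n-j-\beta)}$, so the ratio is at least $\Xi^{-(2n-j-\beta)}\ge\Xi^{-2n}\ge\Xi^{-2n}p^{n\kappa}$, the last step using $\kappa\le 0$ and hence $p^{n\kappa}\le 1$. The main obstacle is the first step, namely uniformly ensuring $U_j\subseteq D_{\xi_f}$ across all $j\in\{0,\dots,n-1\}$, together with extracting from the \emph{proof} of Lemma~\ref{lem:crit-value} a pointwise bound on $|S|$ rather than merely the total count of distinct critical values of $f^n$ in $D_n(c)$.
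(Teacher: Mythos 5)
Your proposal is correct, and it follows the same skeleton as the paper's proof: pull the disk $D_n(c)$ back along the orbit of $x_0$ to get a chain of nested disks, bound the number of indices at which a critical point is encountered (via the argument inside Lemma~\ref{lem:crit-value}, which, as you correctly observe, uses only $c\in J(f)$ and so bounds the visits of each $c'\in\mathrm{Crit}(f)$ separately), run a pigeonhole to extract a long critical-free segment, and convert the derivative on that segment into a diameter ratio via Lemma~\ref{lem:basic}~(2). Your containment argument $U_j\subseteq D_{\xi_f}$ (ultrametric dichotomy plus $\mathrm{diam}(f^{n-j}(\xi_f))\ge R>R\Xi^{-n}$) reproduces the paper's assertion that $D^{(n)}$ is disjoint from $\mathsf{B}(\overrightarrow{\xi_f\infty})$ and hence has uniformly bounded diameter.

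Where you genuinely diverge from the paper is the last step. The paper expands the total ratio $\mathrm{diam}(U_n)/\mathrm{diam}(U_0)$ as a product over critical single-steps and critical-free segments, bounds the critical single-step contributions by Lemma~\ref{lem:basic}~(1), passes from ratios to type I derivatives, and then runs a proof by contradiction: assuming every segment had derivative below the claimed bound and using $\|f'\|_\xi\le\|f'\|_{\xi_f}=p^{\kappa}\Xi$ on the short segments, the full product would be forced below $C\Xi^{-n}$, contradicting the a priori lower bound. You instead estimate directly: $\mathrm{diam}(U_j)\le R$, and $\mathrm{diam}(U_{j+\beta})\ge R\Xi^{-(2n-j-\beta)}$ by propagating the per-step bound $\mathrm{diam}(U_{k+1})/\mathrm{diam}(U_k)\le\Xi$ (from Proposition~\ref{prop:distortion} together with $\|f'\|_{\tilde U_k}\le\|f'\|_{\xi_f}$) backward along the tail $k=j+\beta,\dots,n-1$, yielding $\Xi^{-(2n-j-\beta)}\ge\Xi^{-2n}\ge\Xi^{-2n}p^{n\kappa}$. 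Your version is marginally cleaner: it avoids both the contradiction scaffold and the explicit control of the critical single-step ratios via Lemma~\ref{lem:basic}~(1). Both approaches rely on the same two ingredients (Proposition~\ref{prop:distortion} and the uniform bound on critical indices), and both give the same constant $\theta$, so the difference is one of presentation rather than of substance.
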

\begin{proof}
Consider the component $B^{(n)}$ of $f^{-n}(\mathsf{B}_{n}(c))$ containing $x_0$ and set $D^{(n)}=B^{(n)}\cap\mathbb{P}^1$. Then $D^{(n)}$ is a disk which is disjoint with $\mathsf{B}(\overrightarrow{\xi_f\infty})$. It follows that $\mathrm{diam}(D^{(n)})$ has a uniform upper bound. 
Thus there exists a constant $C>0$ such that 
\begin{equation}\label{equ:ratio-lower}
\frac{\mathrm{diam}(f^n(D^{(n)}))}{\mathrm{diam}(D^{(n)})}=\frac{\mathrm{diam}(\mathsf{B}_{n}(c))}{\mathrm{diam}(D^{(n)})}\ge C\Xi^{-n}.
\end{equation}

Let $M\ge 0$ be as in Lemma \ref{lem:crit-value}. Then there exist $1\le N\le M+1$ and $0\le j_{\ell-1}<j_{\ell}\le n$ for $1\le \ell\le N$ with $j_0=0$ such that $f^{j_\ell}(D^{(n)})$ contains a critical point $c_\ell\in\mathrm{Crit}(f)$. Then we have
\begin{align}\label{equ:ratio}
	\frac{\mathrm{diam}(f^n(D^{(n)}))}{\mathrm{diam}(D^{(n)})}&=\prod_{j=1}^n\frac{\mathrm{diam}(f^j(D^{(n)}))}{\mathrm{diam}(f^{j-1}(D^{(n)})}\nonumber \\
	&=\frac{\mathrm{diam}(f(D^{(n)}))}{\mathrm{diam}(D^{(n)})}\prod_{\ell=0}^{N-1}\frac{\mathrm{diam}(f^{j_{\ell+1}}(D^{(n)}))}{\mathrm{diam}(f^{j_\ell+1}(D^{(n)})}\prod_{\ell=1}^N\frac{\mathrm{diam}(f^{j_{\ell}+1}(D^{(n)}))}{\mathrm{diam}(f^{j_\ell}(D^{(n)}))}\frac{\mathrm{diam}(f^{n}(D^{(n)}))}{\mathrm{diam}(f^{j_N+1}(D^{(n)}))}.
\end{align}

Observe that for sufficiently large $n$, we can obtain that $$\mathrm{diam}(f^{j_\ell}(D^{(n)}))<\varepsilon_{c_\ell},$$
where $\varepsilon_{c_\ell}>0$ is as in Lemma \ref{lem:basic}(1). It follows that $\prod_{\ell=1}^N\frac{\mathrm{diam}(f^{j_{\ell}+1}(D^{(n)}))}{\mathrm{diam}(f^{j_\ell}(D^{(n)}))}$ has a uniform upper bound. Then from \eqref{equ:ratio-lower} and \eqref{equ:ratio}, there exists a constant $C_1>0$ such that 
\begin{equation}\label{equ:ratio-noncrit}
	\prod_{\ell=0}^{N-1}\frac{\mathrm{diam}(f^{j_{\ell+1}}(D^{(n)}))}{\mathrm{diam}(f^{j_\ell+1}(D^{(n)})}\frac{\mathrm{diam}(f^{n}(D^{(n)}))}{\mathrm{diam}(f^{j_N+1}(D^{(n)}))}>C_1\Xi^{-n}.
\end{equation}
For $0\le\ell\le N-1$, set $\beta_\ell=j_{\ell+1}-j_\ell-1$ and set  $\beta_N=n-j_N-1$. Applying Proposition \ref{prop:distortion} to \eqref{equ:ratio-noncrit}, we conclude that 
\begin{equation}\label{equ:der-noncrit}
	p^{-(n-N)\kappa}\prod_{\ell=0}^N|(f^{\beta_\ell})'(f^{j_\ell+1}(x_0))|>C_1\Xi^{-n}.
\end{equation}
It follows that there exists $C_2\ge 0$ such that 
\begin{equation}\label{equ:der-noncrit}
	\prod_{\ell=0}^N|(f^{\beta_\ell})'(f^{j_\ell+1}(x_0))|>C_2p^{n\kappa}\Xi^{-n}.
\end{equation}

Since $n\gg 1$, for any $0<\theta<(n-N)/(nN+n)$, there exists $0\le\ell_0\le N$ such that $\beta_{\ell_0}>\theta n$. We can further take $0<\theta<1/(2N)$.
Now suppose to the contrary that \eqref{equ:der} fails. Since $\Xi>1$ and $\kappa\le 0$, applying  Lemma \ref{lem:obs}(1) to $f'$, we have
\begin{align*}
	\prod_{\ell=0}^N|(f^{\beta_\ell})'(f^{j_\ell+1}(x_0))|&=\prod_{\beta_{\ell}>\theta n}|(f^{\beta_\ell})'(f^{j_\ell+1}(x_0))|\prod_{\beta_{\ell}\le\theta n}|(f^{\beta_\ell})'(f^{j_\ell+1}(x_0))|\\
	&\le \Xi^{-2n}p^{n\kappa}||f'||_{\xi_f}^{\theta n N}\le \Xi^{-2n}p^{n\kappa(1+\theta N)}\Xi^{\theta n N}<p^{n\kappa}\Xi^{-3n/2},
\end{align*}
which contradicts \eqref{equ:der-noncrit}.

\end{proof}

\section{Trees for Polynomials}\label{sec:tree}
In this section, we assume that $f\in K[z]$ is a tame and nonsimple polynomial of degree $d\ge2$. Recall from previous section that $\xi_f\in\mathsf{P}^1$ is the base point of $f$ and $R_f:=\mathrm{diam}(\xi_f)$. We now introduce several trees associated to $f$.

\subsection{Trucco's tree}
In this subsection, we briefly introduce the Trucco's tree for $f$. For more details, we refer \cite{Trucco14}. 

Set $\mathcal{L}_0:=\{\xi_f\}$, and for $n\ge 0$, denote by $\mathcal{L}_n:=f^{-n}(\mathcal{L}_0)$. We say the set $\mathcal{L}_n$ is the level $n$ dynamical set of $f$ and each point in $\mathcal{L}_n$ is a level $n$ dynamical point of $f$. Recalling Lemma \ref{lem:base}, we say that a \emph{dynamical sequence} of $f$ is a decreasing sequence $\{L_n\}_{n\ge 0}$ of dynamical points such that $L_n\in\mathcal{L}_n$. Each point in $J(f)$ can be approached by a dynamical sequence, see \cite[Proposition 3.6]{Trucco14}, that is 
\begin{equation}\label{equ:J}
J(f)=\large\{\lim_{n\to+\infty}L_n : \{L_n\}_{n\ge 0}\ \text{is a dynamical sequence of}\ f\large\}.
\end{equation}
 Lemma \ref{lem:base}(4) implies that if $\{L_n\}_{n\ge 0}$ and $\{L'_n\}_{n\ge 0}$ are two distinct dynamical sequences of $f$, then $\lim\limits_{n\to+\infty}L_n\not=\lim\limits_{n\to+\infty}L'_n$. Thus for each $\xi\in J(f)$, there is a unique dynamical sequences $\{L_n(\xi)\}_{n\ge 0}$ with $L_n(\xi)\in\mathcal{L}_n$ such that $\xi=\lim\limits_{n\to+\infty}L_n(\xi)$. We say $\{L_n(\xi)\}_{n\ge 0}$ is the dynamical sequence for $\xi$.
 
 Lemma \ref{lem:base} and \eqref{equ:J} imply that the closure of the convex hull of $\cup_{n\ge 0}\mathcal{L}_n$ coincides with the convex hull $\mathrm{Hull}(J(f))$ of $J(f)$. Now let $\{c_1,\cdots,c_\ell\}$ be the set of the critical points of $f$ in $\Omega_\infty(f)$.  By \cite[Remark 4.4]{Trucco14}, we have that $1\le \ell\le d-1$ and $c_j\preceq\xi_f$ for each $1\le j\le \ell$.  Denote by $\xi_{c_j}\in\mathsf{A}^1$ the point such that 
$$\mathrm{Hull}(J(f))\cap[c_j,\xi_f]=[\xi_{c_j},\xi_f].$$
Then by \cite[Proposition 4.3]{Trucco14}, the branch points of $\mathrm{Hull}(J(f))$ are contained in the grand orbit $\mathrm{GO}(\{\xi_{c_1},\cdots,\xi_{c_\ell}\})$ of the set $\{\xi_{c_1},\cdots,\xi_{c_\ell}\}$. Moreover, $\xi_{c_j}\in\mathrm{Hull}(J(f))$ for all $1\le j\le \ell$ and $\xi_f\in\{\xi_{c_1},\cdots,\xi_{c_\ell}\}$.

For $\xi\in J(f)$, the \emph{geometric sequence} $\{G_n(\xi)\}_{n\ge 0}$ of $\xi$ is the decreasing sequence of the elements in the intersection $[\xi_f,\xi]\cap \mathrm{GO}(\{\xi_{c_1},\cdots,\xi_{c_\ell}\})$. Then such a sequence is unique for $\xi$. The \emph{generators} of the geometric sequences of  $f$ is the intersection 
$$\{\zeta_0,\cdots,\zeta_{q-1}\}:= \mathrm{GO}(\{\xi_{c_1},\cdots,\xi_{c_\ell}\})\cap\ ]\xi_f,f(\xi_f)]$$ 
with some $1\le q\le \ell$ and $\zeta_j\prec\zeta_{j-1}$ for $1\le j\le q$. Since $\xi_f\in\{\xi_{c_1},\cdots,\xi_{c_\ell}\}$, we in fact have $\zeta_0=f(\xi_f)$. Moreover, any point in $\mathrm{GO}(\{\xi_{c_1},\cdots,\xi_{c_s}\})\cap\mathrm{Hull}(J(f))$ eventually maps to $\zeta_j$ for some $0\le j\le q-1$.

For a point $\xi\in J(f)$, consider its dynamical sequence $\{L_n(\xi)\}_{n\ge 0}$ and geometric sequence $\{G_n(\xi)\}_{n\ge 0}$. We have $L_n(\xi)=G_{qn}(\xi)$ for all $n\ge 0$, and $f(G_n(\xi))=G_{n-q}(f(\xi))$ for all $n\ge q$, see \cite[Proposition 4.9]{Trucco14}.

The \emph{Trucco's tree} $\mathcal{T}_f$ for $f$ is the the convex hull of the grand orbit $\mathrm{GO}(\{\zeta_0,\cdots,\zeta_{q-1}\})$ of the generators with vertices at $\mathrm{GO}(\{\zeta_0,\cdots,\zeta_{q-1}\})$. As a topological space, $\mathcal{T}_f$ can also be written as
$$\mathcal{T}_f=\left(\mathrm{Hull}(J(f))\cup\ ]\xi_f,\infty[\ \right)\setminus J(f).$$
 Observe that $\mathcal{T}_f\subset\Omega_\infty(f)$ and all the vertices of $\mathcal{T}_f$ are type II points. Moreover, for an edge $e$ of $\mathcal{T}_f$, if $e\subset\mathcal{T}_f\setminus\ ]\xi_f, \infty[$, there exists a point $\xi\in J(f)$ with geometric sequence $\{G_n(\xi)\}_{n\ge 0}$ such that $e=]G_{n_0+1}(\xi), G_{n_0}(\xi)[$ for some $n_0\ge 0$; and if $e\subset ]\xi_f, \infty[$, setting $\zeta_q=\xi_f$, there exist $0\le j\le q-1$ and $\ell\ge 0$ such that $e=]f^\ell(\zeta_{j+1}), f^\ell(\zeta_j)[$.

The following result asserts that the local degree is constant on each edge of $\mathcal{T}_f$.

\begin{lemma}\cite[Proposition 4.10]{Trucco14}\label{lem:deg constant}
Let $f\in K[z]$ be a tame and nonsimple polynomial of degree at least $2$ and let $]\xi_1,\xi_2[$ be an edge of the tree $\mathcal{T}_f$ with $\xi_1\prec\xi_2$. Then for any $\xi\in ]\xi_1,\xi_2[$, 
$$\deg_{\xi}f=\deg_{\xi_1}f.$$
\end{lemma}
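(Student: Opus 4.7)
The plan is to exploit three ingredients: the tame structure of $\mathcal{R}_f$ (which makes it a finite tree), the characterization of the vertex set of $\mathcal{T}_f$ as a grand orbit, and the $f$-equivariance of the local degree. My approach is to prove that $\xi \mapsto \deg_\xi f$ is constant on the open edge $]\xi_1,\xi_2[$, and then identify this constant with the value $\deg_{\xi_1} f$ at the lower endpoint.

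First I would recall that, for a type II point $\xi \in \mathsf{H}^1$ with corresponding closed disk $D_\xi \subset K$, the local degree $\deg_\xi f$ can be computed as a Riemann-Hurwitz count on $f|_{D_\xi}$. In particular, along an upward segment (in the order $\prec$) the corresponding disks grow monotonically, so $\deg_\xi f$ is non-decreasing; together with upper semicontinuity, integer-valuedness, and the tameness assumption (which ensures $\mathcal{R}_f$ is a finite tree and hence that $\deg_\cdot f$ has only finitely many jumps along any compact segment), this reduces the problem to locating the jump points.

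Second, I would argue that the jump points of $\deg_\cdot f$ on $\mathrm{Hull}(J(f)) \cup \,]\xi_f,\infty[$ lie in $\mathrm{GO}(\{\xi_{c_1},\ldots,\xi_{c_\ell}\})$. On $]\xi_f,\infty[$ the jumps happen exactly at the $\xi_{c_j}$, because these are the precise thresholds at which the growing disk $D_\xi$ begins to contain the critical point $c_j \in \Omega_\infty(f)$. Applying $f$-equivariance $\deg_\xi(f^n) = \prod_{k=0}^{n-1}\deg_{f^k(\xi)} f$ and pulling the jump structure back along components of $f^{-n}$, I would conclude that the jump locus on preimages is contained in the grand orbit of the $\xi_{c_j}$. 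By the discussion preceding the lemma, this grand orbit agrees with the vertex set $\mathrm{GO}(\{\zeta_0,\ldots,\zeta_{q-1}\})$ of $\mathcal{T}_f$ within $\mathrm{Hull}(J(f)) \cup \,]\xi_f,\infty[$, so the interior $]\xi_1,\xi_2[$ of an edge contains no jump point. Hence $\deg_\xi f$ is constant on $]\xi_1,\xi_2[$.

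Finally, to identify the constant as $\deg_{\xi_1} f$, I would combine upper semicontinuity with the monotonicity remark that precedes this lemma: if $\deg_{\xi_1} f \ge 2$ then $\xi_1 \in \mathcal{R}_f$, so that remark yields $\deg_{\xi_1} f \le \deg_\xi f$ for $\xi \succ \xi_1$ sufficiently close to $\xi_1$, while upper semicontinuity gives the reverse inequality; and if $\deg_{\xi_1} f = 1$, then upper semicontinuity alone forces the constant value on the edge to be $1$. The main obstacle I anticipate is the second step: proving rigorously that the jump locus along segments inside $\mathrm{Hull}(J(f)) \setminus [\xi_f,\infty[$ is precisely contained in $\mathrm{GO}(\{\xi_{c_1},\ldots,\xi_{c_\ell}\})$ requires careful bookkeeping of how critical content distributes among preimage disks when pulling back along $f$. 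Tameness should ensure that this pullback behaves in a controllable combinatorial manner, but the details are the crux of the argument.
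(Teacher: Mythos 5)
The paper does not give a proof of this lemma; it is quoted from \cite[Proposition~4.10]{Trucco14}. Your monotonicity observation is correct in the tame setting: for a type II point $\xi$ with corresponding closed disk $D_\xi$, the tame Riemann--Hurwitz count gives $\deg_\xi f - 1 = \sum_{c\in\mathrm{Crit}(f)\cap D_\xi}(\deg_c f-1)$, which is non-decreasing as the disk grows, integer-valued, and has finitely many jump points on any compact segment since $\mathcal{R}_f$ is a finite tree. Your final identification of the constant value with $\deg_{\xi_1}f$, combining this monotonicity near $\xi_1$ with upper semicontinuity of $\deg_\cdot f$, is also sound.

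The middle step, however, contains a concrete error that defeats your pullback strategy. You claim that on $]\xi_f,\infty[$ the jumps occur exactly at the $\xi_{c_j}$, as the thresholds at which the growing disk first absorbs $c_j\in\Omega_\infty(f)$. But the paper records that $c_j\preceq\xi_f$ for each escaping critical point, so $D_{\xi_f}$ already contains all of $\mathrm{Crit}(f)$; hence $\deg_\xi f=d$ identically on $[\xi_f,\infty]$, with no jumps at all, and the points $\xi_{c_j}$ lie at or below $\xi_f$ rather than on $]\xi_f,\infty[$. Consequently the plan of propagating a jump pattern from $]\xi_f,\infty[$ backwards via $\deg_\xi f^n=\prod_k\deg_{f^k(\xi)}f$ starts from a constant function and proves nothing; moreover, since $f$ carries each open edge of $\mathcal{T}_f$ homeomorphically onto another open edge, constancy of $\deg_\cdot f$ on the image edge gives no direct control on $\deg_\xi f$ on the source edge, so multiplicativity alone will not close the gap. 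The genuine content of the lemma---which you yourself flag as the crux---is to show that no critical point $c$ can satisfy $c\vee\xi_1=\eta$ with $\eta$ interior to an edge $]\xi_1,\xi_2[\subset\mathrm{Hull}(J(f))$. That requires Trucco's structural input (his Proposition~4.3, that branch points of $\mathrm{Hull}(J(f))$ lie in $\mathrm{GO}(\{\xi_{c_1},\ldots,\xi_{c_\ell}\})$) together with an analysis separating escaping critical points, which attach to the hull at points of that grand orbit, from non-escaping ones, which attach through points of $J(f)\cap\mathsf{H}^1$ rather than through edge interiors. As written, that step remains an unproved gap in your proposal.
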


Moreover, from the upper semicontinuity of the local degree $\deg_{\cdot} f$ on $\mathsf{P}^1$, we immediately have that $\deg_{\xi_1}f\le\deg_{\xi_2}f$ for the edge $]\xi_1,\xi_2[$ in Lemma \ref{lem:deg constant}.

At any type II point $\xi\in\mathsf{H}^1$, the polynomial $f$ induces a natural map $T_\xi f: T_\xi\mathsf{P}^1\to T_{f(\xi)}\mathsf{P}^1$. For a direction $\vec{v}\in T_\xi\mathsf{P}^1$, if $\vec{v}\not=\overrightarrow{\xi\infty}$, then $f(\mathsf{B}(\vec{v}))=\mathsf{B}(T_\xi f(\vec{v}))$, see \cite[Theorem 9.42]{Baker10}. It follows that $f$ maps an edge of $\mathcal{T}_f$ onto an edge of $\mathcal{T}_f$ homeomorphically. Then by Lemma \ref{lem:deg constant} and \cite[Lemma 9.33]{Baker10}, for an edge $e:=]\xi_1,\xi_2[$ of $\mathcal{T}_f$ with $\xi_1\prec\xi_2$, the lengths
\begin{equation*}
	\rho(f(e))=\rho(f(\xi_1), f(\xi_2))=\deg_{\xi_1}f\cdot\rho(\xi_1,\xi_2)=\deg_{\xi_1}f\cdot\rho(e),
\end{equation*}
which we will apply repeatedly in the following sections to calculate the lengths for certain segments.

\subsection{Marked grid}\label{sec:grid}
The marked grid has been introduced by Branner and Hubbard \cite[Section 4]{Branner92} for complex cubic polynomials with one escaping critical point, also see \cite[Section 2]{Harris99}. In the nonarchimedean setting, the marked grids has been studied by Kiwi \cite[Section 4.1]{Kiwi06} for cubic polynomials. In this subsection, we further assume that $\mathrm{Crit}(f)\cap J(f)\not=\emptyset$ and consider the marked grids of $f$ with respect to a marked critical point $c_0\in\mathrm{Crit}(f)\cap J(f)$.

%Now let $f\in K[z]$ be a tame polynomial such that $\mathrm{Crit}(f)\cap\mathcal{J}(f)\not=\emptyset$. Then $\deg f\ge 3$. Indeed, for any nonsimple quadratic tame polynomial in $K[z]$, the unique critical point is contained in the basin of $\infty$, see \cite[Remark 4.4]{Trucco14}. We mark a critical point $c_0\in\mathrm{Crit}(f)\cap\mathcal{J}(f)$ and consider the marked grids of $f$ with respect to $c_0$.

For $\xi\in\mathcal{K}(f)$, there exists a unique point $\xi'\in[\xi,\infty[\ \cap J(f)$. Consider the geometric sequence $\{G_n(\xi')\}_{n\ge 0}$ of $\xi'$ and set $G_n(\xi):=G_n(\xi')$. For $j\ge 1$, denote by $e_j(\xi):=]G_{j-1}(\xi),G_j(\xi)[\subset\mathcal{T}_f$ the level $j$ edge in $\mathcal{T}_f\setminus\ ]\xi_f,\infty[$ for $\xi$. Moreover, for connivence, denote by $e_{0}:=]\zeta_{q-1},\xi_f[$. For any $j\ge 0$ and $k\ge 0$, set $e_{j,k}(\xi):=e_{j}(f^k(\xi))$. By Lemma \ref{lem:deg constant}, the polynomial $f$ has constant local degree on each $e_{j,k}(\xi)$.

\begin{definition}
For a point $\xi\in \mathcal{K}(f)$, the \emph{marked grid} of $\xi$ is the two dimensional array $\mathbf{M}(\xi)=(M_{j,k}(\xi))$ with $j\ge 0$ and $ k\ge 0$, where 

$$M_{\ell,k}(\xi)=
\begin{cases}
1\ \ \text{if}\ e_{\ell,k}(\xi)=e_\ell(c_0),\\
0\ \ \text{if otherwise}.
\end{cases}
$$

\end{definition}

If  $M_{\ell,k}(\xi)=1$, we say the position $M_{\ell,k}(\xi)$ is \emph{marked}. Since $e_0(\xi)=]\zeta_{q-1},\xi_f[=e_0(c_0)$, the position $M_{0,k}(\xi)$ is marked for all $k\ge 0$. Moreover, if $\xi=c_0$, then we say $\mathbf{M}(c_0)$ is the \emph{critical marked grid}; in this case $M_{\ell,0}(c_0)$ is marked for all $\ell\ge 0$.

%The marked grid of $c_0$ is called the \emph{critical marked grid}, and for $\xi\in\mathcal{J}(f)$, the position $M_{\ell,k}(\xi)$ is called to be marked if $M_{\ell,k}(\xi)=1$. For the critical marked grid $\mathbf{M}_n(c_0)$, we have that $M_{\ell,0}(c_0)$ is marked for all $\ell\ge 0$. 

%Moreover, since for any $\xi\in\mathcal{J}_f$, the edge $e_0(\xi)=(\zeta_{q-1},\xi_f)=e_0(c_0)$, it follows that $M_{0,k}(\xi)$ is marked for all $k\ge 0$.  %For the marked grid $\mathbf{M}_n(c_0)$, we have that $M_{\ell,0}(c_0)=1$ for all $\ell\ge 0$. Thus the entries $M_{0,k}(c_0)=M_{\ell,0}(c_0)=1$. %For the fundamental results on marked grids we refer \cite[Section 4]{Branner92},\cite[Section 2]{Harris99}, and \cite[Section 4.1]{Kiwi06}

For the critical marked grid $\mathbf{M}(c_0)$, fixing $k\ge1 $ and $m\ge k$, for any $0\le j\le m+1-k$, we set $\gamma_{m,k}(j)\ge 1$ to be the smallest positive integer such that $M_{j+q\gamma_{m,k}(j),k-\gamma_{m,k}(j)}(c_0)$ is marked, where $q$ is the number of the generators of the geometric sequences of $f$. To see the existence of such $\gamma_{m,k}(j)$, we start at $M_{j,k}(c_0)$ in $\mathbf{M}(c_0)$ and move towards the southwest by moving $qs$ rows when moving $s$ columns until hitting a marked position, then the number of columns moved to the left is $\gamma_{m,k}(j)$. Observe that  $1\le \gamma_{m,k}(j)\le k$. Moreover, $\gamma_{m,k}(j)$ is nondecreasing and $j+ q\gamma_{m,k}(j)$ is strictly increasing on $j$.

\subsection{Omitted tree}

In this subsection, we introduce another tree associated to $f$, called omitted tree, which, roughly speaking, omits a critical point.

Fix a critical point $c\in\mathrm{Crit}(f)$. Let $\mathcal{S}_c\subset\mathsf{P}^1$ be the convex hull of $\left(\mathrm{Crit}(f)\setminus\{c\}\right)\bigcup\{\infty\}$. Then $\mathcal{S}_c\subset\mathcal{R}_f$.
Define
$$H_c:=\bigcup_{j\ge 0}f^j(\mathcal{S}_c).$$
Since  $f$ maps $[\xi,\infty]$ onto $[f(\xi),\infty]$ homeomorphically, for any $\xi\in\mathsf{A}^1$,  the tree $H_c$ is the convex hull of $\bigcup_{j\ge 0}f^j\left(\mathrm{Crit}(f)\setminus\{c\}\right)\bigcup\{\infty\}$.

The \emph{omitted tree} $\mathcal{H}_c$ of $f$ with respect to $c$ is the intersection
%Note that the closure $\overline{\mathcal{T}_f}\subset\mathsf{P}^1$ of the Trucco's tree $\mathcal{T}_f$ is the the convex hull of $J(f)\cup\{\infty\}$. Set 
$$\mathcal{H}_c:=H_c\cap\mathcal{T}_f.$$
Note that $\mathcal{H}_c=\emptyset$ if and only if $f$ is unicritical, that is $\mathrm{Crit}(f)=\{c\}$. Indeed, if $f$ is unicritical, then $H_c=\{\infty\}$, and hence $\mathcal{H}_c=\emptyset$ since $\infty\not\in\mathcal{T}_f$; if $f$ is not unicritcial, then $\xi_f$ is in $H_c$ and hence in $\mathcal{H}_c$.

\begin{lemma}\label{lem:forward}
The tree $\mathcal{H}_c$ is forward invariant. 
\end{lemma}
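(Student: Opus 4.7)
The plan is to decompose $\mathcal{H}_c = H_c \cap \mathcal{T}_f$ and observe that each of the two sets is forward invariant on its own, so their intersection is as well.

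First I would handle $H_c$. By definition $H_c = \bigcup_{j\ge 0} f^j(\mathcal{S}_c)$, and since $f$ is continuous (so commutes with unions),
\[
f(H_c) \;=\; f\Bigl(\bigcup_{j\ge 0} f^j(\mathcal{S}_c)\Bigr) \;=\; \bigcup_{j\ge 1} f^j(\mathcal{S}_c) \;\subset\; H_c.
\]
This step is immediate from the construction.

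Next I would show $f(\mathcal{T}_f)\subset\mathcal{T}_f$. Recall $\mathcal{T}_f$ is the convex hull of the grand orbit $\mathrm{GO}(\{\zeta_0,\dots,\zeta_{q-1}\})$ with vertices at $\mathrm{GO}(\{\zeta_0,\dots,\zeta_{q-1}\})$. By definition of the grand orbit, $f$ sends $\mathrm{GO}(\{\zeta_0,\dots,\zeta_{q-1}\})$ into itself. The key geometric fact recalled before the statement is that at each type II vertex $\xi\in\mathcal{T}_f$, the tangent map $T_\xi f$ sends each direction $\vec{v}\neq\overrightarrow{\xi\infty}$ to a direction $T_\xi f(\vec v)$ with $f(\mathsf{B}(\vec v))=\mathsf{B}(T_\xi f(\vec v))$; consequently $f$ maps every edge of $\mathcal{T}_f$ homeomorphically onto an edge of $\mathcal{T}_f$. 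Combining with the invariance of the vertex set under forward iteration, this yields $f(\mathcal{T}_f)\subset\mathcal{T}_f$.

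Finally, using the elementary set-theoretic inclusion $f(A\cap B)\subset f(A)\cap f(B)$, I get
\[
f(\mathcal{H}_c) \;=\; f(H_c\cap\mathcal{T}_f) \;\subset\; f(H_c)\cap f(\mathcal{T}_f) \;\subset\; H_c\cap\mathcal{T}_f \;=\; \mathcal{H}_c,
\]
which is the desired forward invariance. The only slightly delicate point is the verification that $\mathcal{T}_f$ is forward invariant; the case $\mathcal{H}_c=\emptyset$ (when $f$ is unicritical, so the statement is vacuous) needs no separate treatment, since the empty set is trivially invariant.
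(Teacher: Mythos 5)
Your proof is correct and follows essentially the same route as the paper: the paper also argues by showing $f(\xi)\in H_c$ and $f(\xi)\in\mathcal{T}_f$ separately and intersecting, just phrased pointwise rather than as set inclusions. One harmless slip: taking images commutes with unions for any map, purely set-theoretically; this has nothing to do with continuity of $f$.
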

\begin{proof}
We assume that $\mathcal{H}_c\not=\emptyset$ and pick $\xi\in\mathcal{H}_c$. Then there exists $c'\in\mathrm{Crit}(f)\setminus\{c\}$ and $j\ge 0$ such that $\xi\in ]f^{j}(c'),\infty[$. It follows that $f(\xi)\in ]f^{j_c+1}(c),\infty[\subset H_c$. Moreover, since $\mathcal{T}_f$ is forward invariant, we conclude that $f(\xi)\in \mathcal{H}_c$.
\end{proof}

%The above lemma immediately implies the following. %which is crucial in our later argument.
\begin{corollary}\label{coro:backward}
The complement $\mathsf{A}^1\setminus \mathcal{H}_c$ is backward invariant. 
\end{corollary}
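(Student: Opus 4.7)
The plan is to deduce this corollary directly from Lemma \ref{lem:forward} by the standard set-theoretic equivalence between forward invariance of a set and backward invariance of its complement. Concretely, I will argue by contrapositive: I take $\xi \in \mathsf{A}^1 \setminus \mathcal{H}_c$ and any preimage $\eta \in f^{-1}(\xi)$, and I want to show $\eta \notin \mathcal{H}_c$. Suppose to the contrary that $\eta \in \mathcal{H}_c$. Lemma \ref{lem:forward} gives $f(\mathcal{H}_c) \subset \mathcal{H}_c$, so $f(\eta) = \xi \in \mathcal{H}_c$, contradicting $\xi \in \mathsf{A}^1 \setminus \mathcal{H}_c$. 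Hence $f^{-1}(\mathsf{A}^1 \setminus \mathcal{H}_c) \subset \mathsf{A}^1 \setminus \mathcal{H}_c$, which is the claimed backward invariance.

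There is essentially no obstacle here; the content of the corollary is entirely captured by the previous lemma, and this statement is recorded separately only because the complement $\mathsf{A}^1 \setminus \mathcal{H}_c$ (rather than $\mathcal{H}_c$ itself) is the object one tracks when controlling preimages of disks later in the paper. I would therefore keep the proof to one short paragraph, noting the tautology $f(A) \subset A \iff f^{-1}(X \setminus A) \subset X \setminus A$ applied to $A = \mathcal{H}_c$ inside $X = \mathsf{A}^1$, and then citing Lemma \ref{lem:forward} as the input.
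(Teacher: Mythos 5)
Your proof is correct and is exactly the argument the paper intends: the corollary is stated immediately after Lemma \ref{lem:forward} without a separate proof precisely because it is the standard set-theoretic dual of forward invariance (here applied inside $\mathsf{A}^1$, which is $f$-invariant for a polynomial). Nothing more is needed.
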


If $f$ has a critical point $c$ in $J(f)$, then $f$ is not unicritical and hence $\mathcal{H}_c$ is nonempty. If in addition $c$ is the unique critical point in $J(f)$ and $f$ has no wandering Julia points in $\mathrm{H}^1$, then $\mathcal{H}_c\setminus\ ]\xi_f,\infty[$ is bounded away from the type I points:
%\begin{lemma}
%Let $f\in K[z]$ be a tame and nonsimple polynomial of degree at least $2$. Suppose that $J(f)\cap\mathrm{Crit}(f)=\{c\}$. Then the closure of $\mathcal{H}_c$ is contained in $\mathsf{H}^1$. In particular, any end point of $\mathcal{H}_c\setminus\{\infty\}$ is either a vertex of $\mathcal{T}_f$ or a point in $J(f)\cap\mathsf{H}^1$.
%\end{lemma}
%\begin{proof}
%\end{proof}
%Observe that $\widehat{\mathcal{H}}\subset\mathsf{H}^1\cup\{\infty\}$. Moreover, by the construction of $\mathcal{T}_f$ and Propositions \ref{prop:Julia} and \ref{prop:Fatou}, each endpoint of $\widehat{\mathcal{H}}\setminus\{\infty\}$ is either a vertex of $\mathcal{T}_f$  or a type II preperiodic point of $f$. 

%We now state two lemmas about the set $\widehat{\mathcal{H}}$. The first lemma asserts that $\widehat{\mathcal{H}}\setminus(\xi_f,\infty]$ is bounded away from the type I points, and the second lemma asserts that $f(\widehat{\mathcal{H}})\subset\widehat{\mathcal{H}}$.

%If in addition $f$ has no wandering Julia points in $\mathrm{H}^1$, 

\begin{proposition}\label{lem:bound}
Let $f\in K[z]$ be a tame and nonsimple polynomial of degree at least $2$. Suppose that $J(f)\cap\mathrm{Crit}(f)=\{c\}$ and all points in $J(f)\cap\mathsf{H}^1$ are nonwandering. Then there exists $\alpha:=\alpha(f)>0$ such that for any $\xi\in\mathcal{H}_c\setminus\ ]\xi_f,\infty[$, 
$$0\le \rho(\xi_f,\xi)\le\alpha.$$
\end{proposition}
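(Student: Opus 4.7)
The plan is to convert the desired bound on $\rho$ into a uniform lower bound on diameters and then use the no-wandering hypothesis to control the orbits of the non-$c$ critical points. Any $\xi \in \mathcal{H}_c \setminus\ ]\xi_f,\infty[$ lies in $\mathsf{H}^1$ and satisfies $\xi \preceq \xi_f$, since $\mathcal{T}_f \setminus\ ]\xi_f,\infty[ \subset \mathrm{Hull}(J(f))$ and $J(f) \subset \mathsf{P}^1 \setminus \mathsf{B}(\overrightarrow{\xi_f\infty})$. The formula for $\rho$ then yields $\rho(\xi_f,\xi)=\log_p(R_f/\mathrm{diam}(\xi))$, so the proposition is equivalent to producing some $\epsilon>0$ with $\mathrm{diam}(\xi)\ge\epsilon$ for every such $\xi$.

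First I would describe which $\xi$ appear. Since $\mathcal{H}_c = H_c \cap \mathcal{T}_f$ with $H_c = \bigcup_{j\ge 0} f^j(\mathcal{S}_c)$, every such $\xi$ lies on a segment of the form $[f^j(c'),f^j(c'')]$ or $[f^j(c'),\xi_f]$ (intersected with $\mathcal{T}_f$), where $c',c'' \in \mathrm{Crit}(f)\setminus\{c\}$ and $j\ge 0$. The hypothesis $J(f)\cap\mathrm{Crit}(f)=\{c\}$ forces each non-$c$ critical point, and hence each of its iterates $f^j(c')$, into $F(f)$. Let $V_{j,c'}$ be the Fatou component containing $f^j(c')$. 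Then on the relevant sub-segment in $\mathcal{T}_f$, the minimum of $\mathrm{diam}$ is attained at the point where the segment exits $V_{j,c'}$ into $\mathrm{Hull}(J(f))$ toward $\xi_f$; call this point the \emph{top} $\eta_{j,c'}$ of $V_{j,c'}$ relative to $\xi_f$. Thus it suffices to bound $\mathrm{diam}(\eta_{j,c'})$ from below uniformly in $j$ and $c'$.

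Then I would invoke Lemma \ref{lem:nonwandering}: no strictly wandering Fatou component exists. Consequently, the orbit of each non-$c$ critical point eventually enters the basin of a periodic cycle---either attracting or indifferent in $K$, or type II repelling in $\mathsf{H}^1$. In the attracting/indifferent case, after a finite transient the orbit visits only finitely many Fatou components, giving a finite list of tops, hence a uniform diameter lower bound. In the type II repelling case, the orbit weakly converges to a cycle of type II periodic points, which sit at fixed finite $\rho$-distance from $\xi_f$, and one argues that the tops $\eta_{j,c'}$ accumulate at the cycle. Taking the maximum of $\rho(\xi_f,\eta_{j,c'})$ over the finite set $\mathrm{Crit}(f)\setminus\{c\}$ and over the finitely many preperiodic iterates then yields the desired $\alpha$.

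The main obstacle will be the type II repelling basin case: verifying that the tops $\eta_{j,c'}$ remain eventually $\rho$-close to the cycle rather than escaping into arbitrarily deep parts of $\mathrm{Hull}(J(f))$. This rests on the tameness of $f$, which controls the tangent map $T_{\xi^*}f$ at each repelling periodic point $\xi^*$ and restricts which tangent directions at $\xi^*$ can support Fatou components in the basin, together with the weak convergence of the critical orbit to the cycle, which forces the corresponding tops into a bounded $\rho$-neighborhood of the cycle.
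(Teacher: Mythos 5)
Your overall skeleton is close to the paper's: reduce to a lower bound on diameters, decompose by the critical point $c'\ne c$ and iterate index $j$, identify the minimum of $\mathrm{diam}$ on the relevant segment with a ``top'' boundary point, and take a finite max. However, the step you flag as ``the main obstacle'' is in fact where your argument has a real gap, and your proposed workaround (controlling tangent maps at repelling periodic points via tameness) is both unnecessary and not obviously correct.

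The resolution is much simpler than you anticipate, and you actually have all the ingredients in hand. Your top $\eta_{j,c'}$ is the singleton boundary $\partial V_{j,c'}$ of a bounded Fatou component. Since $V_{j,c'}=f^j(V_{0,c'})$, we have $\eta_{j,c'}=f^j(\xi_{c'})$ where $\xi_{c'}:=\partial V_{0,c'}$ is the boundary of the Fatou component containing $c'$ itself. The crucial observation, which you do not make, is that $\xi_{c'}\in J(f)\cap\mathsf{H}^1$, so the no-wandering hypothesis applies \emph{directly to the tops}: $\xi_{c'}$ is eventually periodic, hence $\{f^j(\xi_{c'})\}_{j\ge 0}$ is a \emph{finite} set, and $\alpha_{c'}:=\max_{j\ge 0}\rho(\xi_f,f^j(\xi_{c'}))<\infty$. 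There is no accumulation issue to worry about, no case split into attracting/indifferent versus type~II repelling basins, and no need for tameness to constrain tangent maps. This is exactly the paper's Case~1. (Lemma \ref{lem:repelling} is invoked only to identify what the cycle \emph{is}, not to obtain finiteness.)

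Separately, the case $c'\in\Omega_\infty(f)$ (the paper's Case~2) does not fit your ``top exits $V_{j,c'}$ into $\mathrm{Hull}(J(f))$'' picture, because then $V_{j,c'}=\Omega_\infty(f)$ already contains $\xi_f$ and the whole tree $\mathcal{T}_f$; there is no singleton Julia boundary to serve as $\eta_{j,c'}$. The paper handles this by taking $\xi_{c'}$ to be the first point of $\mathcal{T}_f$ on $]c',\infty[$ and showing, via the dynamical sequence of a Julia point below $\xi_{c'}$, that there is a $j_0$ with $f^{j_0}(\xi_{c'})\in[\xi_f,f(\xi_f)[$; hence only $j<j_0$ can produce an endpoint $\eta$ below $\xi_f$, and again a finite maximum suffices. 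Your ``attracting case, finite transient'' heuristic gestures at this but does not actually supply the needed termination argument.
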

\begin{proof}
Pick any $\xi\in\mathcal{H}_c\setminus\ ]\xi_f,\infty[$. Let $\eta\in \mathcal{H}_c\setminus\ ]\xi_f,\infty[$ be an endpoint of $\mathcal{H}_c\setminus\ ]\xi_f,\infty[$ such that $\eta\not=\xi_f$ and $\xi\in [\eta, \xi_f]$. Note that there exists $c'\in\mathrm{Crit}(f)\setminus\{c\}$ and $j\ge 0$ such that $[\eta, \xi_f]\subset[f^{j}(c'), \xi_f]$. Since $c$ is the unique critical point of $f$ in $J(f)$, we have $c'\in F(f)$. Then denote by $\Omega\subset\mathsf{P}^1$ the Fatou component of $f$ containing $c'$.

If $\Omega\not=\Omega_\infty(f)$, then $c\in\mathcal{K}(f)\setminus J(f)$. By \eqref{equ:J}, the boundary $\partial\Omega$ is a singleton, denoted by $\{\xi_{c'}\}$. By Proposition \ref{lem:repelling}, the point $\xi_{c'}$ is eventually mapped to a type II  repelling cycle.  Set 
$$\alpha_{c'}:=\max\{\rho(\xi_f,f^\ell(\xi_{c'})): \ell\ge 0\}.$$
It follows that $0<\alpha_{c'}<\infty$ and $0<\rho(\xi_f,\eta)\le\rho(\xi_f,f^j(\xi_{c'}))\le\alpha_{c'}$.

If $\Omega=\Omega_\infty(f)$, let $\xi_{c'}\in\mathcal{T}_f$ be the point such that $]c',\infty[ \ \cap\mathcal{T}_f=[\xi_{c'},\infty[$. It follows that $f^j(\xi_{c'})\preceq\eta$. Consider a point $\xi'\prec \xi_{c'}$ in $J(f)$. Then there exists $1\le j_0<\infty$ such that $\xi_{c'}\in [L_{j_0}(\xi'), L_{j_0-1}(\xi')[$, where $\{L_n(\xi')\}_{n\ge 0}$ is the dynamical sequence for $\xi'$. It follows that $f^{j_0}(\xi_{c'})\in[\xi_f,f(\xi_f)[$ and hence $0\le j<j_0$. Set 
$$\alpha_{c'}:=\max\{\rho(\xi_f,f^\ell(\xi_c)): 0\le\ell\le j_0\}.$$
We conclude that $0<\alpha_{c'}<\infty$ and  $0<\rho(\xi_f,\eta)\le \rho(\xi_f,f^j(\xi_{c'}))<\alpha_{c'}$.

Now set 
$$\alpha:=\max\{\alpha_{c'}: c'\in\mathrm{Crit}(f)\setminus\{c\}\}.$$ 
Then we have $0<\rho(\xi_f,\eta)\le\alpha$. It follows immediately that $0\le \rho(\xi_f,\xi)\le\alpha$.
\end{proof}

\begin{corollary}
Let $f\in K[z]$ be a polynomial as in Proposition \ref{lem:bound}. Then the closure of $\mathcal{H}_c$ is contained in $\mathsf{H}^1$. In particular, any endpoint of $\mathcal{H}_c$ in $\mathsf{A}^1$ is either a vertex of $\mathcal{T}_f$ or a point in $J(f)\cap\mathsf{H}^1$.
\end{corollary}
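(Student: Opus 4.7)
The plan is to deduce both assertions from Proposition \ref{lem:bound} together with the explicit tree structure
$\mathcal{T}_f=(\mathrm{Hull}(J(f))\cup\ ]\xi_f,\infty[\,)\setminus J(f)$. Since $\mathcal{H}_c$ contains (a cofinal part of) the segment $]\xi_f,\infty[$ whenever $f$ has a critical point escaping to $\infty$, the closure under consideration must be the closure in $\mathsf{A}^1$; this is consistent with the ``in particular'' clause, which speaks of endpoints in $\mathsf{A}^1$. So I split
$$\mathcal{H}_c=\mathcal{H}_c'\sqcup\mathcal{H}_c'',\quad \mathcal{H}_c':=\mathcal{H}_c\setminus\ ]\xi_f,\infty[,\quad \mathcal{H}_c'':=\mathcal{H}_c\cap\ ]\xi_f,\infty[,$$
and close each piece in $\mathsf{A}^1$ separately.

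For $\mathcal{H}_c'$ the conclusion of Proposition \ref{lem:bound} places it inside the closed hyperbolic ball $\overline{B}_\rho(\xi_f,\alpha):=\{\xi\in\mathsf{H}^1:\rho(\xi_f,\xi)\le\alpha\}$. I would then invoke the standard fact that such a Berkovich hyperbolic ball is weakly compact in $\mathsf{P}^1$ and entirely contained in $\mathsf{H}^1$: any $\xi\in\overline{B}_\rho(\xi_f,\alpha)$ has $\mathrm{diam}(\xi)\ge p^{-\alpha}R_f>0$, which together with the formula for $\rho$ prevents weak limits from collapsing to a point of type I or to a type IV point of vanishing diameter. For $\mathcal{H}_c''$ the closure in $\mathsf{A}^1$ lies in the segment $[\xi_f,\infty[$, whose every point is of type II or III. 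The union of the two closures yields $\overline{\mathcal{H}_c}^{\mathsf{A}^1}\subset\mathsf{H}^1$.

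For the endpoint classification, let $\xi\in\mathsf{A}^1$ be an endpoint of $\mathcal{H}_c$; by the first part, $\xi\in\mathsf{H}^1$. If $\xi\in\mathcal{T}_f$, I would argue that $\xi$ must be a vertex of $\mathcal{T}_f$: by Lemma \ref{lem:deg constant} the local degree of $f$ is constant on each open edge of $\mathcal{T}_f$, and $H_c$, being the convex hull of $\bigcup_{j\ge 0}f^j(\mathrm{Crit}(f)\setminus\{c\})\cup\{\infty\}$, branches or terminates only at type II joins of these orbit points, all of which lie in the grand orbit of the generators $\{\zeta_0,\ldots,\zeta_{q-1}\}$ and thus among the vertices of $\mathcal{T}_f$. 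Consequently $\mathcal{H}_c=H_c\cap\mathcal{T}_f$ meets each open edge of $\mathcal{T}_f$ either entirely or not at all, so an endpoint of $\mathcal{H}_c$ that still lies in $\mathcal{T}_f$ is necessarily a vertex. If instead $\xi\notin\mathcal{T}_f$, then $\xi\in\overline{\mathcal{T}_f}^{\mathsf{A}^1}\setminus\mathcal{T}_f$, and the explicit description of $\mathcal{T}_f$ forces $\xi\in J(f)$, hence $\xi\in J(f)\cap\mathsf{H}^1$.

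The hard part will be the rigorous verification of the ``all-or-nothing'' behavior of $\mathcal{H}_c$ on open edges of $\mathcal{T}_f$ used in the endpoint classification; once this is granted, the containment $\overline{\mathcal{H}_c}\subset\mathsf{H}^1$ is essentially a bookkeeping consequence of Proposition \ref{lem:bound} and the weak-compactness of hyperbolic balls.
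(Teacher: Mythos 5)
You correctly observe that the closure is to be taken in $\mathsf{A}^1$, and your treatment of the first assertion is sound: by Proposition \ref{lem:bound}, every point of $\mathcal{H}_c\setminus\ ]\xi_f,\infty[$ satisfies $\mathrm{diam}(\cdot)\ge R_f p^{-\alpha}>0$, so weak limits of such points cannot be of type I, while the closure of $\mathcal{H}_c\cap\ ]\xi_f,\infty[$ sits inside $[\xi_f,\infty[\subset\mathsf{H}^1$.

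For the endpoint classification, however, the claim you defer --- that $\mathcal{H}_c$ meets each open edge of $\mathcal{T}_f$ entirely or not at all --- is the real content, and the justification you sketch does not establish it. You assert that $H_c$ ``branches or terminates only at type II joins of these orbit points, all of which lie in $\mathrm{GO}(\{\zeta_0,\ldots,\zeta_{q-1}\})$,'' but $H_c$ terminates at the type I orbit points $f^j(c')$ (and at $\infty$), not at type II joins, and a join $f^i(c')\vee f^j(c'')$ need not lie in $\mathcal{T}_f$ at all, let alone in the grand orbit of the generators. What actually controls where $\mathcal{H}_c$ stops as one moves down an edge is the entry point of each segment $[f^j(c'),\infty]$ into $\mathcal{T}_f$; one must verify that this entry point is $f^j(\xi_{c'})$, which is a vertex of $\mathcal{T}_f$ when $c'\in\Omega_\infty(f)$ and a point of $J(f)\cap\mathsf{H}^1$ when $c'$ lies in a bounded Fatou component --- precisely the dichotomy already set up in the proof of Proposition \ref{lem:bound}. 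The nontrivial step is that $[f^j(c'),f^j(\xi_{c'})[$ stays outside $\mathcal{T}_f$ for all $j\ge1$, which follows from the forward invariance of the Fatou set and the fact that $\mathsf{B}(\overrightarrow{\xi_{c'}c'})$ is disjoint from $J(f)$; Lemma \ref{lem:deg constant}, which you invoke, plays no role here. So the gap you flag is genuine, and the way to close it is the case analysis from the proof of Proposition \ref{lem:bound}, not the ``type II joins'' heuristic.
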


\section{Lyapunov exponents}\label{sec:Lya}

In this section, we state an approximation of the (lower) Lyapunov exponents via the diameters of geometric sequences introduced in previous section. We let $f\in K[z]$ be a polynomial of degree at least $2$ and let $\mu$ be an $f$-invariant Radon probability measure on the Julia set $J(f)$. Recall \eqref{L-} and \eqref{L}.

For any $z\in K$, the chordal derivative of $f$ is 
\begin{equation}\label{equ:der-0}
f^\#(z)=\frac{\max(1,|z|)^2}{\max(1,|f(z)|)^2}\cdot |f'(z)|.
\end{equation}
It extends continuously to $\mathsf{A}^1$ as
\begin{equation}\label{equ:np}
f^\#(\xi)=\frac{\max(1,||z||_\xi)^2}{\max(1,||f||_\xi)^2}\cdot ||f'||_\xi.
\end{equation}
The chordal derivative satisfies the chain rule, see \cite[Lemma 1]{Jacobs19}; in particular, 
$$(f^n)^\#(\xi)=\prod_{j=0}^{n-1}f^\#(f^j(\xi)).$$
Moreover, observe that $\log_p f^\#$ is $\mu$-integrable if and only if $\log_p ||f'||$ is $\mu$-integrable, since $J(f)\subset\mathsf{P}^1\setminus\mathsf{B}(\overrightarrow{\xi_f\infty})$.
For more details, we refer \cite[Section 2.1]{Jacobs19}. 

\begin{lemma}
%Applying the argument in the proof of \cite[Proposition 1]{Jacobs19}, we conclude that 
Let $f\in K[z]$ be a polynomial of degree at least $2$ and let $\mu$ be an $f$-invariant measure. If $\log_p f^\#$ is $\mu$-integrable, then
\begin{equation}\label{equ:L=}
L(f,\mu)=\int_{\mathsf{P}^1}\log_p||f'||d\mu.
\end{equation}
\end{lemma}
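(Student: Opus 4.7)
The plan is to expand $\log_p f^\#$ using the explicit formula (\ref{equ:np}) and to cancel the two extraneous terms by exploiting the $f$-invariance of $\mu$.

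First, taking $\log_p$ of (\ref{equ:np}) I would decompose
$$\log_p f^\#(\xi) \;=\; \log_p \|f'\|_\xi \;+\; 2\log_p\max(1,\|z\|_\xi) \;-\; 2\log_p\max(1,\|f\|_\xi).$$
To integrate this identity against $\mu$, I need each term on the right to be $\mu$-integrable. Since $\mu$ is supported on $J(f)\subset \mathsf{P}^1\setminus\mathsf{B}(\overrightarrow{\xi_f\infty})$ and the latter is a compact subset of $\mathsf{A}^1$ in the weak topology, the continuous functions $\xi\mapsto\|z\|_\xi$ and $\xi\mapsto\|f\|_\xi$ are bounded on $J(f)$. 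Hence $\log_p\max(1,\|z\|_\xi)$ and $\log_p\max(1,\|f\|_\xi)$ are bounded and thus $\mu$-integrable; combined with the hypothesis that $\log_p f^\#$ is $\mu$-integrable, this also forces $\log_p\|f'\|$ to be $\mu$-integrable, so that the right-hand side of (\ref{equ:L=}) is well-defined.

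The crux is the identity $\|z\|_{f(\xi)} = \|f\|_\xi$, which is immediate from how $f$ acts on seminorms: for any $\psi\in K[z]$, $\|\psi\|_{f(\xi)} = \|\psi\circ f\|_\xi$, applied to $\psi(z)=z$. Using this and the $f$-invariance $f_*\mu=\mu$, a change-of-variables gives
$$\int_{\mathsf{P}^1}\log_p\max(1,\|f\|_\xi)\,d\mu(\xi) \;=\; \int_{\mathsf{P}^1}\log_p\max(1,\|z\|_{f(\xi)})\,d\mu(\xi) \;=\; \int_{\mathsf{P}^1}\log_p\max(1,\|z\|_\eta)\,d\mu(\eta),$$
so the two middle terms in the decomposition cancel upon integration.

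Integrating the decomposition against $\mu$ and applying this cancellation then yields
$$L(f,\mu) \;=\; \int_{\mathsf{P}^1}\log_p f^\#\,d\mu \;=\; \int_{\mathsf{P}^1}\log_p\|f'\|\,d\mu,$$
which is (\ref{equ:L=}). The only real technical point is verifying integrability of the auxiliary pieces so that the additive splitting of the integral is legitimate; this reduces to the compactness of $J(f)$ inside $\mathsf{A}^1$ and the continuity of $\xi\mapsto\|z\|_\xi$, both of which are standard. Everything else is a direct manipulation.
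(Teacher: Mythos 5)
Your proof is correct, but it follows a genuinely different route from the paper's. You expand $\log_p f^\#$ directly via~\eqref{equ:np} into the three terms $\log_p\|f'\|_\xi + 2\log_p\max(1,\|z\|_\xi) - 2\log_p\max(1,\|f\|_\xi)$, observe the key identity $\|z\|_{f(\xi)}=\|f\|_\xi$ (the defining property of the pushforward of a seminorm under a polynomial), and use $f$-invariance of $\mu$ to cancel the two correction terms by a change of variables. The paper instead conjugates $f$ by an affine map $A_0$ that carries the base point $\xi_f$ to the Gauss point $\xi_G$; for the conjugate $g=A_0^{-1}\circ f\circ A_0$ both $\|z\|_\xi\le 1$ and $\|g\|_\xi\le 1$ on $J(g)$, so that $g^\#(\xi)=\|g'\|_\xi$ exactly, and the claim is transported back through the conjugation invariance $L(f,\mu)=L(g,(A_0^{-1})_\ast\mu)$ cited from Jacobs. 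What your approach buys is self-containment and directness: you avoid the affine-normalization device and the external lemma on conjugation invariance of the Lyapunov exponent. What the paper's approach buys is that the correction factor vanishes pointwise (not only in the integral), which is a slick normalization. Your integrability check --- that $\|z\|_\xi$ and $\|f\|_\xi$ are bounded on $J(f)\subset\mathsf{P}^1\setminus\mathsf{B}(\overrightarrow{\xi_f\infty})$, a compact set, so the auxiliary logarithmic terms are bounded hence $\mu$-integrable, and $\log_p\|f'\|$ inherits integrability from $\log_p f^\#$ --- is exactly what is needed to legitimize splitting the integral, and is the analogue of the implicit step in the paper's argument.
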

\begin{proof}
For any affine map $A\in K[z]$, the measure $(A^{-1})_\ast\mu$ is an $(A^{-1}\circ f\circ A)$-invariant measure on $J(A^{-1}\circ f\circ A)$. Moreover, direct computations shows 
   \begin{equation}\label{equ:L-conj}
   L(f,\mu)=L(A^{-1}\circ f\circ A,  (A^{-1})_\ast\mu)
   \end{equation}
   (see the proof of  \cite[Lemma 2]{Jacobs19}).  Now recall the base point $\xi_f$ of $f$ and consider the affine map $A_0\in K[z]$ such that $A_0^{-1}(\xi_f)$ is the Gauss point $\xi_G$. Set $g:=A_0^{-1}\circ f\circ A_0$ is $\xi_G$.  Then $\xi_g=\xi_G$. It follows that for any $\xi\in J(g)$, we have $\xi\preceq\xi_G$ and $g(\xi)\preceq\xi_G$. It follows that $||z||_\xi\le1$ and $||g||_\xi\le1$ for any $\xi\in J(g)$. Thus from \eqref{equ:np},
   $$g^\#(\xi)=\frac{\max(1,||z||_\xi)^2}{\max(1,||g||_\xi)^2}\cdot ||g'||_\xi=||g'||_\xi,$$ 
   and hence 
   \begin{equation}\label{equ:L-conj-1}
   L(g, (A_0)^{-1}_\ast\mu)=\int_{\mathsf{P}^1}\log||g'||d (A_0)^{-1}_\ast\mu.
    \end{equation}
   Note that $||g'||=||f'\circ A||$. From \eqref{equ:L-conj} and \eqref{equ:L-conj-1}, we obtain \eqref{equ:L=}.
\end{proof}

%Now let us further assume that $f$ is nonsimple, and hence $J_I(f)\not=\emptyset$. 
Denote by $\xi_0:=0\vee\xi_f$ and let $r=\mathrm{diam}(\xi_0
)$. It follows that $J(f)\subset\overline{D}(0,r)$. Then for any $\xi\in J(f)$, we have 
\begin{equation}\label{equ:z}
1\le \max(1,||z||_\xi)^2\le\max\{1,r^2\}.
\end{equation}
Moreover, by the invariance of $J(f)$, we have that $f^n(\xi)\in J(f)$ for all $n\ge 1$. Then 
\begin{equation}\label{equ:nz}
1\le \max(1,||f^n||_\xi)^2\le\max\{1,r^2\}.
\end{equation}
From \eqref{equ:z} and \eqref{equ:nz}, we conclude that 
\begin{equation}\label{equ:ration-b}
\frac{1}{\max\{1,r^2\}}\le\frac{\max(1,||z||_\xi)^2}{\max(1,||f^n||_\xi)^2}\le\max\{1,r^2\}.
\end{equation}
Noting that from \eqref{equ:np}, for all $n\ge 1$, 
\begin{equation*}
(f^n)^\#(\xi)=\frac{\max(1,||z||_\xi)^2}{\max(1,||f^n||_\xi)^2}\cdot ||(f^n)'||_\xi
\end{equation*}
and applying \eqref{equ:ration-b},
we have that  for any $\xi\in J(f)$, %the critical value $w:=f(c_0)$, 
\begin{align*}
\liminf_{n\to\infty}\frac{1}{n}\log_p\left(\frac{1}{\max\{1,r^2\}}\cdot ||(f^n)'||_\xi\right)&\le L_f^-(\xi)=\liminf_{n\to\infty}\frac{1}{n}\log_p||(f^n)^\#||_\xi\\
&\le\liminf_{n\to\infty}\frac{1}{n}\log_p\left(\max\{1,r^2\}\cdot ||(f^n)'||_\xi\right);
\end{align*}
which implies that
\begin{equation}\label{equ:L-}
L_f^-(\xi)=\liminf_{n\to\infty}\frac{1}{n}\log_p ||(f^n)'||_\xi.
\end{equation}
Similar argument shows that if $L_f(\xi)$ exists, then
\begin{equation}\label{equ:L}
L_f(\xi)=\lim_{n\to\infty}\frac{1}{n}\log_p ||(f^n)'||_\xi.
\end{equation} 

Thus to bound $L_f^-(\xi)$ and $L_f(\xi)$, we only need to figure out  $||(f^n)'||_\xi$, which, for type I points, is the aim of next lemma. If in addition $f$ is tame, for a point $x\in J(f)$, recall that $\{G_\ell(x)\}_{\ell\ge 0}$ is the geometrical sequence of $x$ and $q\ge 1$ is the number of the generators of the geometric sequences of $f$.

%\subsection{Approximation}

%The following result claims that if $z$ is not an iterated preimage of a critical point, then $|(f^n)'(z)|$ is the ratio of diameters of sufficiently large level points in geometric sequences $\{G_\ell(z)\}_{\ell\ge 0}$ and $\{G_\ell(f^n(z))\}_{\ell\ge 0}$. Recall that $q\ge 1$ is the number of the generators of the geometric sequences of $f$.

\begin{proposition}\label{lem:ratio}
Let $f\in K[z]$ be a tame and nonsimple polynomial of degree at least $2$. For $z\in J_I(f)$, if $f^\ell(z)\not\in\mathrm{Crit}(f)$ for all $\ell\ge 0$, then for any $n\ge 1$, there exists $N:=N(n)>qn$ such that for any $m\ge N$, 
$$|(f^n)'(z)|=\frac{\mathrm{diam}(G_{m-qn}(f^n(z)))}{\mathrm{diam}(G_m(z))}.$$
\end{proposition}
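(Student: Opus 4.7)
The plan is to apply the chain rule $|(f^n)'(z)|=\prod_{k=0}^{n-1}|f'(f^k(z))|$ and identify each factor $|f'(f^k(z))|$ as a ratio of diameters via Lemma \ref{lem:basic}(2), after which the relation $f(G_m(\xi))=G_{m-q}(f(\xi))$ from Trucco's analysis makes the product telescope to the claimed identity.

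First, I exploit the non-criticality hypothesis. Since $z\in J_I(f)$ with $z\notin\mathrm{Crit}(f)$, the local degree of $f$ at $z$ equals $1$ and $f$ is injective on a sufficiently small classical disk around $z$. The type II vertices $G_m(z)$ lie on the geodesic $[\xi_f,z]$ and converge to the type I point $z$, so the associated classical disks $D_{G_m(z)}\subset K$ satisfy $\mathrm{diam}(G_m(z))\to 0$ and eventually lie inside any prescribed neighborhood of $z$. Hence there exists $m_0$ with $f$ injective on $D_{G_m(z)}$ for every $m\ge m_0$. Repeating this argument at each of the finitely many non-critical points $f(z),\ldots,f^{n-1}(z)$ yields thresholds $m_1,\ldots,m_{n-1}$, and I take $N>qn$ large enough that $m-qk\ge m_k$ for all $0\le k\le n-1$ whenever $m\ge N$.

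For such $m$ and each $0\le k\le n-1$, Lemma \ref{lem:basic}(2) applied on $D_{G_{m-qk}(f^k(z))}$ gives
\[
|f'(f^k(z))|=\frac{\mathrm{diam}\bigl(f(D_{G_{m-qk}(f^k(z))})\bigr)}{\mathrm{diam}\bigl(D_{G_{m-qk}(f^k(z))}\bigr)}.
\]
Trucco's identity $f(G_m(\xi))=G_{m-q}(f(\xi))$ (valid since $m-qk\ge q$ by the choice of $N$) identifies the Berkovich type II point attached to the image disk as $G_{m-q(k+1)}(f^{k+1}(z))$, so the numerator is $\mathrm{diam}(G_{m-q(k+1)}(f^{k+1}(z)))$ and the denominator is $\mathrm{diam}(G_{m-qk}(f^k(z)))$. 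Multiplying these equalities over $k=0,\ldots,n-1$ telescopes to
\[
|(f^n)'(z)|=\frac{\mathrm{diam}(G_{m-qn}(f^n(z)))}{\mathrm{diam}(G_m(z))}.
\]

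The only real obstacle is ensuring injectivity of $f$ simultaneously along the entire orbit segment $z,f(z),\ldots,f^{n-1}(z)$; this is why $N$ must depend on $n$ (and on $z$). Since $n$ is fixed and the orbit points are finitely many non-critical classical points, the maximum of the finitely many thresholds is harmless — but one must be careful that the relevant comparison at stage $k$ is $m-qk\ge m_k$, not simply $m\ge m_k$, since the index in the geometric sequence shifts by $q$ at each application of $f$. Everything else is a routine application of results already stated in the paper.
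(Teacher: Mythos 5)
Your proof is correct and is essentially the paper's argument re-expressed via the chain rule: the paper shows that $f^n$ is injective on the disk $\overline{D}(z,r_m)$ by checking $G_m(z)\notin\mathcal{R}_n:=\bigcup_{j=0}^{n-1}f^{-j}(\mathcal{R}_f)$, applies Lemma~\ref{lem:basic}(2) once to the iterate $f^n$, and invokes the $n$-step identity $f^n(G_m(z))=G_{m-qn}(f^n(z))$, whereas you apply the same lemma and the single-step Trucco identity at each of the $n$ orbit points and telescope. Your requirement that $G_{m-qk}(f^k(z))\notin\mathcal{R}_f$ for each $k$ is the same condition as the paper's $G_m(z)\notin\mathcal{R}_n$, since $f^k(G_m(z))=G_{m-qk}(f^k(z))$.
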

\begin{proof}
Consider the ramification locus $\mathcal{R}:=\mathcal{R}_f$ of $f$, and for any fixed $n\ge 1$, set 
$$\mathcal{R}_n:=\bigcup_{0\le j\le n-1}f^{-j}(\mathcal{R}).$$
Note that $z$ is not contained in $\mathcal{R}_n$; for, otherwise, we would have $f^j(z)\in\mathrm{Crit}(f)$ for some $j\ge 0$. 
  Then there exists $N>qn$ such that 
$]z,G_{N-1}(z)[\ \cap\mathcal{R}_n=\emptyset$.
It follows that $\deg_{G_m(z)}f^n=1$ for any $m\ge N$. Hence, on the closed disk $\overline{D}(z,r_m)$ corresponding to the point $G_m(z)$, $f$ is one-to-one. By Lemma \ref{lem:basic}(2), we conclude that 
$$|(f^n)'(z)|=\frac{\mathrm{diam}(f^n(\overline{D}(z,r_m)))}{\mathrm{diam}(\overline{D}(z,r_m))}.$$
Observe that $f^n(\overline{D}(z,r_m)$ is the closed disk corresponding to the point $f^n(G_m(z))$. The conclusion follows by noting that $f^n(G_m(z))=G_{m-qn}(f^n(z))$.
\end{proof}

Recall that $\rho$ is the metric on $\mathsf{H}^1$.  Proposition \ref{lem:ratio} implies the following expression of the lower Lyapunov exponent.
\begin{corollary}\label{coro:distance}
Under the assumptions in Proposition \ref{lem:ratio}, for $n\ge 1$, there exists $N:=N(n)>qn$ such that for any $m\ge N$, 
\begin{equation}\label{equ:derivative}
\log_p|(f^n)'(z)|=\rho(\xi_f,G_m(z))-\rho(\xi_f,G_{m-qn}(f^n(z))).
\end{equation}
In particular,
\begin{equation}\label{equ:L-difference}
L_f^-(z)=\liminf_{n\to\infty}\frac{1}{n}\left(\rho(\xi_f,G_m(z))-\rho(\xi_f,G_{m-qn}(f^n(z)))\right).
\end{equation}
\end{corollary}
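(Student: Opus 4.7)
The plan is that Corollary \ref{coro:distance} follows almost mechanically from Proposition \ref{lem:ratio} together with the expression \eqref{equ:L-}: one only needs to convert the ratio of diameters appearing in Proposition \ref{lem:ratio} into a difference of $\rho$-distances from $\xi_f$.

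First I would take $\log_p$ of both sides of the identity
$$|(f^n)'(z)|=\frac{\mathrm{diam}(G_{m-qn}(f^n(z)))}{\mathrm{diam}(G_m(z))}$$
guaranteed by Proposition \ref{lem:ratio} for any $m\ge N(n)$. The key observation is that every term $G_m(z)$ of the geometric sequence satisfies $G_m(z)\preceq \xi_f$ by definition of the geometric sequence (it lies in $[\xi_f,z]$); in particular $\xi_f\vee G_m(z)=\xi_f$. Substituting into the definition
$$\rho(\xi_1,\xi_2)=2\log_p\mathrm{diam}(\xi_1\vee\xi_2)-\log_p\mathrm{diam}(\xi_1)-\log_p\mathrm{diam}(\xi_2)$$
yields
$$\rho(\xi_f,G_m(z))=\log_p\mathrm{diam}(\xi_f)-\log_p\mathrm{diam}(G_m(z)),$$
and similarly for $G_{m-qn}(f^n(z))$, noting that $f^n(z)\in J(f)$ as well so its geometric sequence also lies below $\xi_f$. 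Subtracting the two expressions and comparing with the logarithm of the Proposition \ref{lem:ratio} identity gives exactly \eqref{equ:derivative}.

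For the second formula \eqref{equ:L-difference}, I would apply \eqref{equ:L-}: since $z$ is a type I point we have $||(f^n)'||_z=|(f^n)'(z)|$, so
$$L_f^-(z)=\liminf_{n\to\infty}\frac{1}{n}\log_p|(f^n)'(z)|.$$
For each $n\ge 1$ choose any $m=m(n)\ge N(n)$ (the value of $m(n)$ does not matter, since \eqref{equ:derivative} holds for every such $m$). Substituting \eqref{equ:derivative} into the displayed formula for $L_f^-(z)$ gives \eqref{equ:L-difference}. No genuine difficulty is expected here; the only point requiring a brief comment is the independence of the right-hand side of \eqref{equ:L-difference} from the particular choice of $m=m(n)\ge N(n)$, which is immediate from \eqref{equ:derivative} itself.
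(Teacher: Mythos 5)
Your proof is correct and takes essentially the same route as the paper: both rewrite the diameter ratio from Proposition~\ref{lem:ratio} as a difference of $\rho$-distances from $\xi_f$, using that the geometric-sequence points lie below $\xi_f$, and then appeal to the identity $L_f^-(z)=\liminf_n\frac{1}{n}\log_p\|(f^n)'\|_z$ from \eqref{equ:L-}. The only addition you make is the explicit remark that the right-hand side of \eqref{equ:L-difference} does not depend on the choice of $m\ge N(n)$, a point the paper leaves implicit.
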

\begin{proof}
Let $N$ be as in Proposition \ref{lem:ratio}. Note that for any $m\ge N$, we have $G_m(z)\prec\xi_f$ and $G_{m-nq}(z)\prec\xi_f$. It follows that
$$\rho(\xi_f,G_m(z))=\log_p\frac{\mathrm{diam}(\xi_f)}{\mathrm{diam}(G_m(z))},$$
and 
$$\rho(\xi_f,G_{m-qn}(z))=\log_p\frac{\mathrm{diam}(\xi_f)}{\mathrm{diam}(G_{m-qn}(f^n(z)))}.$$
By Proposition \ref{lem:ratio}, we obtain \eqref{equ:derivative}. 
Then applying  \eqref{equ:L-}, we also obtain \eqref{equ:L-difference}.
\end{proof}
%Thus we immediately have the following. 
%\begin{corollary}\label{coro:distance}
%Under the assumptions in Lemma \ref{lem:ratio}, for $n\ge 1$, there exists $N:=N(n)>qn$ such that for any $m\ge N$, 
%$$\chi_-(f,w)=\liminf_{n\to\infty}\frac{1}{n}\left(\rho(\xi_f,G_m(z))-\rho(\xi_f,G_{m-qn}(f^n(z)))\right).$$
%\end{corollary}

\section{Lower bound of Lyapunov exponents}\label{sec:proof1}

As mentioned in Section \ref{sec:intro}, our proof of Theorem \ref{thm:bound} follows the ideas in Przytycki's proof of \cite[Theorem A]{Przytycki93} in complex setting and in Jacobs' proof of \cite[Theorem 2]{Jacobs19}. In this section, we include these proofs.  Recall that $R:=R_f=\mathrm{diam}(\xi_f)$, $\Xi:=\Xi_f$ as in \eqref{equ:xi}, $\kappa$ as in \eqref{equ:kappa-f} and $D_s(x):=D(x,R\Xi^{-s})\subset K$ for $s>0$ and $x\in D_{\xi_f}$. 

\begin{proof}[Proof of Theorem \ref{thm:bound}]
%We proceed the proof according to whether $\mu(J_I(f))$ is positive or not. 

Let us first show statement $(1)$. Let $E$ be the set of $\xi\in J(f)$ satisfying that $L_f(\xi)$ exists or $L_f(\xi)=-\infty$. The Birkhoff Ergodic Theorem implies that $\mu(E)=1$. Now recall \eqref{equ:L} and fix an arbitrary $\sigma>0$. For every integer  $T\ge 1$, denote by $E(T)$ the subset of $\xi\in E$ satisfying that for every $n\ge 1$, 
\begin{equation}\label{equ:m}
\begin{cases} 
\log_p||(f^n)'||_\xi<n(L_f(\xi)+\sigma)+T \ \ &\text{if}\ \ L_f(\xi)>-\infty\\
\log_p||(f^n)'||_\xi<-n\sigma^{-1}+T  \ \ &\text{if}\ \ L_f(\xi)=-\infty. 
\end{cases}
\end{equation}
Then $\cup_{T\ge 1}E(T)=E$. Since $\mu(J_I(f))>0$, there exists $T_0\ge 1$ such that 
\begin{equation}\label{equ:T_0}
\mu(E(T_0)\cap J_I(f))>0.
\end{equation}

Let $g_{T_0}$ be the function on $\mathsf{P}^1$ such that $g_{T_0}=1$ on $E(T_0)$ and $g_{T_0}=0$ on $\mathsf{P}^1\setminus E(T_0)$. Applying the Birkhoff Ergodic Theorem, we obtain that for $\mu$-almost all $\xi\in\mathsf{P}^1$, the limit $h_{T_0}(\xi):=\lim_{n\to\infty}\frac{1}{n}\sum_{\ell=1}^{n-1}(g_{T_0}(f^\ell(\xi)))$ exists and is nonzero, and hence for $\mu$-almost every point in $E(T_0)\cap J_I(f)$, the corresponding limit exists.

For any $T_0$ such that \eqref{equ:T_0} holds, consider $g_{T_0}$ and pick $x\in E(T_0)\cap J_I(f)$ such that $h_{T_0}(x)$ exists.  As  $T_0$ varies, we conclude such $x$ forms a set $X$ with $\mu(X)=\mu(J_I(f))$. Picking any $x\in X$, we show  $L_f(x)$ is bounded below.  Without loss of generality, we assume $x\in E(T_0)\cap J_I(f)$ for some $T_0$.

Considering the distance $S_n$ of $f^n(x)$ and $\mathrm{Crit}(f)$, we have the following two subcases.

Case 1.1: For every $n\gg1$ and $c\in\mathrm{Crit}(f)$, there exists $s>0$ such that $f^n(x)\not\in D_{sn}(c)$, that is, $S_n>R\Xi^{-sn}$. We claim that 
\begin{equation}\label{equ:s}
L_f(x)\ge-s\log_p\Xi+\kappa.
\end{equation}
Suppose to the contrary that $L_f(x)<-s\log_p+\kappa$. It follows that 
$$\limsup_{n\to\infty}\left(\frac{\log_pp^{-n\kappa}}{n}+L_f(x)-\frac{\log_pS_n}{n}\right)<-\kappa-s\log_p\Xi+\kappa+s\log_p\Xi=0.$$
Then there exists $\tau>0$ such that for every $n\ge 0$,
$$\frac{\tau p^{-n\kappa}|(f^n)'(x)|}{S_n}<1.$$
Applying Proposition \ref{prop:distortion}, we conclude that 
$$f^n(D(x,\tau))\subseteq D(f^n(x),\tau p^{-n\kappa}|(f^n)'(x)|).$$ 
It follows that $f^n(\mathsf{B}(x,\tau))\subseteq\mathsf{B}(f^n(x),\tau p^{-n\kappa}|(f^n)'(x)|)$. Hence $\cup_{n=0}^\infty f^n(\mathsf{B}(x,\tau))$ contains no critical points of $f$. Since $x\in J(f)$, by \cite[Theorem 8.15]{Benedetto19} we conclude $\mathrm{Crit}(f)$ is contained in the exceptional set of $f$. Then by \cite[Theorem 1.19]{Benedetto19}, the polynomial $f$ is conjugate to the degree $d$ monomial and hence is simple, which contradicts the assumption $\mu(J_I(f))>0$.

Case 1.2: There exist $n\gg1$ and $c\in\mathrm{Crit}(f)$ such that for any $s>0$, $f^n(x)\in D_{sn}(c)$. Let us take $s=1$ and apply Lemma \ref{lem:kappa}. Since $x\in J_I(\phi)$, so is the critical point $c$. Let $\theta>0$ be as in Lemma \ref{lem:kappa}. 
%Considering the function $g$ on $\mathsf{P}^1$ such that $g=1$ on $E(m_0)$ and $g=0$ on $\mathsf{P}^1\setminus E(m_0)$, and applying the Birkhoff Ergodic Theorem, we obtain that for $\mu$-almost all $\xi\in\mathsf{P}^1$, the limit $h(\xi):=\lim_{n\to\infty}\frac{1}{n}\sum_{\ell=1}^{n-1}(g(f^\ell(\xi)))$ exists, moreover, for almost every point in $E(m_0)\cap J_I(f)$, the corresponding limit is nonzero. Further assuming that $x$ is such a point in $E(m_0)\cap J_I(f)$, and 
For $\ell\gg 1$, letting $n_\ell<n_{\ell+1}$ be the consecutive integers, depending on $x$, such that $f^{n_\ell}(x)\in E(T_0)\cap J_I(f)$, and considering $h_{T_0}(x)$, we conclude that 
\begin{equation}\label{equ:number}
n_{\ell+1}-n_\ell<\frac{\theta n_\ell}{2}.
\end{equation}
Moreover, letting  $j\ge 0$ and $\beta\ge\theta n$ be as in Lemma \ref{lem:kappa}, we have 
\begin{equation}\label{equ:bb}
|(f^\beta)'(f^j(x))|\ge \Xi^{-2n}p^{n\kappa}.
\end{equation}
 Note that $x\in E(T_0)$. Since $n\gg1$, it follows from \eqref{equ:number} that there exists $0\le t<\theta n/2$ such that $f^{j+t}(x)\in E(T_0)$. From \eqref{equ:bb}, observing that 
$$\Xi^{-2n}p^{n\kappa}\le |(f^\beta)'(f^j(x))|\le|(f^{\beta-t})'(f^{j+t}(x))|\cdot ||f'||_{\xi_f}^t\le|(f^{\beta-t})'(f^{j+t}(x))|\Xi^t,$$
we have 
$$|(f^{\beta-t})'(f^{j+t}(x))|\ge\Xi^{-2n-\theta n/2}p^{n\kappa}.$$
Then from \eqref{equ:m}, either 
\begin{equation}\label{equ:1}
(\beta-t)(L_f(f^{j+t}(x))+\sigma)+T_0\ge(-2n-\frac{\theta n}{2})\log_p\Xi+n\kappa,
\end{equation}
or 
\begin{equation}\label{equ:1'}
-(\beta-t)\sigma^{-1}+T_0\ge(-2n-\frac{\theta n}{2})\log_p\Xi+n\kappa.
\end{equation}

If $L_f(x)+\sigma\ge 0$ for some $\sigma$, then 
 \begin{equation}\label{equ:0}
 L_f(x)\ge -\sigma,
 \end{equation}
which immediately gives a lower bound of $L_f(x)$. Now assume that $L_f(x)+\sigma<0$ for arbitrary $\sigma$. Note that $L_f(f^{j+t}(x))=L_f(x)$. 
Since  $\beta-t>\theta n/2$, from \eqref{equ:1},  we have
 \begin{equation}\label{equ:1+}
\frac{\theta n}{2}(L_f(x)+\sigma)+T_0\ge(-2n-\frac{\theta n}{2})\log_p\Xi+n\kappa,
\end{equation}
or from \eqref{equ:1'}, we have 
\begin{equation}\label{equ:1'+}
-\frac{\theta n}{2}\sigma^{-1}+T_0\ge(-2n-\frac{\theta n}{2})\log_p\Xi+n\kappa.
\end{equation}
Since $\kappa\le 0$ and $\sigma$ is arbitrary, we conclude that the inequality \eqref{equ:1'+} can not hold. 
So we are in \eqref{equ:1+}. It follows that 
\begin{equation}\label{equ:1++}
L_f(x)\ge(-\frac{4}{\theta}-1)\log_q\Xi+\frac{2\kappa}{\theta}.
\end{equation}

The inequalities \eqref{equ:s}, \eqref{equ:0} and \eqref{equ:1++} show $L_f(x)$ has a lower bound. Now we show that up to a measure zero set, $L_f(x)\ge\kappa$. If $\mathrm{Crit}(f)\cap J(f)=\emptyset$, then we are in case 1.1 for any arbitrary $s>0$. It follows from \eqref{equ:s} that $L_f(x)\ge\kappa$. Now let us assume $\mathrm{Crit}(f)\cap J(f)\not=\emptyset$. From the previous argument and the invariance of $\mu$, we have $\int_{J_I(f)}\log |f'|d\mu$ equals to $\int_{J_I(f)}L_f(x)d\mu$ and hence has a lower bound. The constant $C(f):=||f'||_{\xi_f}$ give an upper bound for $L_f(x)$ and hence for $\int_{J_I(f)}\log |f'|d\mu$. Thus $\log |f'|$ is $\mu$-integrable on $J_I(f)$. Then for any $c\in\mathrm{Crit}(f)\cap J(f)$ and any $s>0$, considering the integration of  $\log |f'|$ on $\mathsf{B}_{sn}(c)$, we have that for $n_0\gg 1$, the exists a constant $C>0$ such that 
\begin{align*}
-\infty&<\int_{\mathsf{B}_{sn_0}(c)}\log_p |f'|d\mu\le C\sum_{n\ge n_0}\int_{\mathsf{B}_{sn}(c)\setminus\mathsf{B}_{s(n+1)}(c)}\log_p|z-c|^{\deg_cf-1}d\mu \\
&\le C(\deg_cf-1)\sum_{n\ge n_0}\int_{\mathsf{B}_{sn}(c)\setminus\mathsf{B}_{s(n+1)}(c)}\log_p (R_f\Xi^{-sn})d\mu \\
&=C(\deg_cf-1)\mu(\mathsf{B}_{n_0})\log_pR-C(\deg_cf-1)s\log_p\Xi\sum_{n\ge n_0}n(\mu(\mathsf{B}_{sn}(c))-\mu(\mathsf{B}_{s{n+1}}(c)))\\
&=C(\deg_cf-1)\mu(\mathsf{B}_{n_0})\log_pR-C(\deg_cf-1)s\left(n_0\mu(\mathsf{B}_{sn_0}(c))+\sum_{n>n_0}\mu(\mathsf{B}_{sn}(c))\right)\log_q\Xi.
\end{align*}
It follows that $\sum_{n>n_0}\mu(\mathsf{B}_{sn}(c))$ is convergent. Hence $\sum_{n>n_0}\mu(f^{-n}(\mathsf{B}_{sn}(c)))$ is convergent since $\mu$ is $f$-invariant. We conclude that $\mu(f^{-n}(\mathsf{B}_{sn}(c)))\to 0$. Thus for $\mu$-almost $\xi\in\mathsf{P}^1$, $f^n(\xi)\not\in\mathsf{B}_{sn}(c))$. Since $\mu(J_I(f))>0$, for $\mu$-almost $y\in J_I(f)$, we have $f^n(y)\not\in\mathsf{B}_{sn}(c)$. Since $s>0$ is arbitrary, by the argument in the case 1.1, we obtain that $L_f(x)\ge\kappa$ for $\mu$-almost all points $x\in E(T_0)\cap J_I(f)$.  This completes the proof of statement $(1$).

\smallskip
For statement $(2)$, note that $||f'||_\xi\le ||f'||_{\xi_f}$ for any $\xi\preceq\xi_f$  by Lemma \ref{lem:obs}. Since $J(f)\cap\mathsf{H}^1\subset\mathsf{P}^1\setminus\mathsf{B}(\overrightarrow{\xi_f\infty})$, we have 
$$\int_{J(f)\cap\mathsf{H}^1}\log_p||f'||_\xi d\mu\le(\log_p||f'||_{\xi_f})\mu(J_I(f)\cap\mathsf{H}^1).$$
%Also observe that  $\int_{J(f)\cap\mathsf{H}^1}\log_p\mathrm{diam}(\xi) d\mu\le(\log_p\mathrm{diam}(\xi_f))\mu(J(f)\cap\mathsf{H}^1)<+\infty$. Now we claim that $\int_{J(f)\cap\mathsf{H}^1}\log_p\mathrm{diam}(\xi) d\mu$ has a lower bound. Suppose to the contrary that $\int_{J(f)\cap\mathsf{H}^1}\log_p\mathrm{diam}(\xi) d\mu=-\infty$. Then for any small $\epsilon>0$, there is a set $X_\epsilon\subset J(f)\cap\mathsf{H}^1$ such that $\mu(X_\epsilon)>0$ and each element in $X_\epsilon$ has diameter at most $\epsilon$. Consider the smallest $n\ge 0$ such that $f^{-n}(\xi_f)\cap X_\epsilon\not=\emptyset$. It follows that $n$ increases to $\infty$ as $\epsilon$ decreases to $0$. We may assume that $\epsilon>0$ sufficiently small so that for any $\xi_n\in f^{-n}(\xi_f)$, there is a unique $\xi_{n+1}\in f^{-(n+1)}(\xi_f)$ satisfying $\xi_{n+1}\prec\xi_n$. Passing to a subsequence if necessary, we conclude that there exists a sequence $\xi_n\in f^{-n}(\xi_f)$ with $\xi_{n+1}\prec\xi_n$ such

Since $\log_p\mathrm{diam}(\cdot)$ is $\mu$-integrable on $J(f)\cap\mathsf{H}^1$, applying Proposition \ref{prop:distortion} and the invariance of $\mu$, we conclude that 
\begin{align*}\label{equ:L=0}
\int_{J(f)\cap\mathsf{H}^1}\log_p||f'|| d\mu&\ge\kappa+\int_{J(f)\cap\mathsf{H}^1}\left(\log_p\mathrm{diam}(f(\xi))-\log_p\mathrm{diam}(\xi)\right)d\mu\\
%&=\kappa+\int_{\mathsf{H}^1}\left(\log_p\mathrm{diam}(f(\xi))-\log_p\mathrm{diam}(\xi)\right)d\mu\\
&=\kappa.
\end{align*}
Hence by \eqref{equ:L=} and the above inequalities, the statement (2) follows.

\smallskip

Now let us assume $\mu$ is ergodic. If $\mu(J_I(f))=1$, we have $L(f,\mu)=\int_{J_I(f)}\log_p|f'|d\mu\ge\kappa$ by statement $(1)$ and the Birkhoff Ergodic Theorem. If $\log_p\mathrm{diam}(\cdot)$ is $\mu$-integrable on $J(f)\cap\mathsf{H}^1$, by statements $(1)$ and $(2)$, we in fact have that $\log_p||f^\#||$ is $\mu$-integral, and then again by the Birkhoff Ergodic Theorem, we obtain the conclusion. 
\end{proof}

\begin{proof}[Proof of Corollary \ref{cor:bound}]
Since $f$ is tame, $p\nmid\deg_\xi f$ for any $\xi\in\mathsf{A}^1$. It follows that $\kappa=0$.  Thus Theorem \ref{thm:bound} implies the conclusion exception the equality in statement $2$. 

If $\mu(J(f)\cap\mathsf{H}^1)>0$ and $\log_p\mathrm{diam}(\cdot)$ is $\mu$-integrable on $J(f)\cap\mathsf{H}^1$, applying \eqref{equ:diam2} and the invariance of $\mu$, we conclude that 
\begin{equation*}
L(f,\mu)=\int_{\mathsf{P}^1}\log_p||f'|| d\mu=\int_{\mathsf{H}^1}\left(\log_p\mathrm{diam}(f(\xi))-\log_p\mathrm{diam}(\xi)\right)d\mu=0.
\end{equation*}
\end{proof}

\begin{proof}[Proof of Corollary \ref{cor:bound1}]
Since $f$ has no wandering Julia points in $\mathsf{H}^1$, by Lemma \ref{lem:repelling} and the invariance of $\mu$, we conclude that restricted on $J(f)\cap\mathsf{H}^1$, the measure $\mu$ is supported on periodic points. It follows that $\log_p\mathrm{diam}(\cdot)$ is $\mu$-integrable on $J(f)\cap\mathsf{H}^1$. Then the conclusion follows from Corollary \ref{cor:bound}.
\end{proof}

\section{Nonnegativeness of lower Lyapunov exponents}\label{sec:proof2}
In this section, we  prove Theorem \ref{thm:main} and Proposition \ref{prop:main}. Recall that $\mathcal{H}_c$ is the omitted tree of $f$ with respect to a critical point $c\in\mathrm{Crit}(f)$.

\begin{proof}[Proof of Theorem \ref{thm:main}]
For $n\ge 1$, set $w_n:=f^n(c)\in J(f)$ and consider the point $\xi_n\in\mathcal{H}_c$ such that $]w_n,\infty[\cap \mathcal{H}_c=[\xi_n,\infty[$. Then each $\xi_n$ is a vertex of $\mathcal{T}_f$ contained in the geometric sequence $\{G_\ell(w_n)\}_{\ell\ge 0}$ for $w_n$. Denoted by $\ell_n\ge 0$ the level such that $\xi_n=G_{\ell_n}(w_n)$. 

Note that $w_n\not\in\mathrm{Crit}(f)$ for all $n\ge 1$; for otherwise, since $c$ is the unique Julia critical point of $f$, there would exist $j\ge 1$ such that $w_j=c$, which would imply that $c\in\mathcal{F}(f)$. By Corollary \ref{coro:distance}, there exists $N:=N(n)\gg qn$ such that  for any $m\ge N$, 
\begin{equation}\label{equ:L-m}
\log_p|(f^n)'(w_1)|=\rho(\xi_f,G_m(w_1))-\rho(\xi_f,G_{m-qn}(w_n)).
\end{equation}
Observe that for sufficiently large $N$ with $N>\ell_n+qn$, we have $G_{m-qn}(w_n)\prec\xi_n$. It follows that
%$$\rho(\xi_f,G_N(w_1))=\rho(\xi_f,\xi_1)+\rho(\xi_1,G_N(w_1))$$
% and 
 \begin{equation}\label{equ:f-m}
 \rho(\xi_f,G_{n-qn}(w_n))=\rho(\xi_f,G_{\ell_n}(w_n))+\rho(G_{\ell_n}(w_n),G_{m-qn}(w_n)).
 \end{equation}
Since  $G_{\ell_n}(w_n)\in\mathcal{H}_c\setminus\ ]\xi_f,\infty[$, by Proposition \ref{lem:bound}, there exists $\alpha>0$ such that 
 \begin{equation}\label{equ:alpha}
 0\le \rho(\xi_f,G_{\ell_n}(w_n))\le\alpha.
 \end{equation} 
 Then from  \eqref{equ:L-m}, \eqref{equ:f-m} and \eqref{equ:alpha}, we conclude
 \begin{multline}\label{equ:der-bound}
 \left|\log_p|(f^n)'(w_1)|- \left(\rho(\xi_f,G_m(w_1))-\rho(G_{\ell_n}(w_n),G_{m-qn}(w_n))\right)\right|\\
 =\left|\rho(\xi_f,G_{\ell_n}(w_n))\right|\le\alpha.
 \end{multline}
 Moreover, applying \eqref{equ:L-difference}, we also obtain
 \begin{equation}\label{equ:L-m-}
 L_f^-(w_1)=\liminf_{n\to\infty}\frac{1}{n}\left(\rho(\xi_f,G_m(w_1))-\rho(G_{\ell_n}(w_n),G_{m-qn}(w_n))\right).
 \end{equation}
 
 Now we compare $\rho(\xi_f,G_m(w_1))$ and $\rho(\xi_n,G_{m-qn}(w_n))$. Consider the marked grid of $c$ and let $\gamma(j):=\gamma_{m,n}(j)$ be as in Section \ref{sec:grid}. Then for $\ell_n\le j\le m-qn-1$, since $]G_j(w_n),G_{j+1}(w_n)[\ \cap\mathcal{H}_c=\emptyset$, we have $]G_j(w_n),G_{j+1}(w_n)[\ \cap\ \mathcal{H}_c=\emptyset$, and hence 
 \begin{equation}\label{equ:not}
 ]G_{j+q\gamma(j)}(w_{n-\gamma(j)}), G_{j+1+q\gamma(j)}(w_{n-\gamma(j)})[\ \cap\ \mathcal{H}_c=\emptyset.
 \end{equation}
 Applying Lemma \ref{lem:deg constant}, Corollary \ref{coro:backward}, \eqref{equ:not}, and the tameness assumption, we obtain that for $\ell_n\le j\le m-qn-1$, 
 \begin{align*}
\rho(G_j(w_n),G_{j+1}(w_n))&=\deg_{c_0}f\cdot\rho(G_{j+q\gamma(j)}(w_{n-\gamma(j)}), G_{j+1+q\gamma(j)}(w_{n-\gamma(j)}))\\
&=\deg_{c_0}f\cdot\rho(G_{j+q\gamma(j)}(c), G_{j+1+q\gamma(j)}(c)).
\end{align*}
It follows that 
 \begin{align}\label{equ:<}
 \nonumber
 \rho(G_{\ell_n}(w_n),G_{m-qn}(w_n))&=\sum_{j=\ell_n}^{m-qn-1}\rho(G_j(w_n),G_{j+1}(w_n))\\ \nonumber
 &=\deg_{c}f\cdot\sum_{j=\ell_n}^{m-qn-1}\rho(G_{j+q\gamma(j)}(c), G_{j+1+q\gamma(j)}(c))\\ \nonumber
&=\sum_{j=\ell_n}^{m-qn-1}\rho(G_{j-1+q\gamma(j)}(w_1), G_{j+q\gamma(j)}(w_1))\\ 
 &\le\sum_{j=0}^{m-1}\rho(G_{j}(w_1), G_{j+1}(w_1))\\\nonumber
 &=\rho(\xi_f, G_m(w_1))
% &\le\sum_{j=0}^{m-(q-1)n-1}\rho(G_{j}(w_1), G_{j+1}(w_1))\\
 %&=\rho(\xi_f, G_m(w_1))-\sum_{j=m-(q-1)n}^{m}\rho(G_{j}(w_1), G_{j+1}(w_1))
 \end{align}
 The inequality \eqref{equ:<} follows from the strictly increasing of  $j+q\gamma(j)$ (see Section \ref{sec:grid}); indeed,  
 $$1\le j+q\gamma(j)\le m-qn-1+q\gamma_{m,n}(m-qn-1)\le  m-qn-1+qn=m-1.$$
 %Thus 
% \begin{equation}
% \rho(\xi_f, G_m(w_1))-\rho(G_{\ell_n}(w_n),G_{m-qn}(w_n))\ge \sum_{j=m-(q-1)n}^{m}\rho(G_{j}(w_1), G_{j+1}(w_1))
%\end{equation} 
 By \eqref{equ:der-bound}, \eqref{equ:L-m-} and  \eqref{equ:<}, we conclude that  $\log_p|(f^n)'(w_1)|>-\alpha$ and $L_f^-(w_1)\ge 0$.
 %Thus 
%$$\rho(\xi_n,G_{N-qn}(w_n))=\rho(G_{\ell_n}(w_n),G_{N-qn}(w_n))\le \rho(\xi_f, G_{N}(w_1)).$$
%Hence $\chi_-(f,w_1)\ge 0$.
\end{proof}

With a refined version of the inequality  \eqref{equ:<}, we have the following criteria for positive $L_f^-(f(c))$.

\begin{corollary}\label{cor:>}
Under the assumptions in Theorem \ref{thm:main}, consider the geometric sequence $\{G_n(c)\}_{n\ge 0}$. If $\liminf_{n\to\infty}\rho(G_n(c),G_{n+1}(c))>0$, then $L^-_f(f(c))>0$.
\end{corollary}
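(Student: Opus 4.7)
My plan is to strengthen the inequality \eqref{equ:<} used in the proof of Theorem \ref{thm:main}, by quantifying the gap between the two sides rather than merely bounding one by the other. The key observation is that since $j \mapsto j + q\gamma(j)$ is strictly increasing on $\{\ell_n, \ldots, m-qn-1\}$ (see Section \ref{sec:grid}), the sum on the left of \eqref{equ:<} involves only $m - qn - \ell_n$ of the $m$ consecutive edges $]G_j(w_1), G_{j+1}(w_1)[$, $j = 0, \ldots, m-1$, that appear on the right. Therefore at least $qn + \ell_n \ge qn$ such edges are \emph{omitted} from the sum on the left, and the gap
$$\rho(\xi_f, G_m(w_1)) - \rho(G_{\ell_n}(w_n), G_{m-qn}(w_n)) = \sum_{j \in \mathcal{M}_n} \rho(G_j(w_1), G_{j+1}(w_1))$$
where $\mathcal{M}_n \subset \{0, \ldots, m-1\}$ has cardinality at least $qn$.

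Next, I will translate the hypothesis $\liminf_{n\to\infty} \rho(G_n(c), G_{n+1}(c)) > 0$ into a corresponding positive lower bound $\liminf_{n\to\infty} \rho(G_n(w_1), G_{n+1}(w_1)) > 0$. Trucco's identity $f(G_{n+q}(c)) = G_n(f(c)) = G_n(w_1)$ (valid for $n \ge 0$) combined with the length formula $\rho(f(e)) = \deg_{\xi_1} f \cdot \rho(e)$ recorded after Lemma \ref{lem:deg constant} gives
$$\rho(G_n(w_1), G_{n+1}(w_1)) = \deg_{G_{n+q}(c)} f \cdot \rho(G_{n+q}(c), G_{n+q+1}(c)) \ge \rho(G_{n+q}(c), G_{n+q+1}(c)).$$
Thus there exist $\delta > 0$ and $j_0 \ge 0$ such that $\rho(G_j(w_1), G_{j+1}(w_1)) \ge \delta$ for all $j \ge j_0$.

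Combining these two facts, since at most $j_0$ of the indices in $\mathcal{M}_n$ can lie in $\{0, \ldots, j_0 - 1\}$, the remaining at least $qn - j_0$ indices contribute at least $\delta$ each, so
$$\rho(\xi_f, G_m(w_1)) - \rho(G_{\ell_n}(w_n), G_{m-qn}(w_n)) \ge (qn - j_0)\delta.$$
Dividing by $n$, taking $\liminf$ as $n \to \infty$, and appealing to formula \eqref{equ:L-m-} from the proof of Theorem \ref{thm:main}, we conclude $L_f^-(f(c)) \ge q\delta > 0$.

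The main obstacle is purely bookkeeping: verifying the count of omitted indices (and making sure the slight index shift between $c$ and $w_1$ does not spoil the $qn$ lower bound) and correctly invoking Trucco's identity on the geometric sequences. Once those are verified, the argument is a direct refinement of the computation already carried out for Theorem \ref{thm:main}.
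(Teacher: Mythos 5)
Your proof is correct and takes essentially the same route as the paper: refine \eqref{equ:<} by counting the omitted edges (linearly many in $n$) and bounding each from below, then divide by $n$ and invoke \eqref{equ:L-m-}. You make the paper's implicit transfer of the $\liminf$ hypothesis from $c$ to $w_1=f(c)$ explicit via Trucco's identity and the edge-length formula, which is a welcome clarification of a step the paper leaves unstated, and your count of at least $qn$ omitted terms is sharper than the paper's cruder ``at most $m-n$ terms.'' One tiny index nit: when applying $\rho(f(e))=\deg_{\xi_1}f\cdot\rho(e)$ to $e=\,]G_{n+q+1}(c),G_{n+q}(c)[$, the local degree should be taken at the \emph{smaller} endpoint $G_{n+q+1}(c)$ rather than $G_{n+q}(c)$; this is harmless since the local degree is always at least $1$, so the displayed inequality stands.
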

\begin{proof}
Denote by $A:=\liminf_{n\to\infty}\rho(G_n(c),G_{n+1}(c))$. In \eqref{equ:<}, the term 
$$\sum_{j=\ell_n}^{m-qn-1}\rho(G_{j-1+q\gamma(j)}(w_1), G_{j+q\gamma(j)}(w_1))$$ 
is a summation of at most $m-n$ terms. 
Thus there exists $\epsilon>0$ such that 
$$\sum_{j=0}^{m-1}\rho(G_{j}(w_1), G_{j+1}(w_1))-\sum_{j=\ell_n}^{m-qn-1}\rho(G_{j-1+q\gamma(j)}(w_1), G_{j+q\gamma(j)}(w_1))\ge n\epsilon A.$$
Then by \eqref{equ:L-m-}, 
we conclude $L_f^-(f(c))\ge \epsilon A>0$.
\end{proof}

\begin{proof}[Proof of Proposition \ref{prop:main}]
Since $c$ is not in the basin of an attracting cycle, by Lemma \ref{lem:nonwandering}, the critical point  $c$ is eventually mapped to an indifferent periodic domain or a wandering but not a strictly wandering domain. Since $f$ has finitely many critical points in $K$, there exists a smallest $\ell_0\ge 1$ such that forward orbit of the Fatou component containing $w_{\ell_0}$ is disjoint with $\mathrm{Crit}(f)$. To show $L_f(w_\ell)=0$ for any $\ell\ge\ell_0$, it suffices to show that $L_f(w_{\ell_0})=0$

Let $j\ge\ell_0$ be the smallest integer such that the boundary, denoted by $\xi\in\mathsf{H}^1$, of the Fatou component containing $w_j$ is a periodic point. Let $m\ge 1$ be the period of $\xi$ and set 
$$r_{\min}:=\min\{\mathrm{diam}(f^i(\xi)): 0\le i \le m-1\}\ \text{and}\ r_{\max}:=\max\{\mathrm{diam}(f^i(\xi)): 0\le i\le m-1\}.$$
Observing that the choice of $\ell_0$ implies that $f$ is one-to-one on the Fatou component containing $w_\ell$ for $\ell\ge\ell_0$,  by Lemma \ref{lem:basic}(2), we conclude that for all $n\ge 1$,
$$\frac{r_{\min}}{r_{\max}}\le |(f^n)'(w_j)|\le\frac{r_{\max}}{r_{\min}}.$$
It follows that
$$L_f(w_{\ell_0})=\lim_{n\to\infty}\frac{1}{n}\log_p|(f^n)'(w_{\ell_0})|=\lim_{n\to\infty}\frac{1}{n}\log_p|(f^{n-(j-\ell_0)})'(w_j)\cdot(f^{j-\ell_0})'(w_{\ell_0})|=0.$$
\end{proof}

%Propositions \ref{prop:Julia} and \ref{prop:Fatou} are crucial in our later argument. Proposition \ref{prop:Julia} asserts that %all the Julia points in $\mathsf{H}^1$ are of type II. Moreover, for any Fatou component $\Omega\subset\mathcal{F}(f)$ with $\Omega\not=\Omega_\infty(f)$, the boundary $\partial\Omega$ is a singleton that is eventually mapped to a repelling periodic cycle. Moreover, for any periodic cycle $\mathcal{O}\subset\mathcal{J}(f)\cap\mathsf{H}^1$, there exists at least one critical point $c\in\mathcal{F}(f)$ such that the boundary $\partial\Omega_c$ of the Fatou component $\Omega_c$ containing $c$ is a periodic point in $\mathcal{O}$. Proposition \ref{prop:Fatou} implies that for any wandering domain $\Omega\subset\mathcal{F}(f)$, the boundary $\partial\Omega$ is eventually mapped to one of the periodic cycles in $\mathcal{J}(f)\cap\mathsf{H}^1$.

\section{Examples}\label{sec:ex}

In this section, we give provide examples about the lower Lyapunov exponents at critical values. 

Inspired by \cite[Section 5]{Przytycki99}, in our first example, we show that there exists a tame quartic polynomial in $\mathbb{C}_p[z]$ having two Julia critical points and no critical relation such that the lower Lyapunov exponent at a critical value is $-\infty$

\begin{example}\label{ex:2}
Let $p\ge 5$ be a prime. Consider the quartic polynomials $f\in\mathbb{C}_p$ such that $f$ has a critical point whose critical value is a repelling fixed point. Then $f$ has the form of 
$$f(z)=a_4(z-c)^4+a_3(z-c)^3+a_2(z-c)^2+a_0$$
with $f(a_0)=a_0$ and $|f'(a_0)|>1$, where $a_i$ and $c$ are in $\mathbb{C}_p$. We can further set $f'(a_0)=p^{-1}$ for connivence. Then such polynomials form a subset $W$ of $\mathbb{C}_p^5$ satisfying two polynomial equations.

Observes that $c\in J(f)$. Then the backward orbit of $0$ is dense in $J(f)$ \cite[Proposition 5.23]{Benedetto19}. For any $0<r<1$, consider the open disk $D(0,r)\subset\mathbb{C}_p$ and set $S:=S_{r,f}=\overline{D}(0,r)\cap J(f)$. Now for $n\ge 1$, set
$$X_n:=\{z\in S: |(f^{n})'(z)|<p^{-p^n}\}.$$
Observes that $0\in\left(\cup_{i=0}^{n-1}f^{-i}(0)\right)\cap S\subset X_n$. For $n\ge 2$, denoted by $Y_n:=\cup_{i=0}^{n-2}f^{-i}(0)$ and set 
$$Z_n:=X_n\setminus Y_n.$$
It follows that $f^{-(n-1)}(0)\cap S\subset Z_n$.  For any fixed $N\ge 2$, we conclude that $\cup_{n\ge N}Z_n$ is dense in $S$. Moreover, $Z_n$ is open, so is $\cup_{n\ge N}Z_n$. 
 By Baire category theorem, the intersection $Z:=\bigcap_{N\ge 1}\bigcup_{n\ge N}Z_n$ is dense in $S$.  Pick a point $w\in Z$ and observe that $w$ in not in the backward orbit of $c$. Then we have 
 $$L_f^{-}(w)=\liminf_{n\to\infty}\frac{1}{n}\log_p|(f^n)'(w)|\le\liminf_{n\to\infty}\frac{-p^n}{n}=-\infty$$

Denote by $c'$ another critical point of $f$ with $c'\not=c$ and remember that $w$ is dependent on $f$. Now we show there are suitable $f$ and $w$ such that $f(c')=w$. Consider the map $F: W\to\mathbb{C}_p$, sending $f$ to $f(c')-c$. Then $F$ is analytic and there is $f_0\in W$ such that $f(c')-c=0$. It follows that there image $F(W)$ contains a neighborhood of $0$, say $D(0,r')$. Let the $r$ above satisfy $0<r<r_1$ and considering the corresponding $w-c\in D(0,r)$. Then there exists an $f\in W$ such that $F(f)=w-c$. Thus for such an $f$, we have $f(c')=w$, and hence $L_f^-(f(c'))=-\infty$.
\end{example}

In our next example, we give a cubic tame polynomial satisfying the assumptions in Theorem \ref{thm:main} and having lower Lyapunov exponent $0$ at the unique critical value in the Julia set. 

\begin{example}\label{ex:0}%\footnote{This example is provided by J. Kiwi. We appreciate Kiwi's courtesy.}
Recall that $\mathbb{L}$ is the completion of the Puiseux series over $\mathbb{C}$. Fix $\alpha\in\mathbb{L}$ with $|\alpha|>1$ and consider a cubic polynomial $f_{v}(z)=\alpha^2(z-1)^2(z-2)+v$ with $v\in\mathbb{L}[z]$. By \cite[Theorem 5.3(iii)]{Kiwi06}, for the parameters $v$ in the boundary of shift locus, the critical point $c=1$ of $f_v$ is contained in the filled Julia set $\mathcal{K}(f_v)$ and the marked grid $\mathbf{M}(c)=(M_{\ell,k}(c))$ is aperiodic. Now we consider a parameter $v\in\mathbb{L}$ such that $\mathbf{M}(c)$ satisfies the following Fibonacci property: $M_{\ell_n,k_n}(c)=1$ for $\ell_1=1, k_1=2,  k_2=3, \ell_{n+1}=\ell_n+k_n, k_{n+1}=k_n+k_{n-1}$ and all the other critical entries are determined by the admissible conditions in \cite[Proposition 4.5 and Definition 5.2]{Kiwi06}. The existence of such a parameter $v$ follows from \cite[Theorem 5.3(ii)]{Kiwi06}. For $n\ge 1$, considering the dynamical sequences $\{L_n(c)\}_{n\ge 0}$ of $c$ and  $\{L_n(v)\}_{n\ge 0}$ of $v$, since $f_v^{k_n-1}$ maps the closed disk in $\mathbb{L}$ corresponding to $L_{\ell_{n+1}-1}(v)$ bijectively to the closed disk in $\mathbb{L}$ corresponding to $L_{\ell_n}(c)=L_{\ell_{n+1}-k_n}(f^{k_n-1}(v))$, we conclude that 
\begin{align}\label{equ:length}
2\rho(L_{\ell_{n+1}}(c),\xi_{f_v})=\rho(L_{\ell_{n}}(c),\xi_{f_v})+\rho(\xi_{f_v},f^{k_n}(\xi_{f_v})),  \ \ \ \text{and}
\end{align}
\begin{align}\label{equ:der-lya}
\left|(f_v^{k_n-1})'(v)\right|=\frac{\mathrm{diam}(L_{\ell_n}(c))}{\mathrm{diam}(L_{\ell_{n+1}-1}(v))}.
\end{align}
It follows from \eqref{equ:length} that $\rho(L_{\ell_n}(c),\xi_{f_v})\to\infty$, as $n\to\infty$, and hence $c\in J(f_v)$. In what follows, we estimate $L^-_{f_v}(v)$ using \eqref{equ:der-lya}.

Note that $f_v$ maps $[L_{\ell_{n+1}}(c)), \xi_{f_v}[$ onto $[L_{\ell_{n+1}-1}(v),f_v(\xi_{f_v})[$ with local degree 2. Thus 
$$\log\frac{\mathrm{diam}(f_v(\xi_{f_v}))}{\mathrm{diam}(L_{\ell_{n+1}-1}(v))}=2\log\frac{\mathrm{diam}(\xi_{f_v})}{\mathrm{diam}(L_{\ell_{n+1}}(c))},$$
that is 
$$\log\mathrm{diam}(L_{\ell_{n+1}-1}(v))=2\log\mathrm{diam}(L_{\ell_{n+1}}(c))-2\log\mathrm{diam}(\xi_{f_v})+\log\mathrm{diam}(f_v(\xi_{f_v})).$$
Thus
we conclude that as $n\to\infty$, 
\begin{equation}\label{equ:est}
\frac{\log\left|(f_v^{k_n-1})'(v)\right|}{k_n-1}=\frac{1}{k_n-1}\left(2\log\frac{\mathrm{diam}(\xi_{f_v})}{\mathrm{diam}(L_{\ell_{n+1}}(c))}-\log\frac{\mathrm{diam}(\xi_{f_v})}{\mathrm{diam}(L_{\ell_n}(c))}\right)+o(1).
\end{equation}
For any $j\ge 2$, noting that 
$$\ell_{j+1}-\ell_{j}=k_j=k_{j-1}+k_{j-2}=(\ell_{j}-\ell_{j-1})+(\ell_{j-1}-\ell_{j-2})=\ell_{j}-\ell_{j-2}$$ 
and considering the map $f_v^{k_j}$, we have that 
$$2\rho(L_{\ell_{j+1}}(c),L_{\ell_j}(c))=\rho(L_{\ell_j}(c),L_{\ell_{j-2}}(c))=\rho(L_{\ell_j}(c),L_{\ell_{j-1}}(c))+\rho(L_{\ell_{n-1}}(c),L_{\ell_{j-2}}(c))$$
Then there exists $C>0$ such that for any $n\ge 1$,
\begin{equation}\label{equ:C}
1<\rho(L_{\ell_{n+1}}(c),L_{\ell_n}(c))=\frac{\mathrm{diam}(L_{\ell_n}(c))}{\mathrm{diam}(L_{\ell_{n+1}}(c))}<C.
\end{equation}
Thus  by \eqref{equ:est} and \eqref{equ:C}, as $n\to\infty$,
\begin{multline*}
\frac{\log\left|(f_v^{k_n-1})'(v)\right|}{k_n-1}=\frac{1}{k_n-1}\log\frac{1}{\mathrm{diam}(L_{\ell_{n+1}}(c))}+o(1)\\
=\frac{1}{k_n-1}\sum_{j=0}^{n}\log\frac{\mathrm{diam}(L_{\ell_j}(c))}{\mathrm{diam}(L_{\ell_{j+1}}(c))}+o(1).
\end{multline*}
By \eqref{equ:C} again, we obtain that as $n\to\infty$,
$$0\le \frac{\log\left|(f_v^{k_n-1})'(v)\right|}{k_n-1}\le\frac{Cn}{k_n-1}+o(1).$$
Thus 
$$\frac{\log\left|(f_v^{k_n-1})'(v)\right|}{k_n-1}\to 0.$$
Hence $L_{f_v}^-(v)\le 0$. By Theorem \ref{thm:main}, we in fact have $L_{f_v}^-(v)=0$.
\end{example}

\bibliographystyle{siam}
\bibliography{references}

\end{document}